\DeclareFontFamily{U}{mathx}{}
\DeclareFontShape{U}{mathx}{m}{n}{<-> mathx10}{}
\DeclareSymbolFont{mathx}{U}{mathx}{m}{n}
\DeclareMathAccent{\widehat}{0}{mathx}{"70}
\DeclareMathAccent{\widecheck}{0}{mathx}{"71}
\renewcommand{\check}{\widecheck}
\theoremstyle{plain}
\newtheorem{theorem}{Theorem}[section]
\newtheorem{lemma}[theorem]{Lemma}
\newtheorem{proposition}[theorem]{Proposition}
\theoremstyle{definition}
\theoremstyle{remark}
\newtheorem{remark}[theorem]{Remark}
\DeclareMathOperator{\imag}{Im}
\DeclareMathOperator{\real}{Re}
\DeclareMathOperator{\Ai}{Ai}
\numberwithin{equation}{section}
\title[Large-Time Asymptotics for KPI]{Large-Time Asymptotics for the Kadomtsev-Petviashvili I Equation}
\author{Samir Donmazov}
\author{Jiaqi Liu}
\author{Peter Perry}
\address[Donmazov]{Department of Mathematics, University of Kentucky, Lexington, Kentucky, U.S.A.}
\email{\url{S.Donmazov@uky.edu}}
\address[Liu]{School of Mathematics, University of Chinese Academy of Sciences. No.\ 19 Yuquan Road, Beijing China}
\email{\url{jqliu@ucas.ac.cn}}
\address[Perry]{Department of Mathematics, University of Kentucky, Lexington, Kentucky, U.S.A}
\email{\url{peter.perry@uky.edu}}
\date{\today}
\begin{document}

\begin{abstract}
    We prove large-time asymptotics for solutions of the KP I equation with small initial data. Our assumptions on the initial data rule out lump solutions but enable us to give a precise description of the radiation field at large times. Our analysis uses the inverse scattering method and involves large-time asymptotics for solutions to a non-local Riemann-Hilbert problem.
\end{abstract}

\maketitle

\tableofcontents

\newcommand{\calC}{\mathcal{C}}
\newcommand{\calR}{\mathcal{R}}
\newcommand{\calS}{\mathcal{S}}
\newcommand{\calT}{\mathcal{T}}
\newcommand{\R}{\mathbb{R}}
\newcommand{\C}{\mathbb{C}}
\newcommand{\wT}{\widetilde{T}}
\newcommand{\wS}{\widetilde{S}}

\newcommand{\oT}{\overline{T}}

\newcommand{\kbar}{\overline{k}}

\newcommand{\wf}{\widetilde{f}}
\newcommand{\tu}{\widetilde{u}}
\newcommand{\tmu}{\widetilde{\mu}}
\newcommand{\wlam}{\widehat{\lambda}}

\newcommand{\loc}{{\mathrm{loc}}}
\newcommand{\dotarg}{\, \cdot \,}

\newcommand{\eps}{\varepsilon}

\newcommand{\bfn}{\mathbf{n}}
\newcommand{\bfu}{\mathbf{u}}

\newcommand{\calF}{\mathcal{F}}

\newcommand{\diff}{\partial}

\newcommand{\Norm}[2]{\left\Vert #1 \right\Vert_{#2}}
\newcommand{\norm}[2][ ]{\left\Vert #2 \right\Vert_{#1}}
\newcommand{\bigO}[2][ ]{\mathcal{O}_{#1}\left( #2 \right)}
\newcommand{\smallO}[2][ ]{o_{#1}\left( #2 \right)}

\newcommand{\comment}[1]{\marginpar{\scriptsize{\color{purple} #1}}}


\section{Introduction}
The goal of this paper is to determine the large-time asymptotic behavior of solutions to the Kadomtsev-Petviashvili I equation on $\R^2$ with small initial data using inverse scattering methods. The Kadomtsev-Petviashvili (KP) equation is the dispersive equation
\begin{equation}
	\label{KP}
		\left\{
		\begin{aligned}
		 \left(u_t + u_{xxx} + 6uu_x\right)_x &= 3\lambda u_{yy} \\
		 u(0,x,y) &= u_0(x,y).
		\end{aligned}
		\right.
\end{equation}
for a function $u=u(t,x,y)$ describing long weakly dispersive waves in two space dimensions. The case $\lambda=1$ is the KP I equation, and the case $\lambda=-1$ is the KP II equation. The KP equation is integrable in the sense that a smooth function $u=u(t,x,y)$ solves the KP equation if and only if $u$ satisfies the comptability condition for the system
	\begin{align}
		\label{KP.Lax.L}
		\left(\sigma \diff_y + \diff_x^2 + u\right)\psi &= 0 \\
		\label{KP.Lax.M}
			\left(\diff_t + 4 \diff_x^3 + 6u \diff_x + 3\left(u_x - \sigma \diff_x^{-1} \diff_y u \right)\right)\psi &= 0
	\end{align}
		(where $\sigma=i$ for KP I and $\sigma=1$ for KP II)	.  The linear problem \eqref{KP.Lax.L}  defines ``scattering data'' which, by \eqref{KP.Lax.M} and the compatibility condition, evolves linearly under the flow defined by the KP equation \eqref{KP}. Inverse scattering for the KP I equation was studied by Zakharov and Manakov \cite{ZM1979}, Manakov \cite{Manakov81}, and Fokas-Ablowitz \cite{FA83-1}. A rigorous analysis of the inverse scattering method for KP I was given by Zhou \cite{Zhou90} and was further studied by Fokas-Sung \cite{FS92,FS99} and Sung \cite{Sung99}.
		
	To solve \eqref{KP} by inverse scattering, one computes time-zero scattering data from \eqref{KP.Lax.L} with $u=u_0$, evolves the scattering data linearly in time, and recovers $u(t,x,y)$ via an ``inverse scattering map'' from the time-evolved scattering data to the solution.
	
	The KP I and KP II equations have been extensively studied by PDE methods: we refer the reader to the chapter 5 of the monograph of Klein and Saut \cite{KS2021} for a comprehensive discussion of results on the KP equation via both PDE and inverse scattering approaches. We will confine our remarks on the PDE literature here to  global well-posedness results that we will use in our analysis of KP I and to preceding work on large-time asymptotics for KP I.
	

Ionescu, Kenig, and Tataru \cite[Theorem 1.1]{IKT2008} showed that the KP I equation is globally well-posed in the space
\begin{equation}
    \label{IKT:E1}
        \mathbf{E}_1(\R^2) = \{ \varphi: \R^2 \to \R: \norm[\mathbf{E}_1(\R^2)]{\varphi} < \infty \}
\end{equation}
where
\begin{equation}
    \label{IKT.E1.norm}
    \norm[\mathbf{E}_1]{\varphi } \coloneqq 
        \norm[L^2(\R^2)]{\varphi} + \norm[L^2(\R^2)]{\diff_x \varphi} + \norm[L^2(\R^2)]{\diff_x^{-1}\diff_y \varphi }.
\end{equation}
We will work in a subspace $\mathbf{E}_{1,w}$ of $\mathbf{E}_1$ with 
 more stringent decay and regularity hypotheses.  We define
\begin{equation}
	\label{IEw.space}
		\mathbf{E}_{1,w} = \{ u \in L^2(\R^2): \norm[\mathbf{E}_{1,w}]{u} < \infty \}
\end{equation}
where 
\begin{align}
	\label{IEw.norm}
	\norm[\mathbf{E}_{1,w}]{u}
        &=  \norm[\mathbf{H}^{4,5}(\R^2)]{u} + \norm[L^2(\R^2)]{\diff_x^{-1} u } + \norm[L^2_x L^{2,1}_y(\R^2) ]{\diff_x^{-1} \diff_y u }.
\end{align}
Here
$$ \mathbf{H}^{4,5}(\R^2) = \left\{ u \in L^2(\R^2): \norm[\mathbf{H}^{4,5}(\R^2)]{u} < \infty  \right\}$$
where
\begin{multline*}
   \norm[\mathbf{H}^{4,5}(\R^2)]{u} = \\\norm[L^{2,4}_x L^{2,5}_y(\R^2)]{ u} + \norm[L^2(\R^2)]{(1+y^2)^2(1-\diff_x^2)^2 u} + \norm[L^2(\R^2)]{(1-\diff_y^2)^2 u } 
\end{multline*}
Note that
\begin{equation}
	\label{IEw.toIE}
		\norm[\mathbf{E}_{1}]{u} \lesssim \norm[\mathbf{E}_{1,w}]{u}
\end{equation}
so that $\mathbf{E}_{1,w}$ is continuously embedded in $\mathbf{E}_1$. Hence, the KP I equation with initial data in $\mathbf{E}_{1,w}$ is still globally well-posed in $\mathbf{E}_1$. The stronger norms in  \eqref{IEw.norm} are needed to obtain estimates on the scattering data for $u$ and the solutions of the nonlocal Riemann-Hilbert problem--see particularly Proposition \ref{prop: mu-k} in \S \ref{sec:mupm}, Propositions \ref{prop:Tpm.L2} and \ref{prop:Tpm.kl.L2} in \S \ref{sec:Tpm}, and Remark \ref{rem:Tpm.kl.L2} in \S \ref{sec:Tpm}.

	A number of authors have studied asymptotics for the KP equations by PDE methods. Building on previous analysis of the generalized KP equation due to Hayashi, Naumkin and Saut \cite{HNS99},  Hayashi and Naumkin \cite{HN14} study solutions of KP I and KP II with $\diff_x^{-1}u_0 \in H^7 \cap H^{5,4}$ and $\norm[H^{5,4}]{\diff_x^{-1} u_0}$ small. They show that there is a unique solution $u$ with $\norm[L^\infty]{\diff_x u} \lesssim (1+t)^{-1}$, establish leading asymptotics for $u_x(t,x,y)$, and find a scattering asymptote for $u_x$ in $L^\infty$.  Harrop-Griffiths, Ifrim, and Tataru \cite{HID17} study solutions of KP I in a natural weighted space with Galilean-invariant norms: they obtain a similar $L^\infty$ bound on $u_x$ and also prove scattering of $u$ in $L^2$ for small data. Their approach is motivated in part by an analysis of the Hamiltonian flow associated to \eqref{KP} and its behavior in the ``propagation region'' which is essentially the region $a<0$ described in our main result, Theorem \ref{thm:main}, stated below (see their discussion in \S 1 of \cite{HID17}). Finally,  Mendez, M\~{u}noz, Poblete and Pozo \cite{MMPP24} study long term asymptotics for large data. So far as we are aware, however, there are no results in the PDE literature on the $L^\infty$ behavior of $u$ itself which show $t^{-1}$ decay in time.
	
	On the other hand, large-time asymptotics for solutions of both the KP I and KP II equation have also been studied via inverse scattering. Kiselev \cite{Kiselev01} derived leading asymptotics for the KP II equation under regularity and small data assumptions on the time zero scattering data for KP II. Manakov, Santini and Takhtajan \cite{MST79} derived long-time asymptotics for solutions of the KP I with smooth initial data. As we will explain (and as commented by Kiselev in his review paper \cite{Kiselev04}), the results of Manakov, Santini and Takhtajan are confined to only one of several space-time r\'{e}gimes for KP I. 
	
	To understand the dynamics of KP I and KP II, we first consider the linear Cauchy problem obtained from the KP equation by removing the nonlinear term:
	\begin{equation}
		\label{KP.lin}
		\left\{ 
		\begin{aligned}
			v_t + v_{xxx} 	&= 3\lambda \diff_x^{-1} v_{yy}, \\
			v(0,x,y) 		&= v_0(x,y).
		\end{aligned}
		\right.
	\end{equation}
	Introduce ``slow'' variables 
	\begin{equation}
		\label{xi.eta}
			\xi = \frac{x}{t}, \quad \eta=\frac{y}{t}. 	
	\end{equation}
	The Cauchy problem \eqref{KP.lin} admits the explicit solution
	\begin{align}
		\label{KP.lin.sol}
		v(t,x,y) &= 
			\frac{1}{2\pi}
				\int 
					e^{it(\xi p + \eta q + (p^3 + 3\lambda q^2/p))}
				\widehat{v_0}(p,q)
				\, dp \, dq
	\end{align}
    For the linear KP I equation, we may make the change of variables $p=l-k,q=-(l^2-k^2)$
	where the change of variables $p=l-k,q=-(l^2-k^2)$ will be useful in comparing  the solution of \eqref{KP.lin} with the solution of \eqref{KP} by inverse scattering. We find
    \begin{align}
        \label{KPI.lin.sol}
        v(t,x,y) &=
			\frac{1}{\pi} 
				\int
					e^{itS_0(k,l;\xi,\eta)}
					\widehat{v}_0(l-k,-(l^2-k^2)) |l-k|\, dl \, dk
			\intertext{where the phase function is}
            \label{S0.def.intro}
		S_0(k,l;\xi,\eta) &= 	\xi(l-k)-\eta(l^2-k^2) + 4(l^3-k^3)
    \end{align}
	A similar analysis for the linear KP II equation with the change of variables $p=-(k+\kbar)$, $q=i(k^2-\kbar^2)$ shows that
    \begin{align}
        \label{KPII.lin.sol}
        v(t,x,y) &=
            \frac{1}{\pi} \int_{\C} e^{itS(k,\kbar;\xi,\eta)}
                i(k+\kbar)\widehat{v}_0(-(k+\kbar),i(k^2-\kbar^2)) dk \wedge d\kbar 
    \end{align}
    where
    $$ S(k,\kbar; \xi,\eta) -i(k+\kbar)\xi -i(k^2-\kbar^2)\eta + 4(k^3 + \kbar^3).$$
    These respective phase functions have:
	\begin{enumerate}[(i)]
		\item	Nondegenerate critical points if 	$-\xi + \lambda \eta^2/12 >0$, a region where we might expect $t^{-1}$ decay of the solution,
		\item	No critical points if $-\xi + \lambda \eta^2/12 < 0$, where we might expect rapid decay of the solution as $t \to \infty$, and
		\item	Degenerate critical points if $-\xi + \lambda \eta^2/12 =0$, where we might expect a transition from $\bigO{t^{-1}}$ behavior to rapid decay of the solution. 
	\end{enumerate}
	Thus, in the $(\xi,\eta)$ plane, there are three distinct regimes of asymptotic behavior, shown here for $\lambda =1$ (the linear KP I equation):
	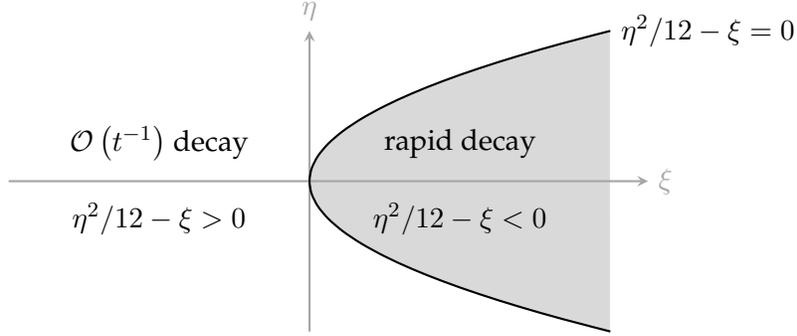
\begin{figure}[H]
		\caption{Large-Time Decay Regions for the Linear KP I Equation}
		\bigskip
		\begin{tikzpicture}
		
			\draw[white,fill=gray!30]
				plot[domain=-2:2,variable=\eta,smooth]
					({\eta*\eta},{\eta}) -- (4,-2);
					
			\draw[thick,gray!70,->,>=stealth]
				(-4,0) -- (4.5,0) 
				node[anchor=west] {$\xi$};
			\draw[thick,gray!70,->,>=stealth]
				(0,-2) -- (0,2)
				node[anchor=south] {$\eta$};
				
			\draw[thick,black]
				plot[domain=-2:2,variable=\eta,smooth]
					({\eta*\eta},{\eta})
					node[anchor=west] {$\eta^2/12 - \xi=0$};
			
			\node at (2,0.5) {rapid decay};
			\node at (-2,0.5) {$\bigO{t^{-1}}$ decay};
			
			\node at (2,-0.5) {$\eta^2/12 - \xi < 0$};
			\node at (-2,-0.5) {$\eta^2/12 - \xi > 0$};
		\end{tikzpicture}
	\end{figure}

	A similar trichotomy of asymptotic behaviors occurs in the analysis of the KP I and KP II equations by inverse scattering methods. Kiselev \cite{Kiselev01} (see also \cite[\S 3.3]{Kiselev04} studied solutions of the KP II equation with regularity and smallness conditions on the scattering data. For the KP II equation, setting
		$$ a = -\eta^2 -12\xi, \quad \text{(KP II)}$$
		Kiselev proved that if $u(t,x,y)$ is a solution to KP II satisfying the regularity and smallness conditions, then $u$ has the following asymptotic behaviors:
		\begin{enumerate}[(i)]
		\item	For $a t^{\frac13} \gg 1$, setting $a=r^2$,  
		$$			u(t,x,y) \underset{t \to \infty}{\sim}
				-2t^{-1} 
				\real 
					\left[
						e^{-11itr}
						\frac{\pi}{12ir} 
						f\left(
							\frac{r}{2} + \frac{i\eta}{12} 
						\right) 
					\right]
				+ \smallO{t^{-1}}
		$$
		\item	For $a t^{\frac23} \ll -1$, 
		$$ u(t,x,y) = \smallO{t^{-1}}.$$
		\item	For $|a| \ll 1$, 
		\begin{align*}
			u(t,x,y) 
				&= 8it^{-1} \sqrt{\pi} f(i\eta/12) F(z) + \smallO{t^{-1}}
		\intertext{where}
			z	&=	8t^\frac23
					\left(
						\frac{\eta^2}{12}+\xi 
					\right),\\
		\intertext{and}
			F(z)	&= 
				\int_0^\infty 
					\sqrt{p_1}\cos(8p_1^3-zp_1) 
				\, dp_1 +
				\int_0^\infty 
					\sqrt{p_2}\sin(8p_1^3-zp_2) 
				\, dp_2
		\end{align*}
	\end{enumerate}
	Here $f:\C \to \C$ is scattering data associated to the initial condition. 
	
	On the other hand, Manakov, Santini and Takhtajan \cite{MST79} studied asymptotics and scattering for solutions of the KP I equation under small data and smoothess assumptions. In this case, let
	\begin{equation}
		\label{KPI.a.def.intro}
			a = \frac{1}{12} \left(\xi - \frac{\eta^2}{12}\right). \quad \text{(KP I)}
	\end{equation}
	Formulating the inverse scattering theory for KP I using the Gelfand-Levitan approach (see Zakharov-Shabat \cite{ZS:1974}, Zakharov-Manakov \cite{ZM1979}, and Manakov \cite{Manakov81}), these authors computed long-time asymptotics of $u(t,x,y)$ in the region $a<0$. Using stationary phase methods and an ansatz for the Gelfand-Levitan integral equation, these authors derived an asymptotic form
	\begin{equation}
		\label{KPI.MST}	
			u(t,x,y) \underset{t \to \pm \infty}{\sim}
		\frac{8r}{t}\real \left(f^\pm(\xi,\eta) e^{16itr^3} \right) 
	\end{equation}
	where $a=-r^2 < 0$  and $f^\pm$ are determined from the asymptotics of solutions to the Gelfand-Levitan equation. 
	
	Our goals in this paper are, first of all, to place the results of \cite{MST79} on a rigorous footing and, secondly, to obtain asymptotics in the other spatial regions. Rather than pursuing the Gelfand-Levitan method used in \cite{MST79}, we will use Zhou's \cite{Zhou90} formulation of inverse scattering via a nonlocal Riemann-Hilbert problem. To our knowledge, ours is the first result which obtains asymptotics for an integrable dispersive PDE through a rigorous analysis of a nonlocal Riemann-Hilbert problem.
	
	To state our results we outline some of the main ideas of Zhou's theory (see also the papers of Fokas and Sung \cite{FS99} and Sung \cite{Sung99} where the nonlocal Riemann-Hilbert problem is used to construct solutions of KP I by the inverse scattering method). Denote by $u(x,y)$ the initial data for \eqref{KP} with $\lambda=1$. Let $\tu$ be the partial Fourier transform
	\begin{equation}
		\label{tu.def}
			\tu(l;y) = (2\pi)^{-\frac12} 
				\int e^{-ilx} u(x,y) \, dx
	\end{equation} 
	and assume that $\tu \in L^1(\R^2)$ with
	\begin{equation}
		\label{u.C1}
			c \coloneqq \frac{\norm[L^1(\R^2)]{\tu}}{\sqrt{2\pi}} < 1	
	\end{equation}
(this is a small-data condition that rules out soliton solutions and implies the existence of scattering solutions analytic in $\C \setminus \R$ needed to define the scattering data: see Segur \cite{Segur82} and Zhou \cite{Zhou90}). The direct scattering map $\calS$ takes $u$ to scattering data $T^\pm(k,l)$ where $T^\pm(k,l)=0$ for $\pm(l-k)<0$. The linearization of $\calS$ at zero potential is
\begin{equation}
	\label{KPI.S.lin}
		\calS_0(u)(k,l) = -\frac{i}{2\pi} \widehat{u}(l-k,-(l^2-k^2)).
\end{equation}

For the scattering map to have range in $L^2(\R^2)$, a necessary condition is that the linearization \eqref{KPI.S.lin} have range in $L^2(\R^2)$. The right-hand side of \eqref{KPI.S.lin} lies in $L^2(\R^2)$ provided that
\begin{equation}
	\label{u.C2.ancest}
		\norm[L^2(\R^2,|p|^{-1} \, dp \, dq)]{\widehat{u}} < \infty 
\end{equation}
as may easily be computed using the change of variables $p=l-k$, $q=-(l^2-k^2)$ to compute the $L^2$-norm of $\calS_0(u)$. By the Plancherel theorem,  condition \eqref{u.C2.ancest} is equivalent to
\begin{equation}
	\label{u.C2.pre}
		\norm[L^2_y L^{2,-1}_l]{\tu} < \infty
\end{equation}
where, here and in what follows,
\begin{equation}
	\label{L2yl.def}
	\norm[L^2_y L^{2,-1}_l]{f}
		=	\left( \int |f(l,y)|^2 |l|^{-1} \, dy \, dl \right)^\frac12.
\end{equation}
To obtain estimates on the scattering data we will also assume
\begin{equation}
	\label{u.C2}
	\norm[L^2_y L^{2,-1}_l]{\tu} < \frac{1-c}{4},
\end{equation}
and
\begin{equation}
    \label{u.C2a}
        \widetilde{c} \coloneqq \frac{\norm[L^{1,1}_l L^1_y]{\tu}}{\sqrt{2\pi} } < 1,
\end{equation}
(for \eqref{u.C2}, see Lemma \ref{lemma:tmu.exist} and the discussion following \eqref{gku.delta.X}; for \eqref{u.C2a}, see \eqref{tilde.c.cond} and the accompanying discussion).
Finally, we will assume \begin{equation}
	\label{u.C3}	
	u \in \mathbf{E}_{1,w}(\R^2)
\end{equation}
where the space $\mathbf{E}_{1,w}$ was defined in \eqref{IEw.space} and \eqref{IEw.norm}.
Zhou's reconstruction formula for $u(t,x,y)$ (see \cite[eq.\ (4.27)]{Zhou90})\footnote{In our formulas, we have included the time evolution of the scattering data and the time-dependence of the solution to the nonlocal Riemann-Hilbert problem. The scattering data evolves as
$$T^\pm(k,l,t)=T^\pm(k,l) e^{4it(l^3-k^3)}$$} involves 
the scattering data $T^\pm$ together with the solution $\mu^l=\mu^l(k,x;y,t)$ of a nonlocal Riemann Hilbert problem determined by the scattering data $T^\pm(k,l)$. 
In what follows we will sometimes make the change of variables
\begin{equation}
	\label{kl.shift.pre}
		(k,l) \to \left( \frac{\eta}{12} + k, \frac{\eta}{12} + l \right)	,
\end{equation}
and write
\begin{equation}
	\label{wT.def.pre}
		\wT^\pm(k,l) = T^\pm \left(k+\frac{\eta}{12}, l+\frac{\eta}{12} \right)	
\end{equation}
to shift critical points of the phase function to $(\pm r, \pm r)$. 

Let
$$ f(k,l) = T^+(k,l) + T^-(k,l). $$
The reconstruction formula is
\begin{align}
	\label{u.recon.pre}
		u(t,x,y) 	&=	u_1(t,x,y) + u_2(t,x,y)
	\intertext{where}
	\label{u1.recon}
		u_1(t,x,y)	&= 	
			\frac{1}{\pi} 
				\int e^{itS_0(k,l;\xi,\eta)} 
					i(l-k)	f(k,l) \, dl \, dk,
	\intertext{and}
	\label{u2.recon}
		u_2(t,x,y)	&=	\frac{1}{\pi}
			\int e^{itS_0(k,l;\xi,\eta)}
				i(l-k) f(k,l)
				\left( \mu^l(l,x;y,t) - 1 \right)
				\, dl \, dk \\
			&\quad + 	\frac{1}{\pi}
			\int e^{itS_0(k,l;\xi,\eta)}
				\frac{\diff \mu^l}{\diff x}(l,x;y,t)
				\, dl \, dk
			\nonumber
\end{align}
where $\mu^l$ is the solution of a nonlocal Riemann-Hilbert problem determined by the (time-evolved) scattering data.

We will refer to $u_1$ as the ``local'' term in the reconstruction formula and $u_2$ as the ``nonlocal'' term in the reconstruction formula. The nonlocal Riemann-Hilbert problem for $\mu^l$ is the integral equation
\begin{equation}
	\label{mul.RHP.pre}
		\mu^l	= 1	+	\calC_T \mu^l	
\end{equation}
for $\mu^l(\dotarg,x;y,t) -1 \in L^2_k(\R^2)$
where
\begin{align}
	\label{CT.def.pre}
	\calC_T 		
		&= C_+ \calT^- + C_-\calT^+,\\
	\label{CTpm.def}
	(\calT^\pm f)(k)	&=	\int e^{itS_0(k,l;\xi,\eta)}
							T^\pm(k,l) f(l) \, dl,
\end{align}
and $C_\pm: L^2_k(\R) \to L^2_k(\R)$ are Cauchy projections. An important part of our analysis is determining long-time asymptotic behavior of $\mu^l(k,x;y,t)$ as $t \to \infty$. We study the nonlocal Riemann-Hilbert problem in detail in \S \ref{subsec:nonlocal}.

Recall that, for the KP I equation, we define
$$a = \frac{1}{12} \left(\xi - \frac{\eta^2}{12}\right)$$
where $\xi=x/t$, $\eta=y/t$. We will prove:

\begin{theorem}
	\label{thm:main}
	Fix $\delta>0$. Suppose that the Cauchy data for the problem
	\begin{equation}
		\label{KPI}
		\left\{
			\begin{aligned}
			u_t + 6uu_x + u_{xxx} &= 3\diff_x^{-1} u_{yy},\\
				u(0,x,y) &= u_0(x,y)
				\end{aligned}
			\right.
	\end{equation}
	satisfy conditions \eqref{u.C1}, \eqref{u.C2}, \eqref{u.C2a}, and \eqref{u.C3}. The following asymptotics hold as $t \to \infty$:
    
            \begin{equation}
                \label{KPI.u.asy}
                u(t,x,y) \underset{t \to \infty}{\sim}
                \begin{cases}
                    \smallO{t^{-1}},    &   a>\delta > 0,\\
                    \\
                    \bigO{t^{-\frac23}}, & |a| < \delta, \\
                    \\
                    \bigO{t^{-1}}, &a < -\delta < 0.
                \end{cases} 
            \end{equation}
	\end{theorem}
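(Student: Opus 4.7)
The plan is to treat the local term $u_1$ in \eqref{u1.recon} and the nonlocal term $u_2$ in \eqref{u2.recon} separately, using stationary-phase and integration-by-parts techniques for $u_1$, and the nonlocal Riemann--Hilbert problem for $\mu^l$ together with a Neumann expansion for $u_2$. The source of the trichotomy in \eqref{KPI.u.asy} is the critical-point structure of the phase $S_0(k,l;\xi,\eta)=\xi(l-k)-\eta(l^2-k^2)+4(l^3-k^3)$. The conditions $\diff_k S_0=0$ and $\diff_l S_0=0$ decouple into $12k^2-2\eta k+\xi=0$ and $12l^2-2\eta l+\xi=0$, each with discriminant $-576a$. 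Hence $S_0$ has four nondegenerate real critical points at $(\pm r+\eta/12,\pm r+\eta/12)$ with $r=\sqrt{-a}$ when $a<0$, no real critical points when $a>0$, and coalescing degenerate critical points when $a=0$. This matches the three regions in \eqref{KPI.u.asy}, and the shift \eqref{kl.shift.pre} recenters the analysis at the critical set.

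For $u_1$, I would analyze each region in turn. When $a>\delta$, the gradient $\nabla S_0$ is bounded below on the relevant support, so repeated integration by parts against $\nabla S_0/(it|\nabla S_0|^2)$ exchanges each factor of $t^{-1}$ for a derivative of $(l-k)f(k,l)$; the smoothness and decay of the scattering data furnished by $u\in\mathbf{E}_{1,w}$ and Propositions~\ref{prop:Tpm.L2}, \ref{prop:Tpm.kl.L2} permit enough iterations to beat any power of $t$, yielding $\smallO{t^{-1}}$. When $a<-\delta$, two-dimensional stationary phase at the four nondegenerate critical points produces a $\bigO{t^{-1}}$ contribution whose leading term reproduces the Manakov--Santini--Takhtajan formula \eqref{KPI.MST}. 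When $|a|<\delta$, the critical points coalesce; after the shift \eqref{kl.shift.pre} and the rescaling $(k,l)\mapsto t^{-1/3}(k,l)$, the phase reduces uniformly in $a$ to a product of Airy cubics, and classical Airy asymptotics yield the $\bigO{t^{-2/3}}$ bound.

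For $u_2$, I would first use the smallness conditions \eqref{u.C1}, \eqref{u.C2}, \eqref{u.C2a} to make $\calC_T=C_+\calT^-+C_-\calT^+$ a contraction on $L^2_k(\R)$, so that $\mu^l-1=(I-\calC_T)^{-1}\calC_T 1$ is given by a convergent Neumann series of iterated oscillatory integrals with the same phase $S_0$ and kernels $T^\pm$. Consequently the same trichotomy applies to $\mu^l-1$ and to $\diff_x\mu^l$. Substituting the Neumann expansion into \eqref{u2.recon} and applying the oscillatory integral estimates above to the nested integrals, the nonlocal term $u_2$ inherits in each region a decay rate at least as good as that of $u_1$; combining the two contributions produces \eqref{KPI.u.asy}.

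The main obstacle will be the transitional region $|a|<\delta$, where the phase has degenerate critical points and ordinary stationary phase fails. There, both $u_1$ and $u_2$ require uniform Airy-type asymptotics, and for $u_2$ these uniform estimates must be propagated through the Neumann expansion in the presence of the unbounded Cauchy projections $C_\pm$; this forces the use of the full weighted space $\mathbf{E}_{1,w}$ and of the $L^2_y L^{2,-1}_l$ smallness built into \eqref{u.C2}, together with the scattering-data estimates of Proposition~\ref{prop: mu-k}. A secondary difficulty is upgrading $\bigO{t^{-1}}$ to $\smallO{t^{-1}}$ in the region $a>\delta$, which demands sufficient regularity of $f(k,l)=T^+(k,l)+T^-(k,l)$ to iterate integration by parts; this in turn relies on Proposition~\ref{prop: mu-k} and the regularity built into $\mathbf{E}_{1,w}$ via the $\mathbf{H}^{4,5}$ norm.
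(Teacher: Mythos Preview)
Your overall architecture—split $u=u_1+u_2$, classify by the critical-point structure of $S_0$, use stationary-phase type arguments for $u_1$, and control $u_2$ through the solvability of the nonlocal Riemann--Hilbert problem—is exactly what the paper does. But two technical points in your plan would not go through as stated and are precisely where the paper's work lies.

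\medskip

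\textbf{The Heaviside cutoff obstructs direct two-dimensional stationary phase.} The scattering data carry the factors $H(\pm(l-k))$, so the amplitude $(l-k)f(k,l)$ is only piecewise smooth across the diagonal $l=k$. Two of the four critical points, namely $(\pm r+\eta/12,\pm r+\eta/12)$, sit \emph{on} this diagonal. At these points the amplitude vanishes (so the leading stationary-phase term is zero), but the remainder estimates in two-dimensional stationary phase require derivatives of the amplitude that do not exist across $l=k$. A naive cutoff-and-balance argument near those points gives only $O(t^{-3/4})$, not $O(t^{-1})$. The paper avoids this entirely: it does not apply two-dimensional stationary phase. Instead it writes the $(k,l)$-integral as $\int f(k)g(k)\,dk$ with $g(k)=\int_k^\infty e^{itS(l)}(\cdots)\,dl$, passes to Fourier transforms in $k$, and reduces everything to one-dimensional ``half-Airy'' integrals $\int_k^\infty e^{itS(l)-i\xi l}\,dl$ for which uniform $t^{-1/2}$ (nondegenerate case) and $t^{-1/3}$ (degenerate case) bounds are proved in Appendix~\ref{App:Airy}. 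This device is what absorbs the Heaviside cutoff, and your plan has no substitute for it.

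\medskip

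\textbf{``Beat any power of $t$'' is too strong.} For $a>\delta$ you propose repeated integration by parts to get faster-than-polynomial decay. This fails for two reasons: the space $\mathbf{E}_{1,w}$ only controls finitely many derivatives of $T^\pm$, and each integration by parts across $l=k$ generates boundary terms (after the first, which is killed by the factor $l-k$). The paper does a \emph{single} integration by parts in $l$, shows the resulting amplitude lies in $L^1(\R^2)$ using \eqref{Tpm.est01}--\eqref{Tpm.est03}, and then invokes a Riemann--Lebesgue density argument to upgrade $O(t^{-1})$ to $o(t^{-1})$. That is all the regularity allows, and all the theorem claims.

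\medskip

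For $u_2$, your Neumann-series picture is morally correct, but the paper does not substitute the series and estimate iterated integrals. It bounds $\norm[L^2_l]{\mu^l-1}$ and $\norm[L^2_l]{\partial_x\mu^l}$ directly via $\norm[L^2_k]{\calC_T(1)}$ and $\norm[L^2_k]{\calC_{\partial T/\partial x}(1)}$ (Lemmas~\ref{lemma:CT.1}, \ref{lemma:CdTdX.1}), each of which is again a one-dimensional half-Airy estimate, and then pairs these with the analogous bounds \eqref{norm.I1.k}--\eqref{norm.I2.k} on the $l$-integral of the reconstruction formula via Cauchy--Schwarz in $k$.
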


\begin{proof}
	A direct consequence of Theorems \ref{thm:u1} and \ref{thm:u2} in \S \ref{sec:asy}.	
\end{proof}

\begin{remark}
    For the case of $a < -\delta < 0$, we can obtain an asymptotic formula for $u_1(t,x,y)$ that parallels the result of Manakov, Santini, and Takhtajan. We are not currently able to obtain explicit leading asymptotics for $u_2(t,x,y)$ in this case, so our best result overall is the $\bigO{t^{-1}}$ estimate.
\end{remark}

	
Here is a sketch of the contents of this paper. In \S \ref{sec:mupm}, we study the map $u \mapsto \mu^\pm$ and obtain estimates on the solutions $\mu^\pm$ and their derivatives needed to study the scattering data $T^\pm$. In \S \ref{sec:NLRHP}, we review Zhou's construction of the scattering data and his formulation of the nonlocal Riemann-Hilbert problem that lies at the heart of our analysis. 
In \S \ref{sec:Tpm}, we study the direct scattering map $u \mapsto T^\pm$ and obtain estimates on $T^\pm$ and its derivatives that are needed to solve the nonlocal Riemann-Hilbert problem and derive long-time asymptotic behavior of solutions. Finally, in \S \ref{sec:asy}, we give the proof of Theorem \ref{thm:main}.

\subsection{Acknowledgements}

We are grateful to Oleg Kiselev for helpful correspondence. Jiaqi Liu thanks the University of Kentucky for hospitality during part of the time this work was done.   Peter Perry thanks the Department of Mathematical Sciences at Tsinghua University for hospitality during the concluding stages of our work. This work was supported by a grant from the Simons Foundation (359431,  PAP).

\subsection{Notation}

We normalize the Fourier transform of functions $f:\R^n \to \C$ as follows:
\begin{align*}
	\widehat{f}(\xi) &= (2\pi)^{-n/2} \int_{\R^n} f(x) e^{-i\xi \cdot x} \, dx,\\
	\widecheck{g}(x) &= (2\pi)^{-n/2} \int_{\R^n} g(\xi) e^{i\xi \cdot x} \, d\xi.	
\end{align*}

We will use weighted spaces and Sobolev spaces in one and two variables. We denote by $L^{p,s}$ the space of measurable functions of one variable with
	$$ \norm[L^{p,s}]{f} \coloneqq \left(\int (1+x^2)^{sp/2} |f(x)|^p \, dx \right)^\frac12 $$
	finite, and by $H^{s}$ the space of functions of one variable with
	$$ \norm[H^s]{f} 
		\coloneqq 
			\left( 
				\int 
					(1+|\xi|^2)^s |\widehat{f}(\xi)|^2 
				\, d\xi 
			\right)^\frac12$$ 
	finite.
	Notations such as $L^{p,s}_x L^{q,\sigma}_y$ refer to the space of measurable functions of two variables with
	$$ \norm[L^{p,s}_x L^{q,\sigma}_y]{f} 
		\coloneqq 
			\left(
				\int (1+s^2)^{sp/2} 
					\left| 
						\int 
							(1+y^2)^{\sigma q/2}|f(x,y)|^q 
						\, dy 
					\right|^{p/q} 
				\, dx 
			\right)^{1/p} $$
	finite. We reserve the special notation $L^2_y L^{2,-1}_l$ for the space of measurable functions $g=g(y,l)$ with
	$$\norm[L^2_y L^{2,-1}_l]{g} \coloneqq \left( \int |g(y,l)|^2 |l|^{-1} \, dl \, dy \right)^\frac12 $$
	finite. 
	
\medskip

We will also use the following notation.

\medskip

\begin{itemize}
    \setlength{\itemsep}{5pt}
    
    \item[$\mu^\pm(k,x;y)$] Analytic scattering solutions associated to initial data $u_0$ (see \eqref{mupm.eqn})
    \item[$\tmu^\pm(k,l;y)$] Distribution Fourier transforms of $\mu^\pm$ in the $x$ variable (see \eqref{tmupm.def})
    \item[$\tmu^\pm_\sharp(k,l;y)$] The function $\tmu^\pm - \sqrt{2\pi}\delta$    (see \eqref{tmupm.sharp.def})
    \item[$\mu^l(k,x;y,t)$] Solution to the nonlocal Riemann-Hilbert problem (see \eqref{mul.RHP.pre})
    
    
    \item[$a$] The quantity $\xi/12 - \eta^2/144$ (see \eqref{KPI.a.def.intro} and  \eqref{a.def})
    \item[$c$]		The ratio $\norm[L^1]{\tu}/\sqrt{2\pi}$ (see \eqref{c.def})
    \item[$\tilde{c}$] The ratio $\norm[L^{1,1}_l L^1_y]{\widetilde{u}}/{\sqrt{2\pi}}$ (see \eqref{u.C2a})
    \item[$\tu$] The partial Fourier transform of $u$ in the $x$ variable (see \eqref{tu.def})
    

     \item[$C_\pm$] Cauchy projectors $C_\pm:L^2(\R) \to L^2(\R)$    

     \item[$C_{T_{x,y}}$] Integral operators in the nonlocal Riemann-Hilbert problem for the ``left'' solution $\mu^l$ at time $t=0$ (see \eqref{CT.left.time0})
     
    \item[$C_{R_{x,y}}$] Integral operators in the nonlocal Riemann-Hilbert problem for the ``right'' solution $\mu^r$ at time $t=0$ (see \eqref{CT.right.time0})
    
    \item[$S_0(k,l;\xi,\eta)$] Phase function for the reconstruction formula (see \eqref{S0.def.intro} and \eqref{S0.def})
    
    \item[$S(k,l;a)$] Phase function for the reconstruction formula in shifted coordinates \eqref{kl.shift.pre} (see \eqref{S.def})

    \item[$T^\pm$] Scattering data for $u_0$ (see \eqref{Tpm})
    \item[$T_1$] Linear part of scattering data (see \eqref{Tpm.split} and \eqref{T1.def})
    \item[$T^\pm_2$] Nonlinear part of scattering data (see \eqref{Tpm.split} and \eqref{T2.def}
    \item[$\wT^\pm$] Scattering data in shifted coordinates \eqref{kl.shift.pre}  (see \eqref{wT.def.pre} and \eqref{wT.def})

    \item[$X$] The space $L^\infty_{y}(L^2_k L^2_l)$ (see \eqref{space.X})
    \item[$Y$]  The space $ L^\infty_{y,k}(L^1_l)$ (see \eqref{space.Y})

    \item[$\calT^\pm_{x,y}$] Integral operators in the nonlocal Riemann-Hilbert problem at time $t=0$ (see \eqref{calTpm.op.def})
    
    \item[$\calT^\pm$] Integral operators in the $t=0$ nonlocal Riemann-Hilbert problem (see \eqref{calTpm.op.def}) or the $t>0$ nonlocal Riemann-Hilbert problem (see \eqref{calTpm.def})
    \item[$\displaystyle\frac{\diff \calT^\pm}{\diff x}$]
    Integral operators in the equation for $\diff \mu^l/\diff x$ (see \eqref{dTdX.op})
    \item[$\widetilde{\calT}^\pm$] Integral operators in the $t=0$ nonlocal Riemann-Hilbert problem (see \eqref{caltildeT.op.def})
    \item[$\calR^\pm$] Integral operators in the $t=0$ nonlocal Riemann-Hilbert problem (see \eqref{calRpm.op.def})
    \item[$\widetilde{\calR}^\pm$] Integral operators in the $t=0$ nonlocal Riemann-Hilbert problem (see \eqref{caltildeR.op.def})
    
   
\end{itemize}
\section{Scattering Solutions}
\label{sec:mupm}

We consider the scattering solutions $\mu^\pm$ of the equation
\begin{equation}
	\label{mupm.eqn}
		\begin{aligned}
			i\mu_y + \mu_{xx} + 2ik\mu_x + u(x,y) \mu = 0,\\
			\lim_{x \to \pm \infty} \mu(k,x;y) = 1
		\end{aligned}
\end{equation}
with analytic extensions to $\pm \imag k > 0$.  We set 
\begin{equation}
	\label{tmupm.def}
		 \tmu^\pm(k,l;y) = \frac{1}{\sqrt{2\pi}}\int e^{-ilx} \mu^\pm(k,x;y) \, dx,
\end{equation}
(distribution Fourier transform) and
\begin{equation}
	\label{tmupm.sharp.def}
		\tmu^\pm_\sharp(k,l;y) = \tmu^\pm(k,l;y) - \sqrt{2\pi} \delta(l).
\end{equation}
As shown in Zhou's analysis \cite[\S 2]{Zhou90}, $\tmu^\pm$ obeys the integral equation
\begin{equation}
	\label{tmupm.eqn}
	\tmu^\pm = \sqrt{2\pi}\delta(l) + g_{k,u}^\pm(\tmu^\pm)
\end{equation}
where
\begin{equation}
	\label{gku.def}
		g_{k,u}^\pm (f)(l;y) = \frac{i}{\sqrt{2\pi}} \int_{\pm l \cdot \infty}^y 
			e^{-il(l+2k)(y-\eta)} (\tu*f)(l;\eta) \, d\eta.
\end{equation}
Here and in what follows,
$$ \int_{\pm l \cdot \infty}^y g(l;\eta) \, d\eta \coloneqq \left( H(l) \int_{\pm \infty}^y + H(-l) \int_{\mp \infty}^y\right) g(l;\eta) \, d\eta $$
where $H(l)=1$ for $l>0$ and $H(l)=0$ for $l<0$. 

All of our estimates in this section will be based on the integral equation
\begin{equation}
	\label{tmupm.sharp.eqn}
		\tmu^\pm_\sharp(k,l;y) = g_{k,u}^\pm (\sqrt{2\pi}\delta) + g_{k,u}^\pm (\tmu^\pm_\sharp)
\end{equation}
which is an immediate consequence of \eqref{tmupm.eqn}.
We will sometimes make a change of variables to obtain
\begin{equation}
	\label{int.l-eta.shift}
		\int_{\pm l \cdot \infty}^y g(l;\eta) \, d\eta =
	\int_{\pm l \cdot \infty}^0 g(l;\eta+y) \, d\eta.
\end{equation}

Let
$$ I(v) = \int_a^b e^{il(l+2k)y} v(l,y) \, dy$$
where $-\infty \leq a < b \leq \infty$. 
In what follows we will often use the bound \cite[p.\ 554, after (C2.14)]{Zhou90}
\begin{equation}
	\label{Iv.bd}
		\norm[L^2(\R^2, dl \, dk)]{I(v)} \leq \pi^\frac12 \norm[L^2(|l|^{-1} \, dl \, dy)]{v}.
\end{equation}
We will use the notation
\begin{equation}
	\label{L2.l.inv}
	\norm[L^2_y L^{2,-1}_l]{v} \coloneqq \norm[L^2(|l|^{-1} \, dl \, dy)]{v}
\end{equation}
for the norm on the right-hand side of \eqref{Iv.bd}.

Let
\begin{align}
	\label{space.X}
		X &= L^\infty_y(L^2(\R^2,\, dl \, dk)),\\
	\label{space.Y}
		Y &= L^\infty_{y,k} (L^2(\R, \, dl)).
\end{align}
It is not difficult to see that
\begin{align}
	\label{gku.bd.X}
	\norm[X \to X]{g_{k,u}^\pm} & \leq \frac{\norm[L^1(\R^2)]{\tu}}{\sqrt{2\pi}},\\	
	\label{gku.bd.Y}
	\norm[Y \to Y]{g_{k,u}^\pm} & \leq \frac{\norm[L^1(\R^2)]{\tu}}{\sqrt{2\pi}},
\end{align}
so that $(I-g_{k,u}^\pm)^{-1}$ is a bounded operator from $X$ to $X$ and also from $Y$ to $Y$ if \eqref{u.C1} holds.

Recall that
\begin{equation}
	\label{c.def}
		c= \frac{\norm[L^1(\R^2)]{\tu}}{\sqrt{2\pi}}.
\end{equation}
It is easy to estimate
\begin{align}
	\label{gku.delta.X}
	\norm[X]{g_{k,u}^\pm(\sqrt{2\pi}\delta)} &\leq \pi^\frac12 \norm[L^2_y L^{2,-1}_l]{\tu},\\
	\label{gku.delta.Y}
	\norm[Y]{g_{k,u}^\pm(\sqrt{2\pi}\delta)} &\leq \frac{\norm[L^1_y(L^2_l)]{\tu}}{\sqrt{2\pi}},
\end{align}
where we used \eqref{Iv.bd} to obtain \eqref{gku.delta.X}. 
Motivated by \eqref{gku.delta.X}, we will also assume that \eqref{u.C2.ancest} holds.

The following lemma is an immediate consequence of \eqref{tmupm.sharp.eqn}, the resolvent bounds,  \eqref{gku.delta.X}--\eqref{gku.delta.Y}, and the second resolvent formula.
\begin{lemma}
	\label{lemma:tmu.exist}
	Suppose that \eqref{u.C1} and \eqref{u.C2.ancest} hold. Then
	equation \eqref{tmupm.sharp.eqn} has a unique solution $\tmu^\pm_\sharp \in X \cap Y$. Moreover:
	\begin{enumerate}[(i)]
		\item	The estimates
			\begin{align}
				\label{tmu.est0.X}
					\norm[X]{\tmu^\pm_\sharp} &\leq (1-c)^{-1} \pi^\frac12 \norm[L^2_y L^{2-1}_l]{\tu},
				\intertext{and}
                \label{tmu.est0.Y}
					\norm[Y]{\tmu^\pm_\sharp} &\leq (1-c)^{-1} \frac{\norm[L^1_y(L^2_l)]{\tu}}{\sqrt{2\pi}}
			\end{align}	
			hold.
		\item	Suppose that $u_1,u_2 \in L^1(\R^2) \cap L^2_y L^{2,-1}_l$ with $\norm[L^1(\R^2)]{\tu_1}, \norm[L^1(\R^2)]{\tu_2} < \sqrt{2\pi}$. Then
			\begin{align}
				\label{tmu.cont.X}
					\norm[X]{\tmu^\pm(u_1) - \tmu^{\pm}(u_2)}
						&\lesssim (1-c_1)^{-1} (1-c_2)^{-1} \norm[L^1(\R^2)]{\tu_1-\tu_2},
				\intertext{and}
				\label{tmu.cont.Y}
					\norm[Y]{\tmu^\pm(u_1) - \tmu^\pm(u_2)}
						&\lesssim (1-c_1)^{-1} (1-c_2)^{-1} \norm[L^1_yL^2_l(\R^2)]{\tu_1-\tu_2},
			\end{align}
			where $c_i = \norm[L^1(\R^2)]{\tu_i}/\sqrt{2\pi}$. 
	\end{enumerate}

\end{lemma}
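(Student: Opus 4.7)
The plan is to combine a Neumann series inversion of $I - g_{k,u}^\pm$ with the inhomogeneous bounds on the source term $g_{k,u}^\pm(\sqrt{2\pi}\delta)$, and then to pass to the continuity statement via the second resolvent formula using the linear dependence of $g_{k,u}^\pm$ on $u$.

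For (i), I would first observe that \eqref{u.C1} is exactly the statement that $c<1$, so \eqref{gku.bd.X} gives $\|g_{k,u}^\pm\|_{X\to X} \le c < 1$ and similarly for $Y$. Thus $I - g_{k,u}^\pm$ is boundedly invertible on each of $X$ and $Y$ by the Neumann series, with operator-norm bound $(1-c)^{-1}$. Since \eqref{u.C2.ancest} guarantees $g_{k,u}^\pm(\sqrt{2\pi}\delta) \in X$ via \eqref{gku.delta.X} and $g_{k,u}^\pm(\sqrt{2\pi}\delta) \in Y$ via \eqref{gku.delta.Y}, the unique solutions
$$ \tmu^\pm_\sharp = (I - g_{k,u}^\pm)^{-1} g_{k,u}^\pm(\sqrt{2\pi}\delta) $$
in $X$ and in $Y$ coincide by uniqueness of the fixed point on $X \cap Y$. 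The norm estimates \eqref{tmu.est0.X} and \eqref{tmu.est0.Y} then follow by multiplying the resolvent bound by the source bounds \eqref{gku.delta.X} and \eqref{gku.delta.Y}.

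For (ii), I would exploit the fact that $g_{k,u}^\pm$ depends linearly on $u$ (since $u$ enters only through the convolution $\tu * \, \cdot$ in \eqref{gku.def}), so that
$$ g_{k,u_1}^\pm - g_{k,u_2}^\pm = g_{k,u_1-u_2}^\pm. $$
Setting $\nu_i = \tmu^\pm_\sharp(u_i)$, applying $I - g_{k,u_1}^\pm$ to $\nu_1 - \nu_2$, and rearranging via the second resolvent formula gives
$$ \nu_1 - \nu_2 = (I - g_{k,u_1}^\pm)^{-1}\!\left[ g_{k,u_1-u_2}^\pm(\sqrt{2\pi}\delta) + g_{k,u_1-u_2}^\pm(\nu_2) \right]. $$
Each factor on the right is now estimated by the bounds already established: the resolvent by $(1-c_1)^{-1}$, the first bracket term by \eqref{gku.delta.X} (respectively \eqref{gku.delta.Y}) applied with potential $u_1 - u_2$, and the second by \eqref{gku.bd.X} (respectively \eqref{gku.bd.Y}) applied to $\nu_2$, whose $X$- (respectively $Y$-) norm is controlled via part (i) by $(1-c_2)^{-1}$ times a norm of $\tu_2$. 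Absorbing the resulting $\tu_2$-dependent constants into the implicit constant in $\lesssim$ yields \eqref{tmu.cont.X} and \eqref{tmu.cont.Y}.

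The main obstacle I anticipate is the bookkeeping around the delta-function source: the full scattering solution $\tmu^\pm = \sqrt{2\pi}\delta + \tmu^\pm_\sharp$ is not in $X$ or $Y$, so one cannot directly apply the resolvent bounds to it. Splitting into the $\delta$ piece (handled by \eqref{gku.delta.X}/\eqref{gku.delta.Y}) and the $\tmu^\pm_\sharp$ piece (handled by \eqref{gku.bd.X}/\eqref{gku.bd.Y}) as above is the essential move. Aside from this, the argument is a clean linear fixed-point analysis, and no additional smoothness or decay beyond \eqref{u.C1} and \eqref{u.C2.ancest} is needed.
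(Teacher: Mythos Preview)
Your proposal is correct and follows exactly the route the paper sketches: Neumann-series invertibility of $I-g_{k,u}^\pm$ on $X$ and $Y$ from \eqref{gku.bd.X}--\eqref{gku.bd.Y}, the source bounds \eqref{gku.delta.X}--\eqref{gku.delta.Y}, and the second resolvent identity for part~(ii). One small caveat: your computation for \eqref{tmu.cont.X} naturally produces a term $\pi^{1/2}\norm[L^2_yL^{2,-1}_l]{\tu_1-\tu_2}$ from $g_{k,u_1-u_2}^\pm(\sqrt{2\pi}\delta)$ rather than a pure $\norm[L^1]{\tu_1-\tu_2}$ term; this is consistent with the paper's own one-line proof and with how the estimate is actually used downstream (cf.\ the proof of Proposition~\ref{prop:Tpm.exist}, where both norms of $\tu_1-\tu_2$ appear), so the discrepancy is in the statement's bookkeeping, not in your argument.
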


Now we consider smoothness of the solutions $\mu^\pm$ in the $x$ and $y$ variables. For use below we note that
\begin{align}
	\label{gku.l}
		l^m g_{k,u}(f)(l;y)
			&= \sum_{0 \leq 	r \leq m} 
					\binom{m}{r} \, g_{k,u_{m-r,0}}^\pm( l^r f),\\
	\label{gku.y}
		\diff_y^{m'} g_{k,u}(f)(l;y)
			&=	\sum_{0 \leq r' \leq m'} 
					\binom{m'}{r}\, g_{k,u_{0,m'-r'}}^\pm (\diff_y^{r'} f),
\end{align}
where
$$ g_{k,u_{r,r'}}(f) \coloneqq \frac{i}{\sqrt{2\pi}} \int_{\pm l \cdot \infty}^y e^{-il(l+2k))(y-\eta)}((l^r \diff_y^{r'} \tu)*f)(l;\eta) \, d\eta.$$

We will need the bounds
\begin{align}
	\label{gku.rr'.bd}
		\norm[X]{g_{k,u_{r,r'}}^\pm(\sqrt{2\pi} \delta)}	 
			&\lesssim \norm[L^2_y L^{2,-1}_l]{l^r \diff_y^{r'} \tu},\\
	\intertext{and}
	\label{gku.rr'.bd.Y}
		\norm[Y]{g_{k,u_{r,r'}}^\pm(\sqrt{2\pi} \delta)}	 
			&\lesssim \norm[L^1(\R^2)]{l^r \diff_y^{r'} \tu},
\end{align}
which follow from \eqref{gku.delta.X} and \eqref{gku.delta.Y}.

The next lemma establishes regularity of $\mu^\pm$ (see Zhou \cite[Lemma 2.24]{Zhou90}; our proof uses his ideas but establishes some additional estimates).

\begin{lemma}
	\label{lemma:tmu.smooth}
	Suppose that \eqref{u.C1} holds. Fix nonnegative integers $j$ and $j'$. Suppose that for all $m,m'$ with
	$0 \leq m \leq j$, $0 \leq m' \leq j'$,
	\begin{equation}
		\label{u.smooth.X.C1}
			l^m \diff_y^{m'} \tu \in L^1(\R^2) \cap L^2_y L^{2,-1}_l.
	\end{equation}
	Then for all $m,m'$ with $0 \leq m \leq j$, $0 \leq m' \leq j'$, 
	\begin{equation}
		\label{mupm.xy.smooth}
			\frac{\diff^{m+m'}}{\diff x^m \diff y^{m'}}(\mu^\pm-1) \in L^\infty_y(L^2(\R^2, dx \, dk))
	\end{equation}
	with norms depending on $\norm[L^1]{\tu}$ and on the norms $\norm[L^1]{l^r \diff_y^{r'} \tu}$, $\norm[L^2_y L^{2,-1}_l]{l^r \diff_y^{r'} \tu}$ for $0 \leq r \leq m$, $0 \leq r' \leq m'$.  
	
	\end{lemma}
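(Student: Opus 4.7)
The plan is to pass to the Fourier side in $x$ and establish the equivalent claim that $l^m \partial_y^{m'} \tmu^\pm_\sharp \in X$ for all $0 \leq m \leq j$ and $0 \leq m' \leq j'$. Since $\tmu^\pm_\sharp$ is the distribution Fourier transform of $\mu^\pm - 1$ in the $x$ variable, Plancherel gives
\[
	\left\| \partial_x^m \partial_y^{m'}(\mu^\pm - 1) \right\|_{L^\infty_y L^2(\R^2,\, dx\, dk)}
	= \left\| l^m \partial_y^{m'} \tmu^\pm_\sharp \right\|_X,
\]
so \eqref{mupm.xy.smooth} reduces to the Fourier-side estimate.

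I would proceed by induction on $m + m'$, with the base case $m = m' = 0$ supplied directly by Lemma \ref{lemma:tmu.exist}. For the inductive step, apply $l^m \partial_y^{m'}$ to the integral equation \eqref{tmupm.sharp.eqn}. Combining \eqref{gku.l} and \eqref{gku.y}, and using that $l^r \delta(l) = 0$ for $r \geq 1$ while $\delta(l)$ is independent of $y$, the inhomogeneous piece reduces to the single term $g^\pm_{k,u_{m,m'}}(\sqrt{2\pi}\delta)$, while on the right we isolate the term carrying the highest derivative on $\tmu^\pm_\sharp$ to obtain
\[
	(I - g_{k,u}^\pm)\bigl( l^m \partial_y^{m'} \tmu^\pm_\sharp \bigr)
		= g^\pm_{k, u_{m, m'}}(\sqrt{2\pi}\delta)
		+ \sum_{\substack{0 \leq r \leq m,\, 0 \leq r' \leq m' \\ (r, r') \neq (m, m')}} \binom{m}{r}\binom{m'}{r'} g^\pm_{k, u_{m-r, m'-r'}}(l^r \partial_y^{r'} \tmu^\pm_\sharp).
\]
Under hypothesis \eqref{u.smooth.X.C1}, the inhomogeneous term is in $X$ by \eqref{gku.rr'.bd}, and each operator $g^\pm_{k,u_{m-r,m'-r'}}$ is bounded on $X$ by the same argument as \eqref{gku.bd.X} with $\tu$ replaced by $l^{m-r} \partial_y^{m'-r'} \tu$. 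By the inductive hypothesis each $l^r \partial_y^{r'} \tmu^\pm_\sharp$ appearing on the right with $(r,r') \neq (m,m')$ lies in $X$. Since $\|g_{k,u}^\pm\|_{X \to X} \leq c < 1$ by \eqref{u.C1}, the resolvent $(I - g_{k,u}^\pm)^{-1}$ is bounded on $X$, and inversion yields the claim with the norm dependence stated.

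The main subtlety is justifying the application of $l^m \partial_y^{m'}$ to \eqref{tmupm.sharp.eqn} rigorously rather than formally: one needs to know \emph{a priori} that $\tmu^\pm_\sharp$ has the requisite distributional derivatives before treating them as genuine elements of $X$. The cleanest way is to first verify \eqref{gku.y} using the shift \eqref{int.l-eta.shift}, which moves the $y$-dependence out of the limits of integration into the integrand so the product rule applies directly; then regularize $\tu$ (for instance by convolution in $y$ with a mollifier) to obtain smooth approximations $\tmu^\pm_{\sharp,\varepsilon}$ that satisfy the formal calculation, and pass to the limit using the continuity estimate \eqref{tmu.cont.X} together with the uniform-in-$\varepsilon$ $X$-bounds coming from the induction. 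This packaging is the only real bookkeeping; with it in place the argument is a standard bootstrap.
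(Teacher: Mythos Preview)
Your proposal is correct and follows essentially the same approach as the paper: both pass to the Fourier side, derive the equation
\[
	(I-g_{k,u}^\pm)\bigl(l^m\partial_y^{m'}\tmu^\pm_\sharp\bigr)
	= g^\pm_{k,u_{m,m'}}(\sqrt{2\pi}\delta)
	+ \sum_{\substack{0\le r\le m,\ 0\le r'\le m'\\ (r,r')\neq(m,m')}} \binom{m}{r}\binom{m'}{r'}\, g^\pm_{k,u_{m-r,m'-r'}}(l^r\partial_y^{r'}\tmu^\pm_\sharp),
\]
and invert $(I-g_{k,u}^\pm)$ once the right-hand side is known to lie in $X$ by induction. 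Your induction on $m+m'$ is a slightly tidier packaging than the paper's rectangle-by-rectangle double induction, and you go a bit further by explicitly flagging (and sketching a mollification fix for) the rigor issue of differentiating the integral equation, which the paper leaves implicit.
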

	
	\begin{remark}
	\label{rem:mupm.xy.smooth}
	To prove \eqref{mupm.xy.smooth}, we will actually show that
	\begin{equation}
		\label{mupm.ly.smooth.X}
			l^m \diff_y^{m'} \tmu^\pm_\sharp \in X
	\end{equation}
	and recover the result from the Plancherel theorem.
	\end{remark}

\begin{proof}

	We will show that $l^m \diff_y^{m'}\tmu^\pm \in X$ from which the result follows.
	Let
	$$ \tmu^\pm_{\sharp,r,r'} = l^r \diff_y^{r'} \tmu^\pm_\sharp. $$
	A short computation using \eqref{tmupm.sharp.eqn} shows that 
	\begin{align}
		\label{tmupm.ly.smooth}
			\tmu^\pm_{\sharp,m,m'} &= h_{m,m'} + g_{k,u}^\pm ( \tmu^\pm_{\sharp,m,m'})	,
		\intertext{where}
		\label{tmupm.ly.h}
			h_{m,m'} &= l^m \diff_y^{m'} g_{k,u}(\sqrt{2\pi}\delta) + \sum_{\substack{0 \leq r \leq m\\0 \leq r' \leq m' \\ (r,r') \neq (m,m') }}
				c_{r,r'} g_{k,u_{m-r,m'-r'}}^\pm( l^r \diff_y^{r'} \tmu^\pm_\sharp),
	\end{align}
	and
	$$ c_{r,r'} = \binom{m}{r} \binom{m'}{r'}. $$
	To prove the result, it suffices to show that $h_{m,m'} \in X$ for all $(m,m')$ with
	$0 \leq m \leq j$ and $0 \leq m' \leq j'$. Note that estimation of $h_{m,m'}$ involves bounds on $\tmu^\pm_{\sharp,r,r'}$ for $0 \leq r \leq m$, $0\leq r' \leq m'$, but $(r,r') \neq (m,m')$. A natural approach is by double induction on $(m,m')$.
	
	First, as $h_{0,0} = g_{k,u}^\pm(\sqrt{2\pi}\delta)$, the inclusion $h_{0,0} \in X$ follows from \eqref{gku.delta.X}. The first induction steps are to bound
	\begin{align}
		\label{h.10}
			h_{1,0} 
				&= 	lg_{k,u}^\pm(\sqrt{2\pi}\delta) + 
					g_{k,u_{1,0}}^\pm(\tmu^\pm_\sharp),\\
		\label{h.01}
			h_{0,1} 
				&= \diff_y(g_{k,u}^\pm(\sqrt{2\pi}\delta)) + 
					g_{k,u_{0,1}}^\pm( \tmu^\pm_\sharp),\\
		\label{h.11}
			h_{1,1}
				&= l\diff_y g_{k,u}^\pm(\sqrt{2\pi}\delta) +
					g_{k,u_{1,0}}^\pm(\diff_y \tmu^\pm_\sharp) +
					g_{k,u_{0,1}}^\pm(l \tmu^\pm_\sharp) .
	\end{align}
	It is easy to see that
	\begin{align}
		\label{h.10.bd}
		\norm[X]{h_{1,0}}
			&\lesssim \norm[L^2_y L^{2,-1}_l]{l \tu}
				+ \norm[L^1(\R^2)]{l\tu} \norm[X]{\tmu^\pm_\sharp},\\
		\label{h.01.bd}
		\norm[X]{h_{0,1}}
			&\lesssim \norm[L^2_y L^{2,-1}_l]{\diff_y \tu}
				+ \norm[L^1(\R^2)]{\diff_y \tu } \norm[X]{\tmu^\pm_\sharp},
	\end{align}
	which implies that
	\begin{equation}
		\label{tmupm.10.01.bd}
			\norm[X]{l\tmu^\pm_\sharp} < \infty, \quad \norm[X]{\diff_y \tmu^\sharp} < \infty.
	\end{equation}
	From \eqref{tmupm.10.01.bd}, \eqref{gku.rr'.bd}, and \eqref{h.11}, it now follows that
	\begin{equation}
		\label{h.11.bd}
			\norm[X]{h_{1,1}} < \infty,
	\end{equation}
	and that $\norm[X]{l \diff_y \tmu^\pm_\sharp} < \infty$.
	
	Now suppose that $\norm[X]{l^r \diff_y^{r'} \tmu^\pm_\sharp}< \infty$ for $0 \leq r \leq m$ and $0 \leq r' \leq m'$. We proceed in a similar way, first bounding $\norm[X]{l^{m+1} \diff_y^{r'} \tmu^\pm_\sharp}$ for $0 \leq r' \leq m$ and $\norm[X]{l^r \diff_y^{m'+1} \tmu^\pm_\sharp}$ for $0 \leq r \leq m$. Finally we bound $\norm[X]{l^{m+1} \diff_y^{m+1} \tmu^\pm_\sharp}$ using these results.
	
	\medskip

\end{proof}
Finally, we estimate derivatives of $\tmu^\pm_\sharp$ with respect to $l$ and $k$. Recall $\tmu^\pm$ is the solution of the equation \eqref{tmupm.sharp.eqn} and that
$$ g_{k,u}^\pm (f)(l;y) = \frac{i}{\sqrt{2\pi}} \int_{\pm l \cdot \infty}^y 
			e^{-il(l+2k)(y-\eta)} (\tu*f)(l;\eta) \, d\eta.
 $$

In order to obtain estimates on derivatives of $\tmu^\pm_\sharp$, we first need to obtain better weighted estimates on $\tmu^\pm_\sharp$ itself. We wish to show that, given the small norm condition
\begin{equation}
    \label{tilde.c.cond}
     \widetilde{c}= \frac{\norm[L^{1,1}_l L^1_y]{\tu}}{(2\pi)^{\frac12}} < 1,
\end{equation}
\eqref{tmupm.sharp.eqn} is uniquely solvable with $\tmu^\pm_\sharp \in \widetilde{X}$ where
$$ \widetilde{X}=L^\infty_y(L^2_k L^{2,1}_l).$$ 

To solve equation \eqref{tmupm.sharp.eqn} in $\widetilde{X}$, we need to show that $g_{k,u}^\pm :  \widetilde{X} \to  \widetilde{X}$ with operator norm $<1$ and to show that the inhomogeneous term $g_{k,u}^\pm(\sqrt{2\pi} \delta) \in \widetilde{X}$. We can then apply the solution formula
$$ \tmu^\pm_\sharp = (I- g_{k,u}^\pm)^{-1} (g_{k,u}^\pm ( \sqrt{2\pi} \delta)). $$

First we need a lemma on convolutions in weighted spaces. We focus on the $l$ variable only since this is where we need the weight.

\begin{lemma}
	\label{lemma:convolve}
	Suppose that $f \in L^{2,1}_l(\R)$ and $\tu \in L^{1,1}_l(\R)$. Then $\tu*f \in L^{2,1}_l(\R)$ with
	\begin{equation}
		\label{Young.wt}
			\norm[L^{2,1}_l]{\tu*f}
				\leq \norm[L^{1,1}_l]{\tu}
						\norm[L^{2,1}_l]{f}.
	\end{equation}
\end{lemma}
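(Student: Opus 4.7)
The plan is to reduce the weighted convolution inequality to the ordinary unweighted Young's inequality $\norm[L^2]{g*h} \leq \norm[L^1]{g}\norm[L^2]{h}$ by bringing the weight $\langle l \rangle = (1+l^2)^{1/2}$ inside the integral defining $\tu * f$ via a pointwise subadditivity estimate on the weight.

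First I would record the elementary bound $\langle l \rangle \lesssim \langle l - m \rangle + \langle m \rangle$, which follows from the triangle inequality $|l| \leq |l-m|+|m|$ and the equivalence of $\langle l \rangle$ with $1+|l|$. Inserting this into the convolution integral yields the pointwise estimate
\begin{equation*}
\langle l \rangle \,|(\tu * f)(l)|
    \lesssim
        \bigl(\, (\langle \cdot \rangle |\tu|) * |f|\, \bigr)(l)
        + \bigl(\, |\tu| * (\langle \cdot \rangle |f|)\, \bigr)(l).
\end{equation*}
Taking $L^2_l$ norms of both sides and applying Young's inequality to each of the two convolutions on the right then gives
\begin{equation*}
\norm[L^{2,1}_l]{\tu*f}
    \lesssim
        \norm[L^1]{\langle \cdot \rangle \tu}\,\norm[L^2]{f}
        + \norm[L^1]{\tu}\,\norm[L^2]{\langle \cdot \rangle f}
    \lesssim
        \norm[L^{1,1}_l]{\tu}\,\norm[L^{2,1}_l]{f},
\end{equation*}
which is the desired estimate.

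There is no real obstacle in this argument; it is a routine weighted-Young computation. The only cosmetic point is the absolute constant that arises from the equivalence $\langle l \rangle \asymp 1 + |l|$. To recover the inequality exactly in the stated form (with no implicit constant), one can work instead with the equivalent weight $1+|l|$ and exploit its sharp submultiplicativity $1 + |l| \leq (1+|l-m|)(1+|m|)$, which feeds directly into Young's inequality without the preliminary splitting.
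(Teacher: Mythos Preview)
Your argument is correct and essentially the same as the paper's: the paper splits $l=(l-l')+l'$ inside the convolution, bounds $\norm[L^2]{\tu*f}$ and $\norm[L^2]{l(\tu*f)}$ separately by Young's inequality, and then combines, which is exactly your subadditivity-plus-Young computation phrased componentwise. Your remark about the constant is apt; the paper does not track it either, and the inequality is only used with an implicit constant in what follows.
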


\begin{proof}
	From the usual Young's inequality, we have
	\begin{equation}
		\label{l.Young.old}
			\norm[L^2]{\tu*f} \leq \norm[L^1_l]{\tu} \norm[L^2]{f}.
	\end{equation}	
	Next, we write
	$$ l (\tu*f)(l) = \int (l-l') \tu(l-l') f(l') \ dl' + \int \tu(l-l') l' f(l') \, dl'.$$
	Applying \eqref{l.Young.old} (but with different choices of $\tu$ and $f$), we get
	\begin{equation}
		\label{l.Young.new}
			\norm[L^2]{l (\tu*f)}
				\leq \norm[L^1]{l 
				\tu} \norm[L^2]{f} +
			\norm[L^1]{\tu} \norm[L^2]{l f}.
	\end{equation}
	We conclude that
	\eqref{Young.wt} holds.
\end{proof}

Next, we solve \eqref{tmupm.sharp.eqn} in the space $\widetilde{X}$.
\begin{lemma}
	Suppose that $\widetilde{c} < 1$
	and $\tu \in L^2_y L^{2,-1}_l \cap L^2_y L^2_l$. Then, the equation \eqref{tmupm.sharp.eqn} has a unique solution $\tmu^\pm$ with $\tmu^\pm - \sqrt{2\pi}\delta \in  \widetilde{X}$.	
\end{lemma}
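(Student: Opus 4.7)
The plan is to solve \eqref{tmupm.sharp.eqn} in $\widetilde{X}$ via the Banach contraction principle. This requires two ingredients: (i) the operator $g_{k,u}^\pm$ is a contraction on $\widetilde{X}$ with constant $\widetilde{c}<1$, and (ii) the forcing term $g_{k,u}^\pm(\sqrt{2\pi}\delta)$ lies in $\widetilde{X}$. Given these, the Neumann series
\[
\tmu^\pm_\sharp = \sum_{n=0}^{\infty} (g_{k,u}^\pm)^n\bigl(g_{k,u}^\pm(\sqrt{2\pi}\delta)\bigr)
\]
converges in $\widetilde{X}$ and provides the unique solution.

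For step (i), I would begin with the pointwise inequality
\[
|g_{k,u}^\pm(f)(k,l;y)| \leq \frac{1}{\sqrt{2\pi}} \int |(\tu * f)(k,l;\eta)|\,d\eta
\]
and apply Lemma~\ref{lemma:convolve} in the $l$-variable at fixed $(k,\eta)$ to obtain
\[
\norm[L^{2,1}_l]{(\tu * f)(k,\cdot;\eta)} \leq \norm[L^{1,1}_l]{\tu(\cdot;\eta)}\, \norm[L^{2,1}_l]{f(k,\cdot;\eta)}.
\]
Squaring and integrating in $k$, pulling the $\eta$-integral outside by Minkowski's inequality, applying Fubini to identify $\norm[L^{1,1}_l L^1_y]{\tu} = \norm[L^1_y L^{1,1}_l]{\tu}$, and finally taking $\sup_y$ yields $\norm[\widetilde{X}]{g_{k,u}^\pm(f)} \leq \widetilde{c}\,\norm[\widetilde{X}]{f}$. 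This is the same scheme that produced \eqref{gku.bd.X}, now with the weighted Young inequality in place of the unweighted one.

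For step (ii), because $(\tu * \sqrt{2\pi}\delta)(l;\eta)=\sqrt{2\pi}\,\tu(l;\eta)$, the forcing reduces to the oscillatory integral
\[
g_{k,u}^\pm(\sqrt{2\pi}\delta)(k,l;y) = i\int_{\pm l\cdot\infty}^{y} e^{-il(l+2k)(y-\eta)}\tu(l;\eta)\,d\eta,
\]
where $k$ enters only through the phase. Plancherel in the oscillating variable together with the substitution $\xi = l(l+2k)$ (the calculation underlying \eqref{Iv.bd}) gives $\int|g_{k,u}^\pm(\sqrt{2\pi}\delta)|^2\,dk \leq \pi|l|^{-1}\int|\tu(l;\eta)|^2\,d\eta$, so that
\[
\norm[L^2_k L^{2,1}_l]{g_{k,u}^\pm(\sqrt{2\pi}\delta)(\cdot,\cdot;y)}^2 \leq \pi\int\!\!\int \frac{1+l^2}{|l|}|\tu(l;\eta)|^2\,d\eta\,dl.
\]
The $|l|\leq 1$ contribution is immediately dominated by $\norm[L^2_y L^{2,-1}_l]{\tu}^2$. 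For $|l|\geq 1$ the integrand is comparable to $|l||\tu|^2$, which I expect to control by a Cauchy--Schwarz pairing $\int |l||\tu|^2 = \int|\tu|\cdot |l||\tu|$ that trades the extra power of $|l|$ between the $L^2_y L^2_l$ factor and the $L^{1,1}_l L^1_y$ factor inherited from $\widetilde{c}<1$.

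The main obstacle is precisely this large-$l$ accounting in (ii): the Plancherel weight $(1+l^2)/|l|$ grows like $|l|$ at infinity, while the hypotheses $\tu\in L^2_y L^{2,-1}_l \cap L^2_y L^2_l$ only provide weights $|l|^{-1}$ and $1$ — interpolation between them cannot reach a positive power of $|l|$. The essential technical step is therefore to absorb that extra power using the $L^{1,1}_l L^1_y$ control coming from $\widetilde{c}<1$ (the same hypothesis that drives the contraction estimate). Once this weighted forcing bound is in place, the Neumann series construction and uniqueness in $\widetilde{X}$ are routine consequences of step (i).
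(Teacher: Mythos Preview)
Your two-step plan is exactly the paper's: step (i) is carried out verbatim there via Lemma~\ref{lemma:convolve} followed by Minkowski in $\eta$, giving $\norm[\widetilde{X}\to\widetilde{X}]{g_{k,u}^\pm}\le\widetilde{c}$, and for step (ii) the paper writes the two formulas for $g_{k,u}^\pm(\sqrt{2\pi}\delta)$ and $l\,g_{k,u}^\pm(\sqrt{2\pi}\delta)$, applies \eqref{Iv.bd} to each, and records $\norm[\widetilde{X}]{g_{k,u}^\pm(\sqrt{2\pi}\delta)}\le\norm[L^2_yL^{2,-1}_l]{\tu}+\norm[L^2_yL^2_l]{\tu}$ without further discussion.

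Your worry about the large-$l$ piece is well founded, but your proposed cure does not work. Applying \eqref{Iv.bd} to the $l$-weighted part actually gives
\[
\norm[L^2_kL^2_l]{\,l\,g_{k,u}^\pm(\sqrt{2\pi}\delta)}\;\lesssim\;\Bigl(\int |l|\,|\tu(l;y)|^2\,dl\,dy\Bigr)^{1/2}=\norm[L^2_yL^{2,-1}_l]{l\tu},
\]
which is strictly stronger than $\norm[L^2_yL^2_l]{\tu}$. The pairing $\int|l||\tu|^2=\int|\tu|\cdot|l\tu|$ cannot be closed with the available data: Cauchy--Schwarz would need $l\tu\in L^2$, while H\"older against the $L^{1,1}_lL^1_y$ control would need $\tu\in L^\infty$; neither is assumed, and a scaling check (take $\tu$ concentrated near a single large $l$) shows no inequality of the form $\int|l||\tu|^2\lesssim\norm{\tu}_{L^2}\norm{l\tu}_{L^1}$ can hold. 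The honest reading is that the bound the argument produces is $\norm[L^2_yL^{2,-1}_l]{\tu}+\norm[L^2_yL^{2,-1}_l]{l\tu}$, and the second hypothesis in the lemma should be $l\tu\in L^2_yL^{2,-1}_l$ rather than $\tu\in L^2_yL^2_l$; in every application in the paper the lemma is invoked under $u\in\mathbf{E}_{1,w}$, where this stronger condition is immediate.
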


\begin{proof}

First, we estimate
\begin{align*}
	\norm[\widetilde{X}]{g_{k,u}^\pm f}
		&\leq \frac{1}{\sqrt{2\pi}}\int_{-\infty}^\infty	
				\norm[L^2_k L^{2,1}_l]{(\tu*f)(k,\cdot,\eta)} \, d\eta\\
		&\leq \frac{1}{\sqrt{2\pi}}\int_{-\infty}^\infty	
				\norm[L^2_k]{\norm[L^{2,1}_l]{(\tu*f)(k,\cdot,\eta}} \, d\eta\\
		&\leq \frac{1}{\sqrt{2\pi}}\int_{-\infty}^\infty 
				\norm[L^2_k L^{2,1}_l]{f}
				\norm[L^{1,1}_l]{\tu(\cdot;\eta)}
				\, d\eta\\
		&\leq	\frac{1}{\sqrt{2\pi}}\norm[L^1_y L^{1,1}_l]{\tu}
				\norm[\widetilde{X}]{f},
\end{align*}
from which it follows that
\begin{equation}
	\label{gku.op}
	\norm[\widetilde{X} \to \widetilde{X}]{g_{k,u}^\pm} \leq \frac{1}{\sqrt{2\pi}}\norm[L^{1,1}_l L^1_y]{\tu}	
\end{equation}
so that $\norm[\widetilde{X} \to \widetilde{X}]{g_{k,u}^\pm} < 1$ provided
\begin{equation}
	\label{tu.cond}
	\norm[L^{1,1}_l L^1_y]{\tu} < \sqrt{2\pi}.
\end{equation}
We then have 
\begin{equation}
	\label{gku.res.bd}
		\norm[\widetilde{X} \to \widetilde{X}]{(I-g_{k,u}^\pm)^{-1}} \leq \frac{1}{1-\widetilde{c}}
\end{equation}
provided $0 <  \widetilde{c} < 1$. 
Next, we compute
\begin{align*}
	g_{k,u}^\pm (\sqrt{2\pi} \delta)
		&=	\int_{\pm l \cdot \infty}^y
				e^{-il(l+2k)(y-\eta)} \tu(l;\eta) 
			\, d\eta,
	\intertext{and}
	l g_{k,u}^\pm (\sqrt{2\pi} \delta)
		&=	\int_{\pm l \cdot \infty}^y
				e^{-il(l+2k)(y-\eta)} l\tu(l;\eta) 
			\, d\eta.
\end{align*}
We conclude that
\begin{equation}
	\label{gku.vec}
	\norm[ \widetilde{X}]{g_{k,u}^\pm (\sqrt{2\pi} \delta)}
		\leq \norm[L^2_y L^{2,-1}_l]{\tu} +
			\norm[L^2_y L^2_l]{\tu}.	
\end{equation}
From \eqref{gku.res.bd} and \eqref{gku.vec} we conclude the proof.
\end{proof}

\begin{lemma}
	Suppose that $\widetilde{c} < 1$
	and $\dfrac{\diff }{\diff y}\tu \in L^2_y L^{2,-1}_l \cap L^{2,1}_y L^{2,1}_l$. Then, the equation \eqref{tmupm.sharp.eqn} has a unique solution $\tmu^\pm$ with 
 $$\frac{\diff \tmu^\pm_\sharp}{\diff y}=\frac{\diff }{\diff y}(\tmu^\pm - \sqrt{2\pi}\delta) \in L^\infty_y(L^2_k L^{2}_l) .$$
 \end{lemma}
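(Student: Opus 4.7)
The existence of a unique $\tmu^\pm$ with $\tmu^\pm_\sharp \in \widetilde{X}$ is inherited from the previous lemma under the appropriate assumptions on $\tu$ itself. Noting that the target space $L^\infty_y(L^2_k L^2_l)$ coincides with the space $X$ from \eqref{space.X} (since $L^2(\R^2, dl\,dk) = L^2_k L^2_l$), and that $\widetilde{X} \hookrightarrow X$, the plan is to differentiate the fixed-point equation \eqref{tmupm.sharp.eqn} in $y$ and solve for $v = \diff_y \tmu^\pm_\sharp$ in $X$ by a contraction argument analogous to the preceding lemma. Applying \eqref{gku.y} with $m'=1$ termwise to \eqref{tmupm.sharp.eqn} and using that $\diff_y(\sqrt{2\pi}\delta(l)) = 0$, the candidate $v$ should satisfy
\begin{equation*}
    v - g_{k,u}^\pm(v) = g_{k,u_{0,1}}^\pm(\sqrt{2\pi}\delta + \tmu^\pm_\sharp) = g_{k,\diff_y u}^\pm(\tmu^\pm),
\end{equation*}
where $\tmu^\pm = \sqrt{2\pi}\delta + \tmu^\pm_\sharp$.

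The operator $(I - g_{k,u}^\pm)$ is boundedly invertible on $X$ by \eqref{gku.bd.X}, since $\widetilde{c} < 1$ implies $c < 1$ (from $(1+l^2)^{1/2} \geq 1$). It remains to show that the forcing $g_{k,\diff_y u}^\pm(\tmu^\pm)$ lies in $X$. The $\sqrt{2\pi}\delta$-contribution is controlled by the direct analog of \eqref{gku.delta.X}: noting that the $y$-dependent interval of integration $[\pm l \cdot \infty, y]$ amounts to a truncation of $\R$, and that the bound \eqref{Iv.bd} holds uniformly under such truncation (by applying it to $\chi_{[\pm l \cdot \infty, y]} \diff_y \tu$), we obtain the $y$-uniform estimate
\begin{equation*}
    \norm[X]{g_{k,\diff_y u}^\pm(\sqrt{2\pi}\delta)} \lesssim \norm[L^2_y L^{2,-1}_l]{\diff_y \tu},
\end{equation*}
which is finite by hypothesis. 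The $\tmu^\pm_\sharp$-contribution is handled by a Young-type estimate combined with the Cauchy--Schwarz embedding $\norm[L^1_y L^1_l]{\diff_y \tu} \leq \pi \norm[L^{2,1}_y L^{2,1}_l]{\diff_y \tu}$ and the bound on $\norm[\widetilde{X}]{\tmu^\pm_\sharp}$ from the previous lemma:
\begin{equation*}
    \norm[X]{g_{k,\diff_y u}^\pm(\tmu^\pm_\sharp)} \lesssim \norm[L^{2,1}_y L^{2,1}_l]{\diff_y \tu}\, \norm[\widetilde{X}]{\tmu^\pm_\sharp}.
\end{equation*}

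The main obstacle is justifying that the formally defined $v \coloneqq (I - g_{k,u}^\pm)^{-1} g_{k,\diff_y u}^\pm(\tmu^\pm) \in X$ genuinely equals the distributional $y$-derivative of $\tmu^\pm_\sharp$, rather than a purely formal object. I would handle this via difference quotients: set $\Delta^h f(y) = h^{-1}(f(y+h) - f(y))$, take the difference of \eqref{tmupm.sharp.eqn} at $y+h$ and at $y$, divide by $h$, and prove that $\Delta^h \tmu^\pm_\sharp \to v$ in $X$ as $h \to 0$. The key ingredients are the uniform-in-$h$ contraction of $g_{k,u}^\pm$ on $X$, translation-continuity of $\tu$ in $L^{1,1}_l L^1_y$ (which controls the discrepancy $\Delta^h g_{k,u}^\pm(\cdot) - g_{k,\diff_y u}^\pm(\cdot)$ via dominated convergence), and a standard passage to the limit in the resulting perturbed fixed-point equation.
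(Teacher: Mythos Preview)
Your proposal is correct and follows essentially the same route as the paper: both differentiate the fixed-point equation \eqref{tmupm.sharp.eqn} in $y$ (the paper via the translation \eqref{int.l-eta.shift}, you via the identity \eqref{gku.y}), arrive at the same equation $v - g_{k,u}^\pm(v) = g_{k,\diff_y u}^\pm(\sqrt{2\pi}\delta) + g_{k,\diff_y u}^\pm(\tmu^\pm_\sharp)$, bound the two forcing terms by $\norm[L^2_y L^{2,-1}_l]{\diff_y\tu}$ and $\norm[L^{2,1}_y L^{2,1}_l]{\diff_y\tu}\cdot\norm{\tmu^\pm_\sharp}$ respectively, and invert $I-g_{k,u}^\pm$ on $X$. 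Your difference-quotient paragraph justifying that the formal $v$ really is $\diff_y\tmu^\pm_\sharp$ is a point of rigor the paper leaves implicit.
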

 \begin{proof}
Direct calculation gives
    \begin{equation}
	\label{tmu.sharp.int.y}
	\frac{\diff }{\diff y}\tmu^\pm_\sharp =\frac{\diff }{\diff y} g_{k,u}^\pm (\sqrt{2\pi} \delta) +\left[\frac{\diff }{\diff y}g_{k,u}^\pm \right] (\tmu^\pm_\sharp) + g_{k,u}^\pm \left(\frac{\diff }{\diff y}\tmu^\pm_\sharp \right).
\end{equation}
     We first make a translation:
     \begin{equation}
      g_{k,u}^\pm(f)(l;y) =-
		\frac{1}{\sqrt{2\pi}} \int_{\pm l \cdot \infty}^0 e^{-il(l+2k)(\eta)} (\tu*f)(l;y+\eta) \, d\eta,   
     \end{equation}
     so that, in particular, 
     \begin{align*}
         g_{k,u}^\pm (\sqrt{2\pi} \delta)
		&=	\int_{\pm l \cdot \infty}^0
				e^{-il(l+2k)\eta} \tu(l;y+\eta) 
			\, d\eta,\\
   \frac{\diff }{\diff y}  g_{k,u}^\pm (\sqrt{2\pi} \delta)&=\int_{\pm l \cdot \infty}^0
				e^{-il(l+2k)\eta}\frac{\diff }{\diff y} \tu(l;y+\eta) 
			\, d\eta.
     \end{align*}  
     We then deduce that 
     \begin{align*}
         \norm[{X}]{\frac{\diff }{\diff y}  g_{k,u}^\pm (\sqrt{2\pi} \delta)}\leq \norm[L^2_y L^{2,-1}_l]{\frac{\diff }{\diff y} \tu }.
     \end{align*}
     For the other inhomogeneous term in \eqref{tmu.sharp.int.y}, 
\begin{align*}
    \left[\frac{\diff }{\diff y}g_{k,u}^\pm \right] (\tmu^\pm_\sharp)(k,l;y) =-
		\frac{1}{\sqrt{2\pi}} \int_{\pm l \cdot \infty}^0	
			e^{-il(l+2k)\eta} \left(\frac{\diff }{\diff y}\tu*\tmu^\pm_\sharp\right)(l;\eta) \, d\eta.
\end{align*}
We then estimate

\begin{align*}
	\norm[{X}]{ \left[\frac{\diff }{\diff y}g_{k,u}^\pm \right] (\tmu^\pm_\sharp)}
		&\leq \int_{-\infty}^\infty	
				\norm[L^2_k L^{2}_l]{\left(\frac{\diff }{\diff y}\tu*\tmu^\pm_\sharp\right)(k,\cdot,\eta)} \, d\eta\\
		&\leq \int_{-\infty}^\infty	
				\norm[L^2_k]{\norm[L^{2}_l]{\left(\frac{\diff }{\diff y}\tu*\tmu^\pm_\sharp\right)(k,\cdot,\eta)}} \, d\eta\\
		&\leq \int_{-\infty}^\infty 
				\norm[L^2_k L^{2}_l]{\tmu^\pm_\sharp}
				\norm[L^{2,1}_l]{\frac{\diff }{\diff y}\tu(\cdot;\eta)}
				\, d\eta\\
		&\leq	\norm[L^{2,1}_y L^{2,1}_l]{\frac{\diff }{\diff y}\tu}
				\norm[{X}]{\tmu^\pm_\sharp}.
\end{align*}

The rest of the proof is an application of the resolvent bound
\eqref{gku.res.bd}.

\end{proof}

For the next result, we need to study the operator and resolvent of $g_{k,u}^\pm$ viewed as maps from $L^2_l$ to itself. First we have, uniformly in $y,k$, the estimate
\begin{equation}
	\label{gku.ll.bd}
		\norm[L^2_l \to L^2_l]{g_{k,u}^\pm}
			\lesssim \frac{1}{\sqrt{2\pi}}\norm[L^1]{\tu}	
\end{equation}
as easily follows from \eqref{gku.def}, Minkowski's inequality for integrals, and Young's inequality.

\begin{proposition}
   \label{prop: mu-k}
Suppose that $\widetilde{c}<1$ and 
 $u\in \mathbf{E}_{1,w}(\R^2)$. Then
\begin{equation}
\label{est: mu-k}
  \Norm{ \partial_k{\tmu_\sharp}^\pm }{L^\infty_k L^2_l}\lesssim 1+|y|,
\end{equation}
and
\begin{equation}
\label{est: mu-k-l}
   \Norm{l\partial_k{\tmu_\sharp}^\pm}{L^\infty_k L^2_l} \lesssim_k 1+|y|.
\end{equation}
Moreover,
\begin{equation}
\label{est: mu-k L2}
  \Norm{ \partial_k{\tmu_\sharp}^\pm }{L^2_k L^2_l}\lesssim 1+|y|,
\end{equation}
and 
\begin{equation}
    \label{est: mu-k  L2lk}
        \Norm{ l \partial_k{\tmu_\sharp}^\pm }{L^2_k L^2_l}\lesssim 1+|y|.
\end{equation}
\end{proposition}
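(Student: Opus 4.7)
My plan is to differentiate the integral equation \eqref{tmupm.sharp.eqn} in $k$ and view the result as an inhomogeneous equation for $v \coloneqq \partial_k \tmu_\sharp^\pm$ of the form
\begin{equation*}
    (I - g_{k,u}^\pm)\,v = (\partial_k g_{k,u}^\pm)(\sqrt{2\pi}\delta + \tmu_\sharp^\pm),
\end{equation*}
and then invert using the resolvent bounds already in hand. Since $k$ enters $g_{k,u}^\pm$ only through the oscillatory factor $e^{-il(l+2k)(y-\eta)}$, one has
\begin{equation*}
    (\partial_k g_{k,u}^\pm)(f)(l;y) = \frac{2l}{\sqrt{2\pi}} \int_{\pm l \cdot \infty}^y (y-\eta)\,e^{-il(l+2k)(y-\eta)}\,(\tu * f)(l;\eta)\,d\eta,
\end{equation*}
so the prefactor $l(y-\eta)$ is the source of both the $l$-weight in \eqref{est: mu-k-l}, \eqref{est: mu-k  L2lk} and the linear-in-$|y|$ growth throughout.

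The central algebraic step is the splitting $y-\eta = y + (-\eta)$, which decomposes $(\partial_k g_{k,u}^\pm)(f)$ as
\begin{equation*}
    -2i\,y \cdot l \, g_{k,u}^\pm(f)(l;y) \;+\; h_{k,u}^\pm(f)(l;y),
\end{equation*}
where $h_{k,u}^\pm$ is the same integral operator as $g_{k,u}^\pm$ except with driving weight $\eta\,\tu(l;\eta)$ in place of $\tu(l;\eta)$ (and an extra factor $l$ outside). The first summand contributes the explicit $|y|$ growth multiplied by $\widetilde{X}$-norms of $l\,g_{k,u}^\pm(\sqrt{2\pi}\delta)$ and $l\,\tmu_\sharp^\pm$, both of which are finite by the preceding lemma (this is where $\widetilde{c} < 1$ enters). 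The second summand is estimated by reproducing the proofs of \eqref{gku.bd.X} and \eqref{gku.delta.X}--\eqref{gku.delta.Y} verbatim, with $\tu$ replaced by $l\eta\,\tu$; the required norms $\norm[L^1(\R^2)]{l\eta\,\tu}$ and $\norm[L^2_y L^{2,-1}_l]{l\eta\,\tu}$, along with their higher-$l$-weighted versions, are all finite thanks to the $(1+y^2)^2$ multiplier and the $\mathbf{H}^{4,5}$ component of the $\mathbf{E}_{1,w}$ norm.

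With $F \coloneqq (\partial_k g_{k,u}^\pm)(\sqrt{2\pi}\delta + \tmu_\sharp^\pm)$ controlled, the four estimates fall out by inverting in the appropriate space. For the $L^\infty_k L^2_l$ estimates \eqref{est: mu-k} and \eqref{est: mu-k-l}, I use the pointwise-in-$(y,k)$ operator bound \eqref{gku.ll.bd}, which yields $\norm[L^2_l \to L^2_l]{(I-g_{k,u}^\pm)^{-1}} \leq (1-c)^{-1}$ uniformly in $(y,k)$, and control $F$ in $L^2_l$ by Minkowski's integral inequality together with Young's convolution inequality; the weighted version \eqref{est: mu-k-l} is obtained by multiplying through by $l$, commuting past $g_{k,u}^\pm$ via \eqref{gku.l} at the cost of one $l$-weight on $\tu$ (equivalently, one $x$-derivative on $u$, which is available from $\mathbf{H}^{4,5}$). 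For the $L^2_k L^2_l$ estimates \eqref{est: mu-k L2} and \eqref{est: mu-k  L2lk}, I instead work in $X = L^\infty_y(L^2(dk\,dl))$, using \eqref{gku.bd.X} for the resolvent and the sharp oscillatory bound \eqref{Iv.bd} to convert weighted $L^2_y L^{2,-1}_l$ bounds on $\tu$ and $l\eta\,\tu$ into $L^2(dk\,dl)$ bounds on $F$. The main obstacle is the bookkeeping: one must simultaneously carry weights in $l$ (from $x$-derivatives of $u$) and in $y$ (from the $\eta$ factor produced by the splitting, absorbed by the $(1+y^2)^2$ multiplier in $\mathbf{H}^{4,5}$), while tracking whether each norm is $L^1$, $L^2$, or $L^{2,-1}$ on the $l$-axis.
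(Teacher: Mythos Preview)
Your overall strategy---differentiate \eqref{tmupm.sharp.eqn} in $k$, treat the result as $(I-g_{k,u}^\pm)v = F$, and invert---is exactly what the paper does. The splitting $y-\eta = y + (-\eta)$ is also the paper's move (though the paper does not explicitly note, as you implicitly do, that $g_{k,u}^\pm(\sqrt{2\pi}\delta + \tmu_\sharp^\pm) = \tmu_\sharp^\pm$, which makes the first summand simply $-2iy\,l\tmu_\sharp^\pm$). For \eqref{est: mu-k} and the two $L^2_kL^2_l$ estimates your outline matches the paper closely.

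The one genuine divergence is in \eqref{est: mu-k-l}. You propose to multiply through by $l$ and commute via \eqref{gku.l} ``at the cost of one $l$-weight on $\tu$.'' This undercounts: after multiplying by $l$ the inhomogeneous term $lF$ carries a total factor of $l^2$ (one from $\partial_k g_{k,u}^\pm$, one from your multiplication), and distributing $l^2$ across the convolution $\tu*\tmu_\sharp^\pm$ via Leibniz produces a term $\tu*(l^2\tmu_\sharp^\pm)$. To bound this in $L^2_l$ pointwise in $(y,k)$ you need $l^2\tmu_\sharp^\pm \in L^\infty_{y,k}L^2_l$, which is not supplied by $\tmu_\sharp^\pm \in \widetilde{X}$ (that space only gives $L^2_k$-integrated control, not pointwise-in-$k$). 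This extra lemma is provable by the same bootstrap as Lemma~\ref{lemma:tmu.smooth} run in $Y$ rather than $X$, so your route does work---and in fact would yield a bound uniform in $k$, stronger than the paper's $\lesssim_k$. The paper instead avoids the $l^2\tmu_\sharp^\pm$ issue by integrating by parts in $y'$, writing $-il^2 e^{-il(l+2k)(y-y')} = -\partial_{y'}e^{-il(l+2k)(y-y')} + 2ilk\,e^{-il(l+2k)(y-y')}$; this trades $l^2$ for $\partial_{y'}$ (landing on $\tu$ and $\tmu_\sharp^\pm$, requiring only one $l$-weight on $\tmu_\sharp^\pm$) plus an $lk$ term, which is the source of the $k$-dependent constant in \eqref{est: mu-k-l}. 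So: your approach is valid but needs one more preliminary estimate than you acknowledge; the paper's integration-by-parts detour sidesteps that estimate at the price of the weaker $\lesssim_k$ conclusion.
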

\begin{proof}
 We study the following integral equation that is an easy consequence of \eqref{tmupm.sharp.eqn}:
\begin{equation}
    \label{eq:mu-k}
    \frac{\partial \tmu_\sharp^\pm }{\partial k}=\frac{\partial}{\partial k} \left[{g^\pm_{k,u}} (\delta) \right]+\left(\frac{\partial}{\partial k} g^\pm_{k,u}\right) (\tmu_\sharp^\pm) + g^\pm_{k,u}\left( \frac{\partial}{\partial k} \tmu_\sharp^\pm\right).
\end{equation}

\medskip

(i) Proof of  \eqref{est: mu-k}:

\medskip

To obtain \eqref{est: mu-k}, we will solve \eqref{eq:mu-k}  in the space $L^\infty_k L^2_l$.  First, we bound the first inhomogeneous term on the right-hand side of \eqref{eq:mu-k}.
As
\begin{align}
	\nonumber
	\frac{\diff}{\diff k}\left( g_{k,u}^\pm (\sqrt{2\pi} \delta) \right)
		&=	-i \int_{\pm l \cdot \infty}^y e^{-il(l+2k)(y-\eta)}(-2il)(y-\eta)\tu(l;\eta) \, d\eta,
\end{align}
we may bound
\begin{align}
	\label{tmu.k.inhom.bd1}
	\norm[L^2_l]{\frac{\diff}{\diff k}\left( g_{k,u}^\pm (\sqrt{2\pi} \delta) \right)}
		&\lesssim \int_{-\infty}^\infty \norm[L^2_l]{|l|\,|y-\eta|\,|\tu(l;\eta)|} \, d\eta\\
		\nonumber
		&\lesssim |y| \norm[L^1_y L^{2}_l]{l \tu } + \norm[L^1_y L^2_l]{\tu} \\
		\nonumber
		&\lesssim (1+|y|) \norm[L^{2,1}_y L^{2,1}_l]{\tu}.
\end{align}
Next, we deal with the second inhomogeneous term:
\begin{multline}
\label{mu-l}
    \left(
    	\frac{\partial}{\partial k} 
    	g^\pm_{k,u} 
    \right)(\tmu_\sharp^\pm)\\ 
    =i(2 \pi)^{-1 / 2} \int_{ \pm \infty \cdot l}^y -2il(y-y')  e^{-i l(l+2 k)(y-y')} \widetilde{u} *\tmu_\sharp^\pm(l ; y')d y'.
\end{multline}
From this we deduce that 
\begin{align}
    \Bigl\vert  (\partial_k{g^\pm_{k,u}})& (\tmu_\sharp^\pm) \Bigr\vert 
    	\leq i(2 \pi)^{-1 / 2} \\
     \nonumber
     & \left( \int_{ \pm \infty \cdot l}^y \left\vert 2il y  (\widetilde{u} *\tmu_\sharp^\pm)(l ; y') \right\vert + \left\vert 2il y' \widetilde{u} *\tmu_\sharp^\pm(l ; y') \right\vert dy'   \right)
     \intertext{so that}
    \label{tmu.k.inhom.bd2}
    \Bigl\Vert (\partial_k{g^\pm_{k,u}})& (\tmu_\sharp^\pm) \Bigr\Vert_{L^2_l}
    \\
   &\lesssim |y|
   		\Norm{\tmu_\sharp^\pm}{L^{\infty}_{y}L^{2,1}_{l}}
   		\Norm{\widetilde{u}}{L^{2,1}_{y} L^{2,2}_{l}} + 
   		\Norm{\tmu_\sharp^\pm}{L^{\infty}_{y} L^{2,1}_{l}}
   		\Norm{\widetilde{u}}{L^{2,2}_{y}L^{2,1}_{l}}.
   \nonumber
\end{align}
Finally, observe that
\begin{equation}
	\label{est: gku.op.L2l}
		\norm[L^\infty_k L^2_l \to L^\infty_k L^2_l]{g^\pm_{k,u}} 
	\leq \frac{1}{\sqrt{2\pi}} \norm[L^1]{\tu}.
\end{equation}
It now follows from \eqref{tmu.k.inhom.bd1} \eqref{tmu.k.inhom.bd2}, and \eqref{est: gku.op.L2l} that, under the condition $c<1$, equation \eqref{eq:mu-k} admits the solution
$$ \frac{\diff \tmu^\pm_\sharp}{\diff k}
	= (I-g_{k,u}^\pm)^{-1} 
		\left[ 
			\frac{\diff}{\diff k} 
				\left( 
					g_{k,u}^\pm ((2\pi)^\frac12 \delta) 
				\right) + 
			\left(
				\frac{\diff g_{k,u}^\pm}{\diff k}
			\right)(\tmu^\pm_\sharp)
		\right]
$$
which implies the bound \eqref{est: mu-k}.

\medskip

(ii) Proof of \eqref{est: mu-k L2}:

\medskip
By Plancherel's identity,
\begin{align}
	\label{tmu.k.inhom.bd1 L2}
    \Biggl\Vert \frac{\diff}{\diff k} & \left( g_{k,u}^\pm (\sqrt{2\pi} \delta) \right)  \Biggr\Vert_{L^2_k L^2_l} \\
\nonumber
 &=\left(\iint \left\vert \int_{\pm l \cdot \infty}^y e^{-il(l+2k)(y-\eta)}(-2il)(y-\eta)\tu(l;\eta) \, d\eta \right\vert^2 dkdl\right)^{1/2}\\
 \nonumber
		&\lesssim \left(\int_{\pm l \cdot \infty}^y \norm[L^2_\eta]{|l|^{ \frac32}\,(|y-\eta)\,|\tu(l;\eta)|}^2 dl \right)^{1/2}\\[5pt]
		\nonumber
		&\lesssim |y| \norm[L^2_y L^{2}_l]{|l|^{ \frac32} \tu } + \norm[L^{2,1}_y L^2_l]{|l|^{\frac32} \tu} \\[5pt]
		\nonumber
		&\lesssim (1+|y|) \norm[L^{2,1}_y L^{2,{2}}_l]{\tu}.
\end{align}
Moreover, using \eqref{mu-l} again 
\begin{align}
    \label{tmu.k.inhom.bd2 L2}
    \Bigl\Vert & (\partial_k{g^\pm_{k,u}})(\tmu_\sharp^\pm) \Bigr\Vert_{L^2_k L^2_l}
    \\
    \nonumber
   &\lesssim |y|\Norm{\tmu_\sharp^\pm}{L^{\infty}_{y} (\R, L^2_k L^{2,1}_{l})}\Norm{\widetilde{u}}{L^{2,1}_{y}  L^{2,1}_{l}} + \Norm{\tmu_\sharp^\pm}{L^{\infty}_{y} (\R, L^2_k L^{2,1}_{l})}\Norm{\widetilde{u}}{L^{2,2}_{y}L^{2,1}_{l}}.
\end{align}
So using \eqref{tmu.k.inhom.bd1 L2} and \eqref{tmu.k.inhom.bd2 L2} and the operator bound \eqref{gku.bd.X}, we conclude that \eqref{est: mu-k L2} holds.

\medskip

(iii) Proof of \eqref{est: mu-k-l}:

\medskip

On multiplying by $l$, equation \eqref{eq:mu-k}
becomes
\begin{align}
	\label{eq:mu-k-l}
	l \frac{\diff \tmu^\pm_\sharp}{\diff k}
		&=	l \frac{\diff}{\diff k}\left[ g_{k,u}^\pm((2\pi)^\frac12 \delta)\right] + 
			l \left(\frac{\diff}{\diff k} g_{k,u}^\pm \right)(\tmu^\pm_\sharp) + 
			l g_{k,u}^\pm \left( \frac{\diff}{\diff k} \tmu^\pm_\sharp \right)\\
	\nonumber	
		&=	h + g_{k,u}^\pm \left( l \frac{\diff \tmu^\pm_\sharp}{\diff k}\right). 
	\intertext{Here}
	\label{eq:mu-k-l.h}
		h  &= h_1 + h_2 + h_3\\
    \intertext{where}
    \nonumber
    h_1 &= l \frac{\diff}{\diff k}\left[ g_{k,u}^\pm((2\pi)^\frac12 \delta)\right],\\
    \nonumber
    h_2 &= l \left(\frac{\diff}{\diff k} g_{k,u}^\pm \right)(\tmu^\pm_\sharp),\\
    \nonumber
    h_3 &= - \frac{i}{\sqrt{2\pi}}
			\int_{\pm l \cdot \infty}^y 
				e^{-il(l+2k)(y-\eta)} 
					(l-l') \tu (l-l';y) 
						\frac{\diff \tmu^\pm_\sharp}{\diff k} 
			\, dl' \, d\eta.
\end{align}
To prove \eqref{est: mu-k-l}, it suffices to show that $h \in L^2_l$ with a bound of order $(1+|y|)$. 

We consider the three terms in the inhomogeneous term \eqref{eq:mu-k-l.h}.

First, we bound $h_1$. Since
\begin{align}
	\label{eq:mu-k-l.h1}
	h_1	&=	-2 \int_{\pm l \cdot \infty}^y e^{-il(l+2k)(y-\eta)} l^2(y-\eta)\tu(l;\eta) \, d\eta,
\intertext{we may estimate}
	\label{mukl.k.h1.est}
	\norm[L^2_l]{h_1}
		&\lesssim |y| \, \norm[L^{2,2}_l L^{1}_y]{\tu} + \norm[L^{2,{2}}_l L^{1,1}_y]{\tu}\\
		\nonumber
		&\lesssim |y| \, \norm[L^{2,2}_l L^{2,2}_{ y}]{\tu}.
\end{align}

Next, we bound $h_2$. Integrating by parts, we have 
\begin{align}
    \label{mukl.k.h2.form}
    l &\left(\frac{\partial g^\pm_{k,u}}{\diff k} \right) (\tmu_\sharp^\pm) \\
    \nonumber
    &=i(2 \pi)^{-1 / 2} \int_{ \pm \infty \cdot l}^y -2il^2(y-y')  e^{-i l(l+2 k)(y-y')} \widetilde{u} *\tmu_\sharp^\pm(l ; y')d y' \\
    \nonumber
    &=-i(2 \pi)^{-1 / 2}\int_{ \pm \infty \cdot l}^y 2(y-y')\frac{\partial}{\partial y'} e^{-i l(l+2 k)(y-y')} \widetilde{u} *\tmu_\sharp^\pm(l ; y')d y'\\
    \nonumber
    &\quad +i(2 \pi)^{-1 / 2}\int_{ \pm \infty \cdot l}^y 2lk(y-y')e^{-i l(l+2 k)(y-y')} \widetilde{u} *\tmu_\sharp^\pm(l ; y')d y'\\
    \nonumber
    &=i(2 \pi)^{-1 / 2}\int_{ \pm \infty \cdot l}^y 2 e^{-i l(l+2 k)(y-y')} \widetilde{u} *\tmu_\sharp^\pm(l ; y')d y'\\
    \nonumber
    &\quad +i(2 \pi)^{-1 / 2}\int_{ \pm \infty \cdot l}^y 2(y-y') e^{-i l(l+2 k)(y-y')}\frac{\diff}{\diff y'}\left[\widetilde{u} *\tmu_\sharp^\pm(l ; y')\right]d y'\\
    \nonumber
   &\quad +i(2 \pi)^{-1 / 2}\int_{ \pm \infty \cdot l}^y 2lk(y-y')e^{-i l(l+2 k)(y-y')} \widetilde{u} *\tmu_\sharp^\pm(l ; y')d y',
\end{align}
where
\begin{align}
    \label{muk.k.h2.form.conv}
    \frac{\diff }{\diff y'}\left(\widetilde{u} *\tmu_\sharp^\pm\right)(l ; y')
    	&= \int_{\mathbb{R}}\frac{\diff \widetilde{u}}{\diff y'}(l', y')\tmu_\sharp(l-l',y')dl'\\
    	\nonumber
    	&\quad +\int_{\mathbb{R}}\widetilde{u}(l', y')\frac{\diff \tmu_\sharp^\pm}{\diff y'}(l-l',y')dl'.
\end{align}
Thus we deduce 
\begin{align}
	\label{mu.kl.h2.est}
	\bigl\Vert h_2 & \bigr\Vert_{L^2_l}\\
	\nonumber
    &\lesssim 	\Norm{\tmu_\sharp^\pm}{L^{\infty}_{y}L^{2}_{l}}
    			\Norm{\widetilde{u}}{L^{2,1}_{y} L^{2,1}_{l}} \\
    \nonumber
    &\quad +|y|	\Norm{\tmu_\sharp^\pm}{L^{\infty}_{y}L^{2}_{l}}
    			\Norm{\frac{\diff \widetilde{u}}{\diff y}}{L^{2,1}_{y} L^{2,1}_{l}}+
    			\Norm{\tmu_\sharp^\pm}{L^{\infty}_{y}L^{2,1}_{l}}
    			\Norm{\frac{\diff \widetilde{u}}{\diff y}}{L^{2,2}_{y} L^{2,1}_{l}}\\
    \nonumber
    &\quad +|y|	\Norm{\frac{\diff \tmu_\sharp^\pm }{\diff y}}{L^{\infty}_{y}L^{2}_{l}}
    			\Norm{\widetilde{u}}{L^{2,1}_{y} L^{2,1}_{l}}+
    			\Norm{\frac{\diff \tmu_\sharp^\pm}{\diff y}}{L^{\infty}_{y}L^{2}_{l}}
    			\Norm{\widetilde{u}}{L^{2,2}_{y} L^{2,1}_{l}}\\
    \nonumber
    &\quad +|k|\left( |y|\Norm{\tmu_\sharp^\pm}{L^{\infty}_{y}L^{2,1}_{l}}\Norm{\widetilde{u}}{L^{2,1}_{y} L^{2,1}_{l}} + \Norm{\widetilde{u}}{L^{2,2}_{y} L^{2,2}_{l}}\Norm{\tmu_\sharp^\pm}{L^{\infty}_{y}L^{2}_{l}}\right).
\end{align}

Finally, we bound $h_3$. We have
\begin{align}
	\label{mukl.k.h3.est}
	\norm[L^2_l]{h_3}
		&\lesssim 	\int_{-\infty}^\infty \norm[L^2_l]{\left(l \tu * \frac{\diff \tmu^\pm_\sharp}{\diff k} \right)(k,\cdot,\eta) } \, d\eta \\
		\nonumber
		&\lesssim \norm[L^1_l L^1_y]{l\tu} \norm[L^2_l]{\frac{\diff \tmu^\pm_\sharp}{\diff k} }\\
		\nonumber
		&\lesssim \norm[L^{2,2}_l L^{2,1}_y]{\tu} (1+|y|),
\end{align}
where we used \eqref{est: mu-k} in the last line.

Combining \eqref{mukl.k.h1.est}--\eqref{mukl.k.h3.est} and the $L^2_l \to L^2_l$ boundedness of the resolvent, we recover \eqref{est: mu-k-l}.

\medskip
(iv) Proof of \eqref{est: mu-k  L2lk}:

\medskip

Using \eqref{tmupm.sharp.eqn} we find that
\begin{align}
	\label{tmupm.kl.eqn}
		l\frac{\diff \tmu^\pm_\sharp}{\diff k}
			&= h_{1,1} +
				g_{k,u}^\pm \left( l\frac{\diff \tmu^\pm_\sharp}{\diff k}\right) ,
		\intertext{where}
	\label{tmupm.k.h11}
		h_{1,1} 
			&= l \frac{\diff g_{k,u}^\pm}{\diff k}(\sqrt{2\pi}\delta) 
				+ l \frac{\diff g_{k,u}^\pm}{\diff k}(\tmu^\pm_\sharp).
\end{align}
Since $(I-g_{k,u}^\pm)^{-1}$ is a bounded operator from $L^2_lL^2_k$ to itself, it suffices to show that $h_{1,1}$ belongs to $L^2_k L^2_l$ spaces up to factors of $(1+|y|)$. 

To estimate $h_{1,1}$, we first note that 
\begin{align}
	\label{tmupm.k.h11.t1}
		l \frac{\diff g_{k,u}^\pm}{\diff k}&(\sqrt{2\pi}\delta)\\
			&=	\int_{\pm l \cdot \infty}^0 e^{il(l+2k)\eta}2l^2\eta\tu(l;y+\eta)\, d\eta,
			\nonumber
	\intertext{and}
    \label{tmupm.k.h11.gku.conv}
		l \frac{\diff g_{k,u}^\pm}{\diff k}&(f)(l;y)\\
			&= -\frac{1}{\sqrt{2\pi}}
				\int_{\pm l \cdot \infty}^0 e^{il(l+2k)\eta} (2\eta) 
			\nonumber\\[5pt]
			&\qquad \qquad 	
						\left[
							(l^2\tu)*f + 2 (l\tu)*(lf) + \tu*(l^2f)
						\right](l;y+\eta) 
				\, d\eta,
			\nonumber 
\end{align}
from which we obtain the bounds
\begin{align}
	\label{gku.lk.delta.l2kl}
		\norm[L^2_k L^2_l]{l \frac{\diff g_{k,u}^\pm}{\diff k}(\sqrt{2\pi}\delta)}
			&\lesssim (1+|y|)\norm[L^2_\eta L^2_l]{(1+l^2)\tu}\\
            &\quad + \norm[L^2_\eta L^2_l]{(1+l^2)l\eta \tu(\eta,l) }
            \nonumber\\
	\label{gku.lk.l2kl}
		\norm[L^2_k L^2_l]{l \frac{\diff g_{k,u}^\pm}{\diff k}(\tmu^\pm_\sharp)}
			&\lesssim 
                |y|\norm[L^1_{l,\eta}]{l^2 \tu} 
                    \norm[L^\infty_\eta(L^2_kL^2_l)]{\tmu^\pm_\sharp}\\
                &\quad + 
                \norm[L^1_{l,\eta}]{\eta l^2 \tu}
                \norm[L^\infty_\eta(L^2_k L^2_l)]{\tmu^\pm_\sharp}
			\nonumber\\
            &\quad + |y| \norm[L^1_{l,\eta}]{l\tu} 
                        \norm[L^\infty_\eta(L^2_kL^2_l)]{l\tmu^\pm_\sharp}
            \nonumber\\
            &\quad + \norm[L^1_{l,\eta}]{\eta l \tu} 
                        \norm[L^\infty_\eta(L^2_lL^2_k)]{l \tmu^\pm_\sharp}
            \nonumber\\
            &\quad + |y|\norm[L^1_{l,\eta}]{\tu} 
                        \norm[L^\infty_\eta(L^2_k L^2_l)]{l^2 \tmu^\pm_\sharp} 
            \nonumber\\
            &\quad + \norm[L^1_{l,\eta}]{\eta \tu} 
                        \norm[L^\infty_\eta(L^2_k L^2_l)]{l^2 \tmu^\pm_\sharp}
            \nonumber
\end{align}
To obtain \eqref{gku.lk.delta.l2kl}, we used \eqref{tmupm.k.h11.t1} and the estimate \eqref{Iv.bd} together with the bound $\norm[L^2_k L^{2,-1}_l]{l^2 f} \lesssim 
\norm[L^2_k L^2_l]{(1+l^2) f}$. To obtain \eqref{gku.lk.l2kl}, we bounded each of the three right-hand terms in \eqref{tmupm.k.h11.gku.conv} (with $f=\tmu^\pm_\sharp$) using the estimate
$$ \int_{\R} \norm[L^2_k L^2_l]{v*f(\dotarg,\dotarg;y)} \, dy \leq \norm[L^1_y L^1_l]{v} \norm[L^\infty_y(L^2_k L^2_l)]{f},$$
where the convolution is in the $l$ variable only and $v=v(l;y)$. There are two terms for each right-hand term of \eqref{tmupm.k.h11.gku.conv} as the convolution is a function of $y+\eta$. The use of the variables $y$ and $\eta$ distinguishes between $y$ dependence and estimates involving $L^1$ and $L^\infty$ norms.

These estimates show that $\norm[L^2_k L^2_l]{h_{1,1}} \lesssim (1+|y|)$ provided that \eqref{u.C1}, \eqref{u.C2}, \eqref{u.C2a}, \eqref{u.C3} hold 
(in particular these conditions imply that the hypotheses of Lemma \ref{lemma:tmu.smooth}(i) hold with $j=2,j'=0$). 

\end{proof}

\begin{remark}
    From Proposition \ref{prop: mu-k} we can deduce
\begin{align}
\label{tmupm.k.est3}
	\norm[ L^\infty_k L^1_l ]{\frac{\diff \tmu^\pm_\sharp}{\diff k}}
	&\lesssim (1+|y|). 
\end{align}
 To obtain \eqref{tmupm.k.est3}, we use \eqref{est: mu-k} and \eqref{est: mu-k-l} together with the estimate
$$ \norm[L^1_l]{f} \lesssim \norm[L^2]{(1+l^2)^\frac12 f}.$$
\end{remark}

\section{The Non-Local Riemann Hilbert Problem}
\label{sec:NLRHP}

Following \cite{Zhou90}, we now formally construct the scattering data (\S \ref{subsec:scattering.data}) and the nonlocal RHP (\S \ref{subsec:NLRHP.def}). 

\subsection{Scattering Data}
\label{subsec:scattering.data}

Below we denote by $\psi$ (respectively, $\phi$) the solution corresponding to $\mu^l, \mu^r, \mu^\pm$ (respectively $\nu^l, \nu^r, \nu^\pm$ see \cite[(2.5)]{Zhou90} for the derivation ), 
and $\Psi$ (respectively $\Phi$) the integral operator with kernel $\psi(\cdot, \cdot; y)$ (respectively $\phi(\cdot, \cdot; y)$), and by $W(y)$ the unitary multiplier by $e^{-ik^2y}$ and by $\Psi_0(y)$ the integral operator with kernel $(2\pi)^{-1/2}(\mu(k,x;y)-1)e^{ikx} $. Finally, let $\mathbf{F}$ denote the Fourier transform: $\mathbf{L}_2(dk)\to \mathbf{L}_2(dx)$. We can write for instance 
\begin{align}
\Psi(y)[f]&=\dfrac{1}{\sqrt{2\pi }}e^{-ik^2y}\int \left( f(x)e^{ikx} + f(x)(\mu(k,x;y)-1)e^{ikx}\right)dx\\
\nonumber
 &=\dfrac{1}{\sqrt{2\pi }}\int e^{-i(-kx+k^2y)}f(x)\mu(k,x;y)dx
\end{align}
and similarly
\begin{align}
\Phi(y)[f]&=\dfrac{1}{\sqrt{2\pi }}\int \left( f(k)e^{-ikx} + f(k)(\nu(k,x;y)-1)e^{-ikx}\right)e^{ik^2y}dk\\
\nonumber
 &=\dfrac{1}{\sqrt{2\pi }}\int e^{i(-kx+k^2y)}f(k)\nu(k,x;y)dk.
\end{align}
Letting $\check{f}$ be the inverse\textit{ Fourier} transform with respect to the $x$ variable,  we proceed to define
\begin{align}
\label{scatt.Tpm.def} 
& T^{ \pm}(k, k+l)=-i(2 \pi)^{-1} H( \pm l) \int  \widetilde{u} * \widetilde{\mu}^{ \pm}(k, l ; y') e^{i l(l+2 k) y'}d y', \\
\label{scatt.tildeTpm.def} 
& \tilde{T}^{ \pm}(k+l, k)=i(2 \pi)^{-1} H(\mp l) \int  \check{u} * \check{\nu}^{ \pm}(k, l ; y') e^{-i l(l+2 k) y'}d y', \\
\label{scatt.Rpm.def} 
& R^{ \pm}(k, k+l)=i(2 \pi)^{-1} H(\mp l) \int  \widetilde{u} * \widetilde{\mu}^{ \pm}(k, l ; y') e^{i l(l+2 k) y'}d y', \\
\label{scatt.tildeRpm.def} 
& \tilde{R}^{ \pm}(k+l, k)=-i(2 \pi)^{-1} H( \pm l) \int  \check{u} * \check{\nu}^{ \pm}(k, l ; y') e^{-i l(l+2 k) y'}d y'.
\end{align}

We then define integral operators $\calT^\pm$ and $\calR^\pm$ by 
\begin{align}
    \label{calTpm.op.def}
    \left( \calT^\pm_{x,y} f\right)(k)& = \int T^\pm(k,l) e^{i(l-k)x-i(k^2-l^2)y} f(l) \, dl, \\
    \label{caltildeT.op.def}
    \left( \widetilde{\calT}^\pm_{x,y} f\right)(k)& = \int \tilde{T}^\pm(k,l) e^{i(l-k)x-i(k^2-l^2)y} f(l) \, dl,
    \\
    \label{calRpm.op.def}
    \left( \calR^\pm_{x,y} f \right)(k) &= \int R^\pm(k,l) e^{i(l-k)x-i(k^2-l^2)y} f(l) \, dl \\
    \label{caltildeR.op.def}
    \left( \widetilde{\calR}^\pm_{x,y} f\right)(k)  &= \int \widetilde{R}^\pm(k,l) e^{i(l-k)x-i(k^2-l^2)y} f(l) \, dl
\end{align}

By letting $y\to \pm \infty$, we first note that 
\begin{equation}
\Psi^l\Phi^l=\Psi^r\Phi^r=I
\end{equation}
and further derive
\begin{align}
\label{phipsi}
\Psi^+&\Phi^l\left[f \right]\\
	&=W(y)\left( \mathbf{F}^{-1}+\Psi^+_0(y) \right)\left( \mathbf{F}+\Phi^l_0(y) \right)W^{-1}(y)\left[f \right]
	\nonumber\\
              &=W(y)\left(I+\Psi^+_0(y) \mathbf{F} \right)W^{-1}(y)\left[f \right]
              \nonumber\\
              &=f+W(y)\Psi^+_0(y) \mathbf{F} W^{-1}(y)\left[f \right]
              \nonumber\\
              &=f+   \dfrac{1}{\sqrt{2\pi }}\int e^{-i(-kx+k^2y)}(\mu^+(k,x;y)-1) \left(\int e^{-ilx} e^{il^2y} f(l)\, dl \right) \, dx
              \nonumber\\
              &= f+ \frac{1}{\sqrt{2\pi}}\int e^{i(l^2-k^2)y}\left[  \frac{1}{\sqrt{2\pi}} \int e^{-i(l-k)x}  \left(\mu^+(k,x;y)-1  \right)dx\right]f(l) \, dl
 \nonumber \\            
              &=f+ \frac{1}{\sqrt{2\pi}}\int e^{i(l^2-k^2)y} 
              \nonumber\\
              &\qquad \left[ \frac{i}{\sqrt{2\pi}}\int_{+\infty\cdot(l-k)}^y e^{-i(l^2-k^2)(y-y')}\widetilde{u}*\widetilde{\mu}^+(k,l-k; y')dy' \right] f(l)\, dl
              \nonumber\\
              &=f-\frac{i}{\sqrt{2\pi}}\int H(l-k)\left[ \int e^{i(l^2-k^2)y'}\widetilde{u}*\widetilde{\mu}^+(k,l-k; y')dy'  \right] f(l)\, dl
              \nonumber\\
              &=f-\frac{i}{\sqrt{2\pi}}\int H(+l)\left[\int e^{il(l+2k)y'} \widetilde{u}*\widetilde{\mu}^+(k,l; y')dy'\right] f(l+k)\, dl
             \nonumber 
\end{align}
Thus we obtain
\begin{equation}
\lim_{y\to-\infty } \Psi^+\Phi^{l}[f]=f+\int T^+(k,k+l)f(k+l)dl.
\end{equation}
Similarly,
\begin{align}
\label{psiphi}
\Psi^l&\Phi^+\left[f \right]\\
	\nonumber 
	&=W(y)\left( \mathbf{F}^{-1}+\Psi^l_0(y) \right)\left( \mathbf{F}+\Phi^+_0(y) \right)W^{-1}(y)\left[f \right]\\
\nonumber
              &=W(y)\left(I+ \mathbf{F}^{-1}\Phi^+_0(y) \right)W^{-1}(y)\left[f \right]\\
              \nonumber
              &=f+W(y)\mathbf{F}^{-1}\Phi^+_0(y)  W^{-1}(y)\left[f \right]\\
              \nonumber
              &=f+   \dfrac{1}{\sqrt{2\pi }}\int e^{i(-l^2y+lx)} \left(\int e^{-ikx+ik^2y}(\nu^\pm(k,x;y)-1) f(k)dk \right) \, dx\\
              \nonumber
              &= f+ \frac{1}{\sqrt{2\pi}}\int e^{i(-l^2+k^2)y}\left[  \frac{1}{\sqrt{2\pi}} \int e^{i(l-k)x}  \left(\nu^+(k,x;y)-1  \right)dx\right]f(k) \, dk\\
 \nonumber             
              &=f+\frac{1}{\sqrt{2\pi}}\int e^{i(-l^2+k^2)y}\\
             \nonumber
              & \qquad  \left[ \frac{i}{\sqrt{2\pi}}\int_{-\infty\cdot(l-k)}^y e^{i(l^2-k^2)(y-y')}\check{u}*\check{\nu}^+(k, l-k; y')dy' \right] f(k)\, dk\\
              \nonumber
              &=f-\frac{i}{\sqrt{2\pi}}\int H(k-l)\left[ \int e^{i(l^2-k^2)y'}\check{u}*\check{\nu}^+(k, l-k; y') \, dy'  \right] f(k)\, dk\\
              \nonumber
              &=f-\frac{i}{\sqrt{2\pi}}\int H(-l)\left[\int e^{il(l+2k)y'} \check{u}*\check{\nu}^+(k,l; y')dy'\right] f(k)\, dk.
\end{align}
We then consider
\begin{equation}
\Psi^+\Phi^+[f]=(I+\mathcal{T}^+)(I+\widetilde{\mathcal{T}}^+)[f].
\end{equation}
 We observe that the integral on the RHS of \eqref{psiphi} is given by
$$\int_l^\infty\left[ \int e^{i(l^2-k^2)y'}\check{u}*\check{\nu}^+(k, l-k; y')dy'  \right] f(k)dk$$
which implies that $\widetilde{\mathcal{T}}$ is upper triangular. Similarly, the integral on the RHS of \eqref{phipsi} is given by  
$$\int_k^\infty\left[ \int e^{i(l^2-k^2)y'}\widetilde{u}*\widetilde{\mu}^+(k,l-k; y')dy'  \right] f(l)dl$$
which implies that ${\mathcal{T}}$ is upper triangular. We thus conclude that $\Psi^+\Phi^+=I$. Now suppose that
$\Phi^+f=0$, then we have
\begin{equation}
\Psi^+ \Phi^+ f=0=f
\end{equation}
Since $\Phi^+$ is Fredholm with index zero, we conclude that $\Phi^+$ is invertible and thus $\Psi^+$ is invertible. 
Setting the operator
\begin{equation}
\label{op: f}
I+\mathcal{F}:=(I+\mathcal{T})(I-\widetilde{\mathcal{T}})=(I+\mathcal{R})(I-\widetilde{\mathcal{R}})
\end{equation}
and this operator relates:
\begin{equation}
\label{RHP-1}
\Psi^+=(I+\mathcal{F})\Psi^-.
\end{equation}
\begin{proposition}\cite[Proposition 4.4]{Zhou90}
If we assume that $I+\mathcal{S}$ is unitary, then the only solution to the Riemann-Hilbert problem \cite[(4.1)]{Zhou90} is zero.
\end{proposition}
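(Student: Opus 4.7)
The approach would be Zhou's vanishing-lemma strategy, adapted to the operator-valued nonlocal setting. Let $\Psi^\pm$ be a solution of the homogeneous Riemann-Hilbert problem satisfying $\Psi^\pm \to 0$ as $|k| \to \infty$ in the respective half-planes. The goal is to pair $\Psi^+$ against a suitably reflected adjoint of $\Psi^-$ so that the resulting scalar-valued quantity is analytic in a single half-plane, then use Cauchy's theorem to turn this into an $L^2$-norm identity which, via the unitarity of $I+\mathcal{S}$, forces vanishing.

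Concretely, for a dense family of test elements $\varphi$ I would form
\begin{equation}
    h(k) = \left\langle \Psi^+(k)\varphi,\, \Psi^-(\bar k)^*\varphi \right\rangle,
\end{equation}
which is analytic in $\operatorname{Im} k > 0$ by separate analyticity of $\Psi^+$ there and of $\Psi^-$ in the lower half-plane (transferred up by Schwarz reflection). Combined with the decay of $\Psi^\pm$ at infinity --- which follows from the bounds on $\mu^\pm$ and $\tmu^\pm_\sharp$ in Section \ref{sec:mupm}, in particular from Proposition \ref{prop: mu-k} and the hypothesis $u \in \mathbf{E}_{1,w}$ --- a contour integral along a large semicircle in the upper half-plane yields $\int_{\mathbb{R}} h(k)\, dk = 0$.

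Substituting the jump relation $\Psi^+ = (I+\mathcal{F})\Psi^-$ and symmetrizing by adding the complex conjugate, one obtains
\begin{equation}
    \int_{\mathbb{R}} \left\langle \bigl[(I+\mathcal{F}) + (I+\mathcal{F})^*\bigr]\Psi^-(k)\varphi,\, \Psi^-(k)\varphi \right\rangle dk = 0.
\end{equation}
Using the factorizations in \eqref{op: f}, namely $I+\mathcal{F} = (I+\mathcal{T})(I-\widetilde{\mathcal{T}}) = (I+\mathcal{R})(I-\widetilde{\mathcal{R}})$, together with the hypothesis that the symmetric combination $I+\mathcal{S}$ is unitary, this quadratic form can be rewritten as $\|\Phi\|^2 = 0$, where $\Phi$ is obtained from $\Psi^-\varphi$ by one of the triangular factors $I-\widetilde{\mathcal{T}}$ or $I-\widetilde{\mathcal{R}}$. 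Since the upper-triangular structure of $\widetilde{\mathcal{T}}$ established in \eqref{psiphi} makes $I-\widetilde{\mathcal{T}}$ invertible on the relevant space (identity plus upper-triangular compact in the $(k,l)$ variables), this forces $\Psi^-\varphi = 0$ for all $\varphi$, hence $\Psi^- \equiv 0$, and then $\Psi^+ = (I+\mathcal{F})\Psi^- = 0$.

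The main obstacle is the third step: converting the pointwise operator unitarity of $I+\mathcal{S}$ into a genuine coercive identity after integration in $k$. This requires carefully matching the algebraic relations among the four families of scattering kernels $T^\pm$, $\widetilde{T}^\pm$, $R^\pm$, $\widetilde{R}^\pm$ --- which come from Schwarz reflection of $\mu^\pm, \nu^\pm$ and reality of $u$ --- with the triangular factorization in \eqref{op: f}, so that the symmetrized operator $(I+\mathcal{F})+(I+\mathcal{F})^*$ can be identified with $(I-\widetilde{\mathcal{T}})^*(I-\widetilde{\mathcal{T}})$ up to conjugation by a unitary. A secondary technical issue is justifying the contour deformation on large semicircles, which rests on the operator-norm decay of $\Psi^\pm(y)$ obtainable from the estimates on $\tmu^\pm_\sharp$ developed in Section \ref{sec:mupm}.
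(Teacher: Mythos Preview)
Your proposal follows the same vanishing-lemma skeleton as the paper: pair $\mu^+$ against $\overline{\mu^-}$, use analyticity in complementary half-planes plus decay to obtain $\int_{\mathbb R}\overline{\mu^-}\mu^+\,dl=0$, substitute the jump $\mu^+=(I+\mathcal F)\mu^-$, and extract an $L^2$-norm from the algebraic structure coming from unitarity. The paper, however, is more direct and avoids your symmetrization step entirely. It first derives from unitarity of $I+\mathcal S$ the operator identity
\[
(I-\mathcal T^-_{x,y})^{-1}=I+\mathcal T^{+*}_{x,y}=I-\widetilde{\mathcal T}^-_{x,y}
\]
via a short chain of equivalences using the triangular factorizations and \cite[Prop.~3.18, eq.~(3.17)]{Zhou90}. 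Combined with the factorization $I+\mathcal F=(I+\mathcal T^+)(I-\widetilde{\mathcal T}^-)$ from \eqref{op: f}, this exhibits $I+\mathcal F=(I+\mathcal T^+)(I+\mathcal T^+)^*$ as \emph{already} positive self-adjoint, so no symmetrization is needed: substituting directly gives
\[
0=\int\overline{\mu^-}(I+\mathcal F)\mu^-\,dl=\bigl\|(I+\mathcal T^{+*})\mu^-\bigr\|_{L^2}^2,
\]
and invertibility of $I+\mathcal T^{+*}$ finishes. The ``main obstacle'' you flag---matching the kernel symmetries to obtain coercivity---is exactly what the displayed identity resolves in one line; your proposed route via $(I+\mathcal F)+(I+\mathcal F)^*$ would recover the same thing with a detour. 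The paper also works with the vector $\mu^\pm$ directly on the real line rather than with operator-valued $\Psi^\pm$ against test elements, which keeps the bookkeeping lighter.
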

\begin{proof}
We first derive the following identity 
\begin{equation}
\left( I-\mathcal{T}^-_{x,y} \right)^{-1}=I+\mathcal{T}^{+*}_{x,y}.
\end{equation}
To see this, we observe that
\begin{align*}
&(I+\mathcal{R}^+)(I+\mathcal{S}^+)=I+\mathcal{T}^+\\
&\Leftrightarrow (I+\mathcal{S}^{+*})(I+\mathcal{R}^{+*})=I+\mathcal{T}^{+*}\\
&\Leftrightarrow (I+\mathcal{S})^{-1}(I+\mathcal{R}^{+*})=I+\mathcal{T}^{+*}\\
&\Leftrightarrow (I+\mathcal{S})^{-1}(I-\widetilde{\mathcal{R}}^{-})=I+\mathcal{T}^{+*}\\
&\Leftrightarrow (I-\widetilde{\mathcal{R}}^{-})=(I-\mathcal{R}^-)^{-1}(I-\mathcal{T}^-)(I+\mathcal{T}^{+*})\\
&\Leftrightarrow I=(I-\mathcal{T}^-)(I+\mathcal{T}^{+*})
\end{align*}
where the fourth equality comes from \cite[Proposition 3.18]{Zhou90} and the fifth equality comes from \cite[(3.17)]{Zhou90}. Notice that 
\begin{align}
\int \overline{\mu^-(l; x,y)}\mu^+(l; x,y)dl=0
\end{align}
we therefore deduce
\begin{align}
    \int \overline{\mu^-(l; x,y)}\mu^+(l; x,y)dl &= \int  \overline{\mu^-(l; x,y)} (I+\mathcal{F}_{x,y})\mu^-(l; x,y)dl\\
    \nonumber
        &=\int\overline{(I+\calT^{+*}_{x,y})}\overline{\mu^-(l; x,y)}(I-\widetilde{\calT}^-_{x,y})\mu^-(l; x,y) dl\\
        \nonumber
        &= \Norm{(I+\calT^{+*}_{x,y})\mu^-(\cdot; x,y)}{L^2}^2\\
        \nonumber
        &=0
\end{align}
where we make use of the fact that
$$(I-\calT^-_{x,y})^{-1}=I+ \calT^{+*}_{x,y} =I-\widetilde{\calT}^-_{x,y}.$$
\end{proof}

\subsection{The Nonlocal Riemann-Hilbert Problem}
\label{subsec:NLRHP.def}

We now proceed to derive the nonlocal Riemann-Hilbert problem for the inverse problem. We first define two integral operators:
\begin{align}
\label{CT.left.time0}
C_{T_{x,y}}&:= C_+ \calT^-_{x,y}+C_- \calT^+_{x,y},\\
\label{CT.right.time0}
C_{R_{x,y}}&:= C_+ \calR^-_{x,y}+C_- \calR^+_{x,y}.
\end{align}

\begin{proposition}\cite[Proposition 4.10]{Zhou90}
A vector $\mu^l$ respectively $u^r$ is a fundamental solution of the Riemann-Hilbert problem 
\begin{align*}
\mu^\pm &= (I\pm \calT^\pm_{x,y})\mu^l,\\
\mu^\pm &= (I\pm \calR^\pm_{x,y})\mu^r
\end{align*}
respectively if and only if it satisfies the equations
\begin{align*}
\mu^l&=1+C_{T_{x,y}}\mu^l,\\
\mu^r&=1+C_{R_{x,y}}\mu^r
\end{align*}
respectively and the homogeneous solutions of these equations are exactly the vanishing solutions of the Riemann-Hilbert problem.
\end{proposition}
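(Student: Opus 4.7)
The plan is to exploit the Plemelj identity $C_+ - C_- = I$ for the Cauchy projectors on $L^2(\R_k)$, together with the fact that for every $f \in L^2(\R)$, $C_+ f$ is the boundary value on $\R$ of a function analytic in $\imag k > 0$ vanishing at infinity, and $C_- f$ is the boundary value of one analytic in $\imag k < 0$. I will interpret a \emph{fundamental solution} of the Riemann-Hilbert problem as a pair $\mu^\pm$ with $\mu^\pm - 1$ lying in the appropriate Hardy space, so that in particular $C_-(\mu^+ - 1) = 0$ and $C_+(\mu^- - 1) = 0$. I will write out the argument for $\mu^l$ only; the proof for $\mu^r$ follows by the substitutions $T^\pm \mapsto R^\pm$ and $\calT^\pm_{x,y} \mapsto \calR^\pm_{x,y}$.

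For the forward implication, starting from $\mu^\pm = (I \pm \calT^\pm_{x,y})\mu^l$ I subtract $1$ to rewrite the relation as $(\mu^l - 1) \pm \calT^\pm_{x,y}\mu^l = \mu^\pm - 1$, then apply $C_-$ to the plus equation and $C_+$ to the minus equation. The Hardy-space annihilation of $\mu^\pm - 1$ gives
\begin{equation*}
    C_+(\mu^l - 1) = C_+ \calT^-_{x,y}\mu^l, \qquad C_-(\mu^l - 1) = -C_-\calT^+_{x,y}\mu^l,
\end{equation*}
and taking the difference of these two identities while invoking $C_+ - C_- = I$ produces $\mu^l - 1 = C_+\calT^-_{x,y}\mu^l + C_-\calT^+_{x,y}\mu^l = C_{T_{x,y}}\mu^l$, as required.

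For the converse, given $\mu^l = 1 + C_{T_{x,y}}\mu^l$, I define $\mu^\pm := (I \pm \calT^\pm_{x,y})\mu^l$ and substitute the integral equation into $\mu^\pm - 1$. Using $I = C_+ - C_-$ to rewrite the unprojected $\pm \calT^\pm_{x,y}\mu^l$ terms, the algebra should telescope to
\begin{align*}
    \mu^+ - 1 &= C_+(\calT^-_{x,y} + \calT^+_{x,y})\mu^l, \\
    \mu^- - 1 &= C_-(\calT^-_{x,y} + \calT^+_{x,y})\mu^l,
\end{align*}
which place $\mu^\pm - 1$ in the required Hardy spaces automatically. The statement about homogeneous solutions will then follow by running the identical argument with the source term $1$ replaced by $0$: the correspondence is linear, so a nontrivial solution of $\mu^l = C_{T_{x,y}}\mu^l$ matches a nontrivial vanishing solution of the Riemann-Hilbert problem and vice versa.

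The one point that requires care is the tacit requirement that $\calT^\pm_{x,y}\mu^l \in L^2(\R_k)$ throughout, so that the Cauchy projectors act in $L^2$ and the Hardy-space identifications of $C_\pm f$ are unambiguous. This boundedness is exactly what the $L^2$-type bounds on the scattering data $T^\pm$ and on the operators $\calT^\pm_{x,y}$ developed in Section~\ref{sec:Tpm} under hypotheses \eqref{u.C1}--\eqref{u.C3} are designed to supply, so the proposition itself reduces to the algebraic projector identities above, which is the easy part.
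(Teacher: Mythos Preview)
Your proof is correct and is essentially the same argument as the paper's: the paper computes the jump $\mu^+-\mu^-=(\calT^+_{x,y}+\calT^-_{x,y})\mu^l$, invokes Plemelj's formula with the normalization $\mu^\pm\to 1$ to obtain $\mu^\pm=1+C_\pm\bigl[(\calT^+_{x,y}+\calT^-_{x,y})\mu^l\bigr]$, and then substitutes $\mu^+=(I+\calT^+_{x,y})\mu^l$ to isolate $\mu^l=1+C_{T_{x,y}}\mu^l$. Your version simply unpacks the Plemelj step by applying $C_\mp$ to each relation separately and using the Hardy-space annihilations $C_\mp(\mu^\pm-1)=0$ together with $C_+-C_-=I$; the algebra and the underlying idea are identical, and your converse direction reproduces exactly the formulas \eqref{mu-int-1}--\eqref{mu-int-2} of the paper.
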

\begin{proof}
Notice that
\begin{align*}
\mu^+-\mu^-=\calT^+_{x,y}\mu^l+\calT^-_{x,y}\mu^l
\end{align*}
hence taking into account the normalization condition, we obtain from Plemelj's formula
\begin{equation}
\label{mu-int-1}
\mu^\pm=1+ C_\pm \left[ \calT^+_{x,y}\mu^l+\calT^-_{x,y}\mu^l \right]
\end{equation}
\begin{equation}
\label{mu-int-2}
\mu^\pm=1+ C_\pm \left[ \calR^+_{x,y}\mu^r+\calR^-_{x,y}\mu^r\right],
\end{equation}
and in particular
\begin{align*}
\mu^+&=1+C_+ \left[ \calT^+_{x,y}\mu^l+\calT^-_{x,y}\mu^l \right]\\
   &\Rightarrow (I+\calT^+_{x,y})\mu^l = 1+\calT^+_{x,y}\mu^l+ C_+\calT^-_{x,y}\mu^l+ C_-\calT^+_{x,y}\mu^l\\
   &\Rightarrow \mu^l=1+C_{T_{x,y}}\mu^l.
\end{align*}
We have similarly
$$\mu^r=1+C_{R_{x,y}}\mu^r.$$
\end{proof}

The following proposition deal with the solvability of the nonlocal RHP:
\begin{proposition}\cite[Proposition 4.13]{Zhou90}
The norm of the operator $\left( I-C_{T_{x,y}} \right)^{-1}$ (respectively $\left( I-C_{R_{x,y}} \right)^{-1}$), for a fixed $y_0$ is bounded by a number $M_{y_0}$ independent of $x$ and $y$ for large $|x|+|y|$ with $y\leq y_0$ (respectively $y\geq y_0$).
\end{proposition}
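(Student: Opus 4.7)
The plan is to combine Fredholm theory, the uniqueness result just established, and an oscillatory-decay argument for the kernel of $C_{T_{x,y}}$ at spatial infinity. I will treat $C_{T_{x,y}}$ on $L^2_k(\R)$; the argument for $C_{R_{x,y}}$ is parallel.

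First I would verify that $C_{T_{x,y}}:L^2_k\to L^2_k$ is compact and depends continuously on $(x,y)$ in the operator norm. Compactness follows from the estimates on $T^\pm$ developed in \S\ref{sec:Tpm}, which show that $\calT^\pm_{x,y}$ are Hilbert--Schmidt, combined with the boundedness of the Cauchy projectors $C_\pm$. Continuity in $(x,y)$ follows from dominated convergence applied to the oscillatory kernel $T^\pm(k,l)e^{i(l-k)x-i(k^2-l^2)y}$, using the Hilbert--Schmidt norm as an $L^2_{k,l}$ majorant. Together with the preceding proposition, which shows that the only $L^2$ solution of the homogeneous Riemann--Hilbert problem is zero, the Fredholm alternative then yields that $I-C_{T_{x,y}}$ is invertible for every $(x,y)$, and continuity in $(x,y)$ of the operator carries over (via the second resolvent identity) to continuity of $(x,y)\mapsto (I-C_{T_{x,y}})^{-1}$ in the operator norm.

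Next I would fix $y_0$ and pass to the decay-at-infinity argument. Writing the phase in $\calT^+_{x,y}$ as $(l-k)\bigl(x+(l+k)y\bigr)$ and recalling that $T^+(k,l)$ is supported on $l>k$ (and analogously for $T^-$), a non-stationary-phase / Riemann--Lebesgue estimate combined with the regularity of $T^\pm$ obtained in \S\ref{sec:Tpm} shows that
\begin{equation*}
\lim_{\substack{|x|+|y|\to\infty\\ y\leq y_0}}\Norm{C_{T_{x,y}}}{L^2_k\to L^2_k}=0.
\end{equation*}
The key point is that for $y\leq y_0$ the stationary set of the phase is confined to a bounded region in $(k,l)$, off of which integration by parts in $l$ (using $\partial_l T^\pm\in L^2$) yields decay in $|x|+|y|$; on the bounded stationary region one exploits the localization of $T^\pm$ in that variable together with standard stationary-phase $o(1)$ estimates. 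Consequently, outside a sufficiently large compact set $K_{y_0}\subset\{y\leq y_0\}$, the Neumann series gives $\Norm{(I-C_{T_{x,y}})^{-1}}{}\leq 2$.

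Finally I would patch the two regions: on the compact set $K_{y_0}$, step one already provides a continuous, hence bounded, resolvent, yielding a bound $M'_{y_0}$; outside $K_{y_0}$, step two gives the bound $2$. Setting $M_{y_0}=\max(M'_{y_0},2)$ completes the proof. The main obstacle is step two: making the Riemann--Lebesgue decay of $C_{T_{x,y}}$ uniform in $(x,y)$ within the half-plane $y\leq y_0$ (and similarly $y\geq y_0$ for $C_{R_{x,y}}$) requires controlling the oscillatory kernel both away from and near the stationary set of the phase, using the integrability and smoothness of $T^\pm$ established earlier in the paper.
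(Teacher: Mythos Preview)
Your overall strategy---Fredholm alternative on compacta plus small-norm Neumann series near infinity---is sound, but the key claim in step two fails: it is \emph{not} true that $\Norm{C_{T_{x,y}}}{L^2_k\to L^2_k}\to 0$ as $|x|+|y|\to\infty$ with $y\le y_0$. The Hilbert--Schmidt norm of $\calT^\pm_{x,y}$ is simply $\Norm{T^\pm}{L^2(\R^2)}$, independent of $(x,y)$, since the phase $e^{i(l-k)x-i(l^2-k^2)y}$ is unimodular. Composing with $C_\pm$ does not rescue this: for a rank-one kernel $T^+(k,l)=\phi(k)\psi(l)$ (supported on $l>k$) one has, at $y=0$, $\Norm{C_-\calT^+_{x,0}}{}=\Norm{\psi}{}\cdot\Norm{C_-[\phi(\cdot)e^{-i(\cdot)x}]}{L^2}$, and an elementary Fourier computation shows the second factor tends to $\Norm{\phi}{}$, not $0$, as $x\to+\infty$. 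So no Riemann--Lebesgue or non-stationary-phase argument on the kernel alone can force the operator norm of $C_{T_{x,y}}$ below $1$.

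What makes the paper's (i.e.\ Zhou's) argument work is an algebraic identity you do not use. Because $I\pm\widetilde{\calT}^\pm_{x,y}=(I\pm\calT^\pm_{x,y})^{-1}$, one can write
\[
(I-C_{\widetilde{T}_{x,y}})(I-C_{T_{x,y}})=I+E_{T_{x,y}},\qquad
E_{T_{x,y}}=C_+\widetilde{\calT}^-_{x,y}C_-(\calT^+_{x,y}+\calT^-_{x,y})+C_-\widetilde{\calT}^+_{x,y}C_+(\calT^+_{x,y}+\calT^-_{x,y}),
\]
and similarly on the other side. The crucial point is that $E_{T_{x,y}}$ is a \emph{second-order} object in which the ``wrong'' Cauchy projector is sandwiched between two oscillatory Hilbert--Schmidt operators; it is this composite, not $C_{T_{x,y}}$ itself, that admits a rational-approximation/decay argument rendering $I+E_{T_{x,y}}$ invertible with uniform bounds for large $|x|+|y|$, $y\le y_0$. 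The resolvent is then built explicitly as $(I-C_{\widetilde{T}_{x,y}})(I+E_{T_{x,y}})^{-1}$. Your Fredholm-plus-continuity argument on compact sets is compatible with this, but to handle infinity you must replace the false decay claim for $C_{T_{x,y}}$ by the parametrix identity above.
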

\begin{proof}
We first consider the following identity:
\begin{equation}
\label{op:E}
\left( I-C_{\widetilde{T}_{x,y}} \right)\left( I-C_{T_{x,y}} \right)=I+E_{T_{x,y}}
\end{equation}
where 
$$ E_{T_{x,y}}= C_+{\widetilde{\calT}_{x,y}^-} C_-\left(\calT_{x,y}^+ + \calT_{x,y}^-  \right) +C_-{\widetilde{\calT}_{x,y}^+} C_+\left(\calT_{x,y}^+ + \calT_{x,y}^-  \right). $$
Note that
\begin{align*}
C_{\widetilde{T}_{x,y}}C_{T_{x,y}}=C_+{\widetilde{\calT}_{x,y}^-}\left( C_-\calT_{x,y}^+ + C_+\calT_{x,y}^-  \right)+C_-{\widetilde{\calT}_{x,y}^+}\left( C_-\calT_{x,y}^+ + C_+\calT_{x,y}^-  \right).
\end{align*}
We then make use of the fact that 
\begin{align*}
C_+\calT^-_{x,y}&=\calT^-_{x,y}+C_-\calT^-_{x,y},\\
C_-\calT^+_{x,y}&=-\calT^+_{x,y}+C_+\calT^-_{x,y}
\end{align*}
so that
\begin{align*}
\left( I-C_{\widetilde{T}_{x,y}} \right)&
\left( I-C_{T_{x,y}} \right)\\
	&= I-	C_+\widetilde{\calT}^-_{x,y}-
			C_-\widetilde{\calT}^+_{x,y}-
			C_+\calT^-_{x,y}-
			C_-\calT^+_{x,y}+
			C_{\widetilde{T}_{x,y}}C_{T_{x,y}}\\
	&= I-
			C_+\widetilde{\calT}^-_{x,y}-
			C_-\widetilde{\calT}^+_{x,y}-
			C_+\calT^-_{x,y}-C_-\calT^+_{x,y}\\
	&\quad + C_+{\widetilde{\calT}_{x,y}^-} 
			C_-\left(\calT_{x,y}^+ + \calT_{x,y}^-  \right) +
			C_-{\widetilde{\calT}_{x,y}^+} 
				C_+\left(
						\calT_{x,y}^+ + \calT_{x,y}^-  
					\right)\\
	&\quad +	C_+\widetilde{\calT}^-_{x,y}\calT^-_{x,y}-
			C_-\widetilde{\calT}^+_{x,y}\calT^+_{x,y}.
\end{align*}
Using the fact that $I\pm \widetilde{\calT}^\pm=(I\pm \calT^\pm)^{-1}$ we get the identity \eqref{op:E}. And using the rational approximation argument in the proof of \cite[Propostion 4.13]{Zhou90}, we deduce that 
$$\left( I+E_{T_{x,y}} \right)^{-1}\left( I-C_{\widetilde{T}_{x,y}} \right)\left( I-C_{T_{x,y}} \right)=I.$$
We then repeat the same argument on the following 
\begin{equation}
\label{op:E.bis}
\left( I-C_{T_{x,y}} \right)\left(I-C_{\widetilde{T}_{x,y}}\right)=I+\widetilde{E}_{T_{x,y}}
\end{equation}
to obtain 
$$\left( I-C_{T_{x,y}} \right)\left(I-C_{\widetilde{T}_{x,y}}\right)\left( I+\widetilde{E}_{T_{x,y}}\right)^{-1}=I$$
thus we construct the resolvent operator $\left( I-C_{T_{x,y}} \right)^{-1}$ and the bound follows from the compactness of $C_{\widetilde{T}_{x,y}}$.
\end{proof}

In \cite[(4.27)]{Zhou90}, the reconstruction of the potential $u$ is given by
\begin{equation}
u(x, y)=\frac{1}{\pi} \frac{\partial}{\partial x} \iint\left(T^{+}(k, l)+T^{-}(k, l)\right) e^{i(l-k) x-i\left(l^2-k^2\right) y} \mu^l(l, x ; y) d l d k.
\end{equation}
To derive this, we note that from \cite[Proposition 4.1]{Zhou90} we have the following integral equation representation:
\begin{align}
\label{mu-int}
\mu^{l}(l; x,y)=1+\int \dfrac{\calT^+_{x,y}\mu^l}{k'-l+i0}dk'+\int \dfrac{\calT^-_{x,y}\mu^l}{k'-l-i0}dk',
\end{align}
then we let $l\to\infty$ and substitute \eqref{mu-int} into \eqref{mupm.eqn} to  obtain \cite[(4.27)]{Zhou90}. We then let $L\mu^l=0$ and use the integral representation in \eqref{mu-int-1}-\eqref{mu-int-2} to obtain
\begin{align*}
u(x, y)&=\frac{1}{\pi} \frac{\partial}{\partial x} \iint \left(T^{+}(k, l)+T^{-}(k, l)\right) e^{i(l-k) x-i\left(l^2-k^2\right) y} \mu^l(l, x ; y)d l d k\\
&=\frac{1}{\pi} \frac{\partial}{\partial x} \iint\left(R^{+}(k, l)+R^{-}(k, l)\right) e^{i(l-k) x-i\left(l^2-k^2\right) y} \mu^r(l, x ; y) d l d k.
\end{align*}

\section{Estimates on Scattering Data}
\label{sec:Tpm}

We consider scattering data
\begin{align}
	\label{Tpm}
		T^\pm(k,l) 
		&= -\frac{i}{2\pi} 
			\int e^{-i((l-k)x - (l^2-k^2)y)}
				u(x,y)\mu^\pm(k,x;y) \, dx \, dy\\
		&= -\frac{i}{2\pi}
			\int e^{i(l^2-k^2)y}
				(\tu*\tmu^\pm)(k,l-k;y)
			\, dy
		\nonumber
		\end{align}
where
$$ \tu(l;y) = \frac{1}{\sqrt{2\pi}}\int e^{-ilx} u(x,y) \, dx$$
and
$$ (\tu*\tmu^\pm)(k,l;y) = \int \tu(l-l';y) \tmu^\pm(k,l';y) dl'\, dy.$$ 
In this section, we omit the Heaviside function $H(\pm(l-k))$ from the definition of $T^\pm$ for the present purpose of obtaining both weighted and derivative estimates. The integral \eqref{Tpm} still makes sense for $\pm (l-k) < 0$. 
As in section \ref{sec:mupm}, we write
$$ \tmu^\pm(k,l;y) = \sqrt{2\pi} \delta(l) + \tmu^\pm_\sharp(k,l;y)$$
and we will routinely write
\begin{equation}
	\label{Tpm.split}
		T^\pm(k,l) = T_1(k,l) + T_2^\pm(k,l)
\end{equation}
where
\begin{align}
	\label{T1.def}
		T_1(k,l) &= -\frac{i}{2\pi}\int e^{i(l^2-k^2)y} \tu(l-k;y) \, dy,\\
	\intertext{and}
		\label{T2.def}
		T_2^\pm(k,l) &= -\frac{i}{2\pi} 
			\int e^{i(l^2-k^2)y} (\tu*\tmu^\pm_\sharp)(k,l-k;y) \,dy	.
\end{align}
Note that $T_1$ corresponds to the linearization of the scattering transform at $u=0$ (where $\mu^\pm = 1$), so the nonlinearity of the scattering transform is encoded in $T^\pm_2$. 

First we prove a basic boundedness and continuity result for the map $u \mapsto T^\pm$ analogous to Lemma \ref{lemma:tmu.exist} for the scattering solutions. Denote by $B$ the set of $\tu$ with $\norm[L^1]{\tu} < \sqrt{2\pi}$ and $\tu \in L^2_yL^{2,-1}_l$. Recall that $c \coloneqq \norm[L^1]{\tu}/\sqrt{2\pi}$.

\begin{proposition}
	\label{prop:Tpm.exist}
	The maps
	\begin{equation}
		\label{Tpm.map}
		\begin{aligned}
			\tu &\mapsto T^\pm,\\
			B	 &\mapsto L^2(\R^2)	
		\end{aligned}
	\end{equation}
	are bounded continuous maps with 
	\begin{equation}
		\label{T.bdd}
		\norm[L^2]{T^\pm} \leq  \frac{1}{1-c}\norm[L^2_yL^{2,-1}_l]{\tu}
	\end{equation}
	and
	\begin{equation}
		\label{T.cont}
			\norm[L^2]{T^\pm(u_1) - T^\pm(u_2)}
				\leq C(c_1,c_2) \left( \norm[L^2_y L^{2,-1}_l]{\tu_1 - \tu_2}+\norm[L^1]{\tu_1 - \tu_2} \right)
	\end{equation}
	where $C(c_1,c_2)$ diverges as $c_1 \uparrow 1$ or $c_2 \uparrow 1$. 
\end{proposition}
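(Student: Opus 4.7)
The plan is to bound $T^\pm$ using the decomposition $T^\pm = T_1 + T_2^\pm$ introduced in \eqref{Tpm.split}, and then to reduce the continuity of $\tu\mapsto T^\pm$ to the same estimates applied to appropriate differences.

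For the linear piece $T_1$, I would change variables $l\mapsto k+l$ to rewrite
\begin{equation*}
T_1(k,k+l) = -\frac{i}{\sqrt{2\pi}}\int e^{il(l+2k)y}\tu(l;y)\,dy,
\end{equation*}
which is (up to a constant) exactly the operator $I$ in \eqref{Iv.bd} applied to the $k$-independent function $v=\tu$. The bound \eqref{Iv.bd} therefore applies directly and gives $\norm[L^2(\R^2)]{T_1}\lesssim \norm[L^2_yL^{2,-1}_l]{\tu}$.

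For the nonlinear piece $T_2^\pm$, the same change of variables gives
\begin{equation*}
T_2^\pm(k,k+l) = -\frac{i}{2\pi}\int e^{il(l+2k)y}(\tu*\tmu^\pm_\sharp)(k,l;y)\,dy,
\end{equation*}
whose integrand now depends on $k$ through $\tmu^\pm_\sharp$, so \eqref{Iv.bd} is not directly applicable. I would apply Minkowski's integral inequality in $y$, and then Young's convolution inequality in $l$ (fiberwise in $k$ and $y$), yielding
\begin{equation*}
\norm[L^2(\R^2)]{T_2^\pm} \leq \frac{1}{2\pi}\int \norm[L^1_l]{\tu(\cdot;y)}\,\norm[L^2(dk\,dl)]{\tmu^\pm_\sharp(\cdot,\cdot;y)}\,dy \lesssim \norm[L^1(\R^2)]{\tu}\,\norm[X]{\tmu^\pm_\sharp}.
\end{equation*}
Invoking \eqref{tmu.est0.X} from Lemma \ref{lemma:tmu.exist} gives $\norm[X]{\tmu^\pm_\sharp}\lesssim (1-c)^{-1}\norm[L^2_yL^{2,-1}_l]{\tu}$, and using $\norm[L^1]{\tu}=c\sqrt{2\pi}<\sqrt{2\pi}$ produces $\norm[L^2]{T_2^\pm}\lesssim c(1-c)^{-1}\norm[L^2_yL^{2,-1}_l]{\tu}$. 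Summing the $T_1$ and $T_2^\pm$ bounds gives \eqref{T.bdd}.

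For the continuity estimate \eqref{T.cont}, I would write the telescoping decomposition
\begin{equation*}
T^\pm(u_1)-T^\pm(u_2) = A_1 + A_2 + A_3,
\end{equation*}
where $A_1$ is the linear contribution with $\tu_1-\tu_2$ in place of $\tu$, $A_2$ comes from $(\tu_1-\tu_2)*\tmu^\pm_\sharp(u_1)$, and $A_3$ comes from $\tu_2*(\tmu^\pm_\sharp(u_1)-\tmu^\pm_\sharp(u_2))$. The bound on $A_1$ is exactly the $T_1$ estimate applied to the difference, contributing $\norm[L^2_yL^{2,-1}_l]{\tu_1-\tu_2}$; the bound on $A_2$ reuses the Minkowski/Young argument and \eqref{tmu.est0.X} to give $(1-c_1)^{-1}\norm[L^1]{\tu_1-\tu_2}\norm[L^2_yL^{2,-1}_l]{\tu_1}$; and the bound on $A_3$ gives $\norm[L^1]{\tu_2}\,\norm[X]{\tmu^\pm_\sharp(u_1)-\tmu^\pm_\sharp(u_2)}$, which by the continuity estimate \eqref{tmu.cont.X} in Lemma \ref{lemma:tmu.exist} is controlled by $(1-c_1)^{-1}(1-c_2)^{-1}\norm[L^1]{\tu_1-\tu_2}\,\norm[L^1]{\tu_2}$. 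Adding the three contributions yields \eqref{T.cont} with a prefactor $C(c_1,c_2)$ that diverges as either $c_i\uparrow 1$.

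The main obstacle is conceptual rather than computational: because $\tmu^\pm_\sharp$ genuinely depends on $k$, the sharp estimate \eqref{Iv.bd} cannot be invoked verbatim on the nonlinear term. The Minkowski-in-$y$/Young-in-$l$ replacement resolves this at the cost of replacing the weighted $L^2$-norm of $\tu$ by $\norm[L^1]{\tu}$, but the latter is precisely the quantity controlled by the small-data hypothesis \eqref{u.C1}, so the overall $(1-c)^{-1}$ structure of the bound is preserved.
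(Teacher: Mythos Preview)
Your proposal is correct and follows essentially the same route as the paper's own proof: the decomposition $T^\pm=T_1+T_2^\pm$, the change of variables $l\mapsto k+l$ together with \eqref{Iv.bd} for $T_1$, Minkowski-in-$y$ plus Young-in-$l$ combined with \eqref{tmu.est0.X} for $T_2^\pm$, and the same telescoping for the continuity estimate (your $A_1,A_2,A_3$ matches the paper's splitting of $T_1(u_1)-T_1(u_2)$ and $T_2^\pm(u_1)-T_2^\pm(u_2)$). The only slip is the constant in front of the $T_1$ integral, which should be $-\tfrac{i}{2\pi}$ rather than $-\tfrac{i}{\sqrt{2\pi}}$, but this does not affect the argument.
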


\begin{proof}
	Writing $T^\pm = T_1 + T_2^\pm$ as in \eqref{Tpm.split} we have, using \eqref{Iv.bd} for the estimate on $T_1$ and Young's inequality together with \eqref{tmu.est0.X} for the estimate on $T^\pm_2$, that
	\begin{align}
	\label{T1pm.L2.est}
		\norm[L^2(\R^2)]{T_1}
			&\leq \frac{1}{2\pi^\frac12} \norm[L^2_y L^{2,-1}_l]{\tu},\\
	\label{T2pm.L2.est}
		\norm[L^2(\R^2)]{T_2^\pm}
			&\leq \frac{1}{2\pi} \norm[L^1]{\tu} \norm[X]{\tmu^\pm_\sharp}\\
			&\leq \frac{1}{2\pi^\frac12} \frac{c}{1-c} \norm[L^2_y L^{2,-1}_l]{\tu}
			\nonumber
	\end{align}
	which implies \eqref{T.bdd}. 
	
	Next, from the estimates
	\begin{align*}
		\norm[L^2]{T_1(u_1) - T_1(u_2)}
			&\lesssim \norm[L^2_y L^{2,-1}_l]{\tu_1 - \tu_2},\\
		\norm[L^2]{T_2^\pm(u_1) - T_2^\pm(u_2)}
			&\lesssim \norm[L^1]{\tu_1 - \tu_2} \norm[X]{\tmu^\pm_\sharp(u_1)} +
						\norm[L^1]{\tu_2} \norm[X]{\tmu^\pm_\sharp(u_1) - \tmu^\pm_\sharp(u_2)}\\
			&\lesssim \norm[L^1]{\tu_1 - \tu_2} (1-c_1)^{-1}\norm[L^2_y L^{2,-1}_l]{u_1}\\
			&\quad +
				(1-c_1)^{-1} (1-c_2)^{-1} \norm[L^1]{\tu_1 - \tu_2},
	\end{align*}
	we recover \eqref{T.cont}.	
\end{proof}

Next, we prove an analogue of Zhou's result \cite[Proposition 3.7]{Zhou90}. Our method of proof is similar to his, and our only contribution is to keep careful track of what norms of $\tu$ are required for results of a given order of regularity.

\begin{proposition}
	\label{prop:Tpm.L2}
	Suppose that $\norm[L^1(\R^2)]{\tu} < \sqrt{2\pi}$.
	Fix nonnegative integers $j$ and $j'$ and suppose that for $0 \leq m \leq j$ and $0 \leq m' \leq j'$,
	\begin{align}
		\label{L2.Tpm.u.C1}
			l^m \diff_y^{m'} \tu & \in L^2_y L^{2,-1}_l \cap L^1(\R^2).
	\end{align}	
	Then 
	\begin{equation}
		\label{L2.Tpm.wt.est}
			(1+(l-k)^2)^j (1+(l^2-k^2)^2)^{j'} T^\pm(k,l) \in L^2(\R^2).
	\end{equation}
\end{proposition}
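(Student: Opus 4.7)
The plan is to split $T^\pm = T_1 + T_2^\pm$ as in \eqref{Tpm.split} and bound the weighted version of each piece separately in $L^2_{k,l}$. After expanding the weight $(1+(l-k)^2)^j(1+(l^2-k^2)^2)^{j'}$ as a sum of monomials, it suffices to bound $\|(l-k)^m(l^2-k^2)^{m'}T^\pm\|_{L^2_{k,l}}$ for each relevant pair $(m,m')$. The key mechanism is that $(l-k)^m$ passes inside the defining integrals via a binomial expansion on the convolution (in the spirit of \eqref{gku.l}), while $(l^2-k^2)^{m'}$ is traded for $y$-derivatives using $(l^2-k^2)^{m'}e^{i(l^2-k^2)y} = (-i\diff_y)^{m'} e^{i(l^2-k^2)y}$ and integration by parts --- or, equivalently, via the tempered-distribution identity $q^{m'}\widehat{f}(q) = (-i)^{m'}\widehat{\diff_y^{m'}f}(q)$, which dispenses with boundary terms at $y = \pm\infty$ altogether.

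For $T_1$, which equals $-\frac{i}{2\pi}I(\tu)(l-k,k)$ in the notation of \eqref{Iv.bd}, this reduces $(l-k)^m(l^2-k^2)^{m'}T_1$ to an integral of the form $I(l^m\diff_y^{m'}\tu)(l-k,k)$, and \eqref{Iv.bd} gives $\|(l-k)^m(l^2-k^2)^{m'}T_1\|_{L^2_{k,l}} \lesssim \|l^m\diff_y^{m'}\tu\|_{L^2_yL^{2,-1}_l}$, which is finite by \eqref{L2.Tpm.u.C1}. For $T_2^\pm$, applying the same manipulations to $(\tu*\tmu^\pm_\sharp)(k,l-k;y)$ together with the Leibniz rule for $\diff_y^{m'}$ on the convolution (cf.\ \eqref{gku.y}) writes $(l-k)^m(l^2-k^2)^{m'}T_2^\pm$ as a finite linear combination, over $0 \le r \le m$ and $0 \le r' \le m'$, of integrals
$$\int e^{i(l^2-k^2)y} \bigl[(l^{m-r}\diff_y^{m'-r'}\tu) * (l^r \diff_y^{r'}\tmu^\pm_\sharp)\bigr](k, l-k; y)\, dy.$$
Each such term is controlled in $L^2_{k,l}$ by Minkowski's inequality in $y$ followed by Young's inequality in $l$ --- the same two-step estimate used in the proof of \eqref{T2pm.L2.est} --- yielding the bound $\|l^{m-r}\diff_y^{m'-r'}\tu\|_{L^1(\R^2)} \cdot \|l^r\diff_y^{r'}\tmu^\pm_\sharp\|_X$. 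Hypothesis \eqref{L2.Tpm.u.C1} supplies the $L^1$ factors directly, and Lemma \ref{lemma:tmu.smooth} --- whose hypothesis \eqref{u.smooth.X.C1} is precisely \eqref{L2.Tpm.u.C1} --- supplies the $X$-bounds on $l^r\diff_y^{r'}\tmu^\pm_\sharp$ for the full range of indices required.

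The main point requiring care is the justification of the $y$-derivative manipulations in the $T_2^\pm$ piece, where naively one might worry about boundary contributions from integration by parts against the convolution $\tu*\tmu^\pm_\sharp$. This is cleanest to handle via the distributional Fourier interpretation of $(l^2-k^2)^{m'}$ as a $y$-multiplier described above, for which the only requirement is that the relevant $y$-derivatives of $\tu*\tmu^\pm_\sharp$ define tempered distributions --- a condition guaranteed by \eqref{L2.Tpm.u.C1} together with the $X$-bounds of Lemma \ref{lemma:tmu.smooth}.
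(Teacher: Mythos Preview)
Your proof is correct and follows essentially the same approach as the paper's: split $T^\pm = T_1 + T_2^\pm$, trade the weight $(l-k)^m(l^2-k^2)^{m'}$ for $l^m\diff_y^{m'}$ acting on the integrand, use \eqref{Iv.bd} for $T_1$, and for $T_2^\pm$ distribute the $l$-power and $y$-derivatives across the convolution via the Leibniz/binomial rules and estimate by $\norm[L^1]{l^{m-r}\diff_y^{m'-r'}\tu}\norm[X]{l^r\diff_y^{r'}\tmu^\pm_\sharp}$ with Lemma~\ref{lemma:tmu.smooth} supplying the $X$-bounds. The only cosmetic difference is that the paper makes the change of variables $(k,l)\to(k,l+k)$ explicit (so that $(l-k)\to l$ and $(l^2-k^2)\to l(l+2k)$) and works first with Schwartz $u$ before passing to the limit, whereas you describe the same computation in the original variables and invoke the tempered-distribution interpretation of the Fourier multiplier directly.
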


\begin{proof}
	We use the decomposition \eqref{Tpm.split} and change variables from $(k,l)$ to $(k,l+k)$. First, for $u \in \calS(\R^2)$, 
	\begin{align*}
		l^{m} &(l(l+2k))^{m'}T_1(k,k+l)
			\\
			 &=-\frac{i}{2\pi} l^m (l(l+2k))^{m'}\int e^{il(l+2k)y} \tu(l,y) \, dy\\
			 &=-\frac{i}{2\pi} (-i)^{m'} \int e^{il(l+2k)y} l^m \diff_y^{m'}\tu(l;y) \, dy
	\end{align*}
	from which it follows that 
	$$ \norm[L^2(\R^2)]{l^m (l(l+2k))^{m'} T_1(k,k+l)} \lesssim \norm[L^2_y L^{2,-1}_l]{l^m \diff_y^{m'}\tu }$$
	giving
	$$ \norm[L^2(\R^2)]{(l-k)^m (l^2-k^2)^{m'} T_1} \lesssim \norm[L^2_y L^{2,-1}_l]{l^m \diff_y^{m'}\tu }.$$
	
	Next, again assuming $u \in \calS(\R^2)$, we compute
	\begin{align*}
		l^m & (l(l+2k))^{m'}T_2^\pm(k,k+l)\\
			&=-\frac{i}{2\pi} \int (-i \diff_y)^m e^{il(l+2k)y} l^m (\tu*\tmu^\pm_\sharp)(k,l;y) \, dy\\
			&=-\frac{i}{2\pi} (i)^m 
				\int e^{il(l+2k)y} l^m \diff_y^{m'} (\tu*\tmu^\pm_\sharp)(k,l;y) \, dy.
	\end{align*} 
	Using the expansion
	\begin{align*}
		l^m \diff_y^{m'}(\tu*\tmu^\pm_\sharp)(k,l;y)
			&= \sum_{\substack{0 \leq r \leq m\\0 \leq r' \leq m'}}c_{r,r'}(\tu_{m-r,m'-r'} *  \tmu^\pm_{\sharp,r,r'})(k,l;y) 
	\end{align*}
	where
	$$
		c_{r,r'} = \binom{m}{r} \binom{m'}{r'}, \qquad \tu_{r,r'} = l^r \diff_y^{r'} \tu, \qquad 
		\tmu^\pm_{\sharp,r,r'} = l^r \diff_y^{r'} \tmu^\pm_\sharp
	$$
	we see that
	\begin{multline*}
	\norm[L^2]{l^m (l(l+2k))^{m'} T_2^\pm(k,k+l)}\\
	\lesssim 
		\sum_{\substack{0 \leq r \leq m \\ 0 \leq r' \leq m'}} 
			c_{r,r'} 
				\norm[L^1(\R^2)]{\tu_{m-r,m'-r'}}
				\norm[X]{\tmu^\pm_{\sharp,r,r'} }
	\end{multline*}
	and use the fact that $\norm[X]{\tmu^\pm_{\sharp,r,r'}}$ is bounded by Lemma \ref{lemma:tmu.smooth}.
\end{proof}

We will also need the following derivative estimates and pointwise estimates on the scattering data $T^\pm$.  For the statement and proof of Proposition \ref{prop:Tpm.kl.L2}, we drop the factor of $H(\pm(l-k))$ in the definition of $T^\pm$, noting that the integrals in \eqref{scatt.Tpm.def} make sense for either sign of $l$ and define differentiable functions of $k$ and $l$ with the appropriate regularity.

\begin{proposition}
	\label{prop:Tpm.kl.L2}
		Suppose that \eqref{u.C1} and \eqref{u.C2.ancest} hold and that $u \in \mathbf{E}_{1,w}$.
        \begin{enumerate}[(i)]
            \item The estimate
            \begin{equation}
                \label{Tpm.est0}
				(l-k) T^\pm(k,l) \in L^1(\R^2),
            \end{equation}
            holds.
            \item Suppose that $a > \delta$ for a fixed $\delta>0$. Then
            \begin{align}
                \label{Tpm.est01}
                    \frac{T^\pm}{a+l^2} 
                        &\in L^1(\R^2),\\
                \label{Tpm.est02}
                    \frac{l(l-k)T^\pm}{(a+l^2)^2}
                        &\in L^1(\R^2),\\
                \intertext{and}
                \label{Tpm.est03}
                    \frac{(l-k)\diff_lT^\pm(k,l)}{a+l^2}
                        &\in L^1(\R^2).
            \end{align}
            \item For any compact subset $E$ of $\R^2$,
            \begin{equation}
                \label{Tpm.est04}
                    \norm[L^2(E)]{\frac{\diff^3 T^\pm}{\diff k \diff l^2}} \lesssim 1.
            \end{equation}
            \item For any $\psi \in C_0^\infty(\R^2)$, the quantities
            \begin{equation}
                \label{Tpm.est05}
                \begin{aligned}
                    \text{(a)   } & \psi T^\pm, & \qquad 
                    \text{(b)   } & \frac{\diff}{\diff k}(\psi T^\pm), \\
                    \text{(c)   } & \frac{\diff^3}{\diff k \diff^2 l}(\psi T^\pm), & \qquad 
                    \text{(d)   } & \frac{\diff^2}{\diff l^2}(\psi T^\pm),\\
                    \text{(e)   } & (l-k)\frac{\diff}{\diff k}(\psi T^\pm), & \qquad 
                    \text{(f)   } & \frac{\diff^2}{\diff l^2} 
                                    \left( (l-k) \frac{\diff}{\diff k}(\psi T^\pm) \right)
                \end{aligned}
            \end{equation}
            are all bounded in $L^2(\R^2)$.
        \end{enumerate}
\end{proposition}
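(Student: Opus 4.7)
The plan is to exploit the decomposition $T^\pm = T_1 + T_2^\pm$ from \eqref{Tpm.split} and bound the two pieces separately, using the $\mathbf{E}_{1,w}$ assumption on $u$ together with the control on $\tmu^\pm_\sharp$ established in Section \ref{sec:mupm}. For the $T_1$ piece, the key tool is the change of variables $(k,l) \mapsto (p,q)$ with $p = l-k$ and $q = l^2 - k^2 = p(2k+p)$, whose Jacobian is $dp\,dq = 2|p|\,dk\,dl$. Under this substitution, $T_1(k,l)$ becomes (up to a constant) the partial Fourier transform of $\tu$ in $y$, evaluated at $(p,-q)$. The $1/|p|$ from the Jacobian then cancels against the $(l-k)=p$ weight in (i), yielding $\int |(l-k)T_1|\,dk\,dl \lesssim \|\widehat{\tu}_y\|_{L^1(\R^2)}$, which is controlled by the $\mathbf{H}^{4,5}$ norm of $\tu$ via Cauchy--Schwarz with Sobolev weights. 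For the $T_2^\pm$ piece, Young's inequality together with Lemma \ref{lemma:tmu.exist} gives a baseline $L^2$ bound, and weighted versions follow by moving weights under the integral, either as factors on $\tu$ or, for $(l^2-k^2)$-type weights, by integration by parts in $y$ against the exponential $e^{i(l^2-k^2)y}$; the resulting norms of $l^r \partial_y^{r'} \tu$ and $l^r \partial_y^{r'} \tmu^\pm_\sharp$ are handled by Lemma \ref{lemma:tmu.smooth}.

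For part (ii), the hypothesis $a > \delta > 0$ enters through $\int (a+l^2)^{-n}\,dl \lesssim \delta^{-(n-1/2)}$, so the $l$-integration against these denominators is harmless. One then applies Cauchy--Schwarz in $k$ against the weighted $L^2$ bounds for $T^\pm$ from Proposition \ref{prop:Tpm.L2}, with enough weight in $(l-k)$ and $(l^2-k^2)$ to absorb any polynomial growth. For \eqref{Tpm.est03}, the factor $\partial_l T^\pm$ is computed by differentiating \eqref{T1.def}--\eqref{T2.def} under the integral, producing factors of $y$ from the exponential and shifts $\tu \mapsto l\tu$, all of which are controlled by the $L^{2,4}_x L^{2,5}_y$ and $(1+y^2)^2$ norms built into $\mathbf{H}^{4,5}$.

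For (iii) and (iv), I differentiate the integral representations \eqref{T1.def}--\eqref{T2.def} directly. A $\partial_k$ on $T_2^\pm$ hits either the exponential (producing a factor $y$, paired with the $y$-weighted norms of $\tu$ in $\mathbf{H}^{4,5}$) or $\tmu^\pm_\sharp$ (producing $\partial_k \tmu^\pm_\sharp$, bounded by $(1+|y|)$ via Proposition \ref{prop: mu-k}), and similarly for $\partial_l$. In (iv) the cutoff $\psi \in C_0^\infty$ localizes to a compact set in $(k,l)$, so any polynomial growth in $k$ or $l$ generated by differentiation is harmless; the required $L^2$ bounds then reduce, modulo Leibniz-type expansions, to Proposition \ref{prop:Tpm.L2} for the pure $l$-derivatives and to Proposition \ref{prop: mu-k} for the $k$-derivatives of $\tmu^\pm_\sharp$.

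The main obstacle is part (iii): tracking the mixed third-order derivative $\partial_k \partial_l^2 T^\pm$ generates a large number of terms when the derivatives are distributed among the exponential, the convolution kernel $\tu$, and $\tmu^\pm_\sharp$, and the $(1+|y|)$ growth coming from $\partial_k \tmu^\pm_\sharp$ must be absorbed by the $y$-weighted norms of $\tu$. This is precisely why the $(1+y^2)^2(1-\diff_x^2)^2$ and $L^{2,5}_y$ components of the $\mathbf{H}^{4,5}$ norm were included in the definition of $\mathbf{E}_{1,w}$; one checks that in each term of the Leibniz expansion, the combined $y$-decay from $\tu$ dominates the $y$-growth from $\partial_k \tmu^\pm_\sharp$, producing an integrable $y$-integral, and the local $L^2(E)$ bound follows.
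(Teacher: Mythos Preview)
Your plan is correct and tracks the paper's own proof closely: both rely on the splitting $T^\pm = T_1 + T_2^\pm$, reduce the $L^1$ statements in (i)--(ii) to weighted $L^2$ bounds on $T^\pm$ (Proposition~\ref{prop:Tpm.L2}) via Cauchy--Schwarz, compute $\partial_l T^\pm$ and $\partial_k\partial_l^2 T^\pm$ by differentiating under the integral in \eqref{T1.def}--\eqref{T2.def}, and absorb the $(1+|y|)$ growth of $\partial_k\tmu^\pm_\sharp$ from Proposition~\ref{prop: mu-k} into the $y$-weights on $\tu$ built into $\mathbf{E}_{1,w}$.

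Two small points where the paper is slightly more efficient or explicit than your sketch. First, for (i) the paper does \emph{not} split into $T_1$ and $T_2^\pm$ at all: it simply writes $(l-k)T^\pm = \dfrac{l-k}{(1+(l-k)^2)(1+(l^2-k^2)^2)}\cdot\bigl[(1+(l-k)^2)(1+(l^2-k^2)^2)T^\pm\bigr]$ and applies Cauchy--Schwarz in $\R^2$, invoking Proposition~\ref{prop:Tpm.L2} with $j=j'=1$ for the second factor. Your change-of-variables argument for $T_1$ is correct but unnecessary. Second, in \eqref{Tpm.est03} the paper reduces to showing $(l-k)\partial_l T^\pm \in L^2$ \emph{and} $(l-k)^2\partial_l T^\pm \in L^2$; in each case the term coming from differentiating the exponential carries a factor $2ly$, and the paper handles the unbounded $l$ by the integration-by-parts identity $l(l+2k)e^{il(l+2k)y} = -i\partial_y e^{il(l+2k)y}$ (after shifting $l\mapsto l+k$), trading $l$-growth for a $\partial_y$ on the convolution. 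Your sketch alludes to ``integration by parts in $y$'' only for the $(l^2-k^2)$ weights in Proposition~\ref{prop:Tpm.L2}; you should also invoke it here, since without it the $l$-growth in the differentiated exponential is not obviously controlled.
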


\begin{remark}
    \label{rem:Tpm.kl.L2}
    The hypotheses of Proposition \ref{prop:Tpm.kl.L2} imply that the following inclusions, used in the proof below, hold.
		\begin{enumerate}[(i)]
			\setlength{\itemsep}{5pt}
			\item $\tu,\dfrac{\diff \tu}{\diff y}, y \dfrac{\diff \tu}{\diff y} \in L^2_y L^{2,-1}_l(\R^2)$.
            \item $\tu \in L^{2,5}_yL^{2,4}(\R^2)$ and  $\dfrac{\diff \tu}{\diff y} \in L^{2,2}_y L^{2,2}_l(\R^2)$.
            \item $\dfrac{\diff \tu}{\diff l} \in L^{2,4}_y L^{2,3}_l(\R^2)$, $\dfrac{\diff^2 \tu}{\diff l^2} \in L^{2,3}_y L^{2}_l(\R^2)$, and 
            $\dfrac{\diff^3 \tu}{\diff l^3} \in L^{2,1}_y L^2_l(\R^2)$.
		\end{enumerate}
    It is easy to check that conditions (i)--(iii) above  imply \eqref{Tpm.est03.bd1.1.norms}, \eqref{Tpm.est03.bd1.needs}, \eqref{Tpm.est03.bd2.1.norms}, and \eqref{Tpm.est03.bd2.2.norms}, where the estimates on $\tmu^\pm_\sharp$ in the second line of \eqref{Tpm.est03.bd2.2.norms} follow from the above conditions and 
   Lemma \ref{lemma:tmu.smooth}. The estimates on ${\diff \tmu^\pm_\sharp}/{\diff k}$ needed in \eqref{TP.est04.T2.sub2} follow from the above conditions and Proposition \ref{prop: mu-k}. The norms of $\tu$ and its derivatives needed for \eqref{TP.est04.T1.bd} and \eqref{TP.est04.T2.bd} are also bounded by conditions (i)--(iii).
\end{remark}

\begin{proof}
	In most of the proof we will use the decomposition \eqref{Tpm.split} to estimate derivatives of $T_1$ and $T_2$ separately. We often use the change of variables $(k,l) \to (k,l+k)$ to simplify the estimates. 
	
	\bigskip
	
	(i): Estimate \eqref{Tpm.est0}  is a consequence of the identity
			\begin{multline*}
				(l-k) T^\pm(k,l) = \\
					\frac{l-k}{(1+(l-k)^2 (1+(l^2-k^2)^2)} (1+(l-k)^2)(1+(l^2-k^2)^2) T^\pm(k,l),
			\end{multline*}
		\eqref{L2.Tpm.wt.est}, and the Cauchy-Schwarz inequality.

    \medskip

    (ii): We prove each of \eqref{Tpm.est01}--\eqref{Tpm.est03} in turn. First,
    \begin{align*}
        \int &\frac{|T^+(k,l)|}{a+l^2} \, dl \, dk\\
            &\lesssim
                \int \frac{(1+(l-k)^2 \, |T^\pm(k,l)|}{(1+l^2)(1+(l-k)^2 }\ dl \, dk\\
            &\lesssim 
                \norm[L^2(\R^2)]{(1+(l-k)^2)T^\pm(k,l)} \, dl \, dk
    \end{align*}
    which is finite by Proposition \ref{prop:Tpm.L2} with $j=1$, $j'=0$.

    Second, noting that
    $$ \frac{|l|}{l^2+1} \lesssim 1,$$
    we may bound
    \begin{align*}
        \int \frac{|l(l-k)T^\pm(k,l)|}{(r^2+l^2)^2} \, dl \, dk
            &\lesssim
                \int |l-k| \, |T^\pm(k,l)| \, dl \, dk  \\
            &\lesssim \norm[L^2(\R^2)]{(1+(l-k)^2)(1+(l^2-k^2)^2)T^\pm}
    \end{align*}
    where we used the Schwarz inequality. The right-hand side is finite by Proposition \ref{prop:Tpm.L2} with $j=j'=1$.

    Third, we estimate
    \begin{align*}
        \int &\frac{|(l-k) \diff_lT^\pm(k,l)|}{a+l^2} \, dl \, dk \\
            & \int 
                \frac{|l-k|(1+(l-k)^2)}{(1+l^2)(1+(l-k)^2)}|\diff_l T^\pm(k,l)| \, dl \, dk\\
            &= I_1 + I_2
    \end{align*}
    where
    \begin{align*}
        I_1 &=  \int \frac{|l-k|}{(1+l^2)(1+(l-k)^2 )}  |\diff_l T^\pm(k,l)|\, dl \, dk,\\
        I_2 &=  \int \frac{|l-k| }{(1+l^2)(1+(l-k)^2) }|(l-k)^2 \diff_lT^\pm(k,l)| \, dl \, dk.
    \end{align*}
    By the Schwarz inequality, to bound $I_1$ and $I_2$, it suffices to show that
    \begin{align}
        \label{Tpm.est03.bound1}
        (l-k) \diff_lT^\pm  &\in L^2(\R^2),
        \intertext{and}
        \label{Tpm.est03.bound2}
        (l-k)^2 \diff_l T^\pm &\in L^2(\R^2).
    \end{align}
    We will use the decomposition \eqref{Tpm.split}.

    First, we prove \eqref{Tpm.est03.bound1}. We will bound in turn the terms involving $T_1$ and $T_2^\pm$. 
    
    To bound the term involving $T_1$, we compute
    \begin{align}
        \label{Tpm.est03.bd1.split}
        (l-k) \frac{\diff T_1}{\diff l}
            &=  (2\pi)^{-1} \int e^{i(l^2-k^2)y} 2(l-k)ly\tu(l-k;y) \, dy\\
            \nonumber
            &\quad -i(2\pi)^{-1} \int e^{i(l^2-k^2)y} (l-k) \frac{\diff \tu}{\diff l}(l-k;y) \, dy.
    \end{align}
    We will make the change of variables $l'=l-k$ and $k'=k$ and use the identity
    $$ \frac{1}{i} \frac{\diff}{\diff y}e^{il'(l'+2k)y}=l'(l'+2k') e^{il'(l'+2k')y }.$$
    After an integration by parts on the first right-hand term in \eqref{Tpm.est03.bd1.split}, we have (after dropping the primes)
    \begin{align}
        \label{Tpm.est03.bd1.1}
        l\frac{\diff T_1}{\diff l}(k,l+k)
            &=  i(2\pi)^{-1}
                    \int e^{il(l+2k)y} \tu(l;y) \, dy\\
            \nonumber
            &\quad  +i(2\pi)^{-1}\int e^{il(l+2k)y} \frac{\diff \tu}{\diff y}(l,y) \, dy\\
            \nonumber
            &\quad  +   (2\pi)^{-1} \int e^{il(l+2k)y} l^2 y \tu(l;y) \, dy\\
            \nonumber
            &\quad  +   \frac{i}{2\pi} \int  e^{il(l+2k)y} l \frac{\diff \tu}{\diff l}(l;y) \, dy
    \end{align}
    By \eqref{Tpm.est03.bd1.1}, Minkowski's integal inequality, and \eqref{Iv.bd}, to bound the $L^2_lL^2_k$ norm of $l \dfrac{\diff T_1}{\diff l}$, it suffices to have
    \begin{equation}
        \label{Tpm.est03.bd1.1.norms}
         \tu, \, \frac{\diff \tu}{\diff y}, \, l^2 y  \tu, \, l \frac{\diff \tu}{\diff l} \in L^2(|l|^{-1} \, dl \, dy). 
    \end{equation}

    To finish the proof of \eqref{Tpm.est03.bound1}, we bound the term in \eqref{Tpm.est03.bound1} involving $T^\pm_2$. We compute
    \begin{align}
        \label{Tpm.est03.bd1.split2.pre}
        (l-k)\frac{\diff T_2^\pm}{\diff l}(k,l)
            &=  (2\pi)^{-1} \int e^{i(l^2-k^2)y}2(l-k) ly(\tu*\tmu^\pm_\sharp)(k,l-k,;y) \, dy\\
            \nonumber
            &\quad  -
                i(2\pi)^{-1} \int e^{i(l^2-k^2)y}(l-k)
                    \left( \frac{\diff \tu}{\diff l}*\tmu^\pm_\sharp \right) (k,l-k;y) \, dy.
    \end{align}
    As before we make the change of variables $l'=l-k, \, k'=k$, integrate by parts and apply the product rule for convolutions to conclude that
    \begin{align}
        \label{Tpm.est03.bd1.split2}
        l \frac{\diff T_2^\pm}{\diff l}(k,l+k)
            &=  \int e^{il(l+2k)y} (S_1 + S_2+S_3)(k,l,y) \, dy
        \intertext{where}
        \nonumber
        S_1 &= \frac{i}{2\pi} 
                \left[
                    (\tu*\tmu^\pm_\sharp) + 
                    y\left( \frac{\diff \tu}{\diff y}*\tmu^\pm_\sharp\right) + 
                    y \left((\tu)*\frac{\diff \tmu^\pm_\sharp}{\diff y}\right) 
                \right],\\
        \nonumber
        S_2 &=  \frac{1}{2\pi}
                \left[
                    y((l^2\tu)*\mu^\pm_\sharp) +
                    2y((l \tu)*(l\mu^\pm_\sharp)) +
                    y (\tu*(l^2\tmu^\pm_\sharp))
                \right],
        \intertext{and}
        \nonumber
        S_3 &=  -\frac{i}{2\pi}
                \left[
                    \left(
                 \left(l\frac{\diff \tmu}{\diff l}\right)*\tmu^\pm_\sharp 
                    \right) +
                    \left(\frac{\diff \tu}{\diff l}*(l\tmu^\pm_\sharp) \right)
                \right].
    \end{align}
    Applying Minkowski's inequality on \eqref{Tpm.est03.bd1.split2} and Young's inequality for convolutions, we can bound the $L^2_k L^2_l$ norm of \eqref{Tpm.est03.bd1.split2} provided 
    \begin{equation}
    \label{Tpm.est03.bd1.needs}
    \begin{aligned}
            \tu, \, y \diff_y \tu, \, l^2 y \tu, \, ly\tu, \, y\tu, \, l\diff_l \tu, \, \diff_l \tu &\in L^1(\R^2),\\
            \tmu^\pm_\sharp, \, l\tmu^\pm_\sharp, \, l^2 \tmu^\pm_\sharp, \, \diff_y \tmu^\pm_\sharp &\in X.
    \end{aligned}
    \end{equation}
    Hence, assumptions \eqref{Tpm.est03.bd1.1.norms} and \eqref{Tpm.est03.bd1.needs} allow us to show \eqref{Tpm.est03.bound1}.

    To finish the proof of \eqref{Tpm.est03}, we need to prove \eqref{Tpm.est03.bound2}. Again, we estimate terms involving $T_1$ and $T_2^\pm$ separately.

    First, we compute
    \begin{align}
        \label{Tpm.est03.bd2.1.pre}
            (l-k)^2 \frac{\diff T_1}{\diff l}(k,l)
                &= (2\pi)^{-1} \int e^{i(l^2-k^2)y}2(l-k)^2 ly \tu(l-k;y) \, dy\\
                \nonumber
                &\quad -\frac{i}{2\pi}\int e^{i(l^2-k^2)y}(l-k)^2 \frac{\diff \tu}{\diff l}(l-k;y) \, dy.
    \end{align}
    Making the same change of variables and integrating by parts as before, we conclude that
    \begin{align*}
        l^2 \frac{\diff T_1}{\diff l}(k,l+k)
            &= \int e^{il(l+2k)y}(S_1+S_2) \, dy
    \intertext{where}
    S_1 &=  (2\pi)^{-1} l^2(l+2k) y \tu(l,y), \\
    S_2 &=  \frac{i}{2\pi}  
                \left(
                    l^2 \tu(l,y) + l^2 y \frac{\diff \tu}{\diff y}- l^2 \frac{\diff \tu}{\diff l}(l,y)
                \right).
    \end{align*}
    We can bound the left-hand side of \eqref{Tpm.est03.bd2.1.pre} provided
    \begin{equation}
        \label{Tpm.est03.bd2.1.norms}
            l^3 y \tu, \, l^2 \tu, \, y\diff_y \tu, \, l^2 \diff_l \tu 
            \in L^2(|l|^{-1} \, dl \, dy).
    \end{equation}

    Next, we compute
    \begin{align}
        \label{Tpm.est03.bd2.2.pre}
            (l-k)^2 \frac{\diff T^\pm_2}{\diff l}(k,l)
                &=  (2\pi)^{-1} 2l (l-k)^2 y (\tu*\tmu^\pm_\sharp)(k,l-k;y) \, dy \\
            \nonumber
                &\quad  - \frac{i}{2\pi}
                    \int e^{i(l^2-k^2)y}(l-k)^2
                        \left(\frac{\diff \tu}{\diff l}*\tmu^\pm_\sharp \right)(k,l-k;y) \, dy.
    \end{align}
    As before, we make a change of variables, integrate by parts, and apply the product rule for convolutions to obtain
    \begin{align*}
        l^2 \frac{\diff T^\pm_2}{\diff l}(k,l+k)
            &=  \int e^{il(l+2k)y}(S_1 + S_2 + S_3) \, dy
        \intertext{where}
        \nonumber
        S_1 &= \frac{1}{2\pi} y   \left[
                        (l^3 \tu)*\tmu^\pm_\sharp+3(l^2\tu)*(l\tmu^\pm_\sharp) \right.\\
        \nonumber 
            &\qquad \qquad +    \left. 
                        3(l\tmu)*(l^2 \tmu^\pm_\sharp) + \tu*(l^3 \tmu^\pm_\sharp) 
                    \right],  \\[5pt]
        \nonumber
        S_2 &=  \frac{i}{2\pi} y   \left[
                        (l\tmu)*\tmu^\pm_\sharp +
                        \tmu*(l\tmu^\pm_\sharp) +
                        \left(l \frac{\diff \tu}{\diff y} 
                        \right)*\tmu^\pm_\sharp 
                        \right. \\
        \nonumber
                    &\qquad \qquad +
                        \left. 
                        \left( \frac{\diff \tu}{\diff y}*(l\tmu^\pm_\sharp)\right) +
                        (l \tu)*\frac{\diff \tmu^\pm_\sharp}{\diff y} +
                        \tu*\left( l \frac{\diff \tmu^\pm_\sharp}{\diff y}
                        \right)
                    \right],
        \\[5pt]
        \nonumber
        S_3 &=  -\frac{i}{2\pi}
                    \left[
                        \left(l^2 \frac{\diff \tu}{\diff l }\right)*\tmu^\pm +
                        2 \left(l \frac{\diff \tu}{\diff l} \right)*(l\tmu^\pm_\sharp)
                        +
                        \frac{\diff \tu}{\diff l}*(l^2 \tmu^\pm_\sharp)
                    \right].
    \end{align*}
    Applying Minkowski's inequality, Young's inequality for convolutions, and H\"{o}lder's inequality on each of the terms in \eqref{Tpm.est03.bd2.2.pre}, we may bound the $L^2$ norm provided
    \begin{equation}
        \label{Tpm.est03.bd2.2.norms}
        \begin{aligned}
            l^3 y \tu, \, l^2 y \tu, \, ly \tu, \, y \tu, \, l \tu, \, \tu, \, ly \diff_y\tu, \, y\diff_y \tu, \, l^2 \diff_l \tu, l \diff_l \tu, \, \diff_l \tu &\in L^1(\R^2),\\
            \tmu^\pm_\sharp, \, l \tmu^\pm_\sharp, \, l^2 \tmu^\pm_\sharp, \, l^3 \tmu^\pm_\sharp, \, \diff_y \tmu^\pm_\sharp, \, l \diff_y \tmu^\pm_\sharp &\in X.
        \end{aligned}
    \end{equation}
    
    \medskip

    (iii): Again, we use the decomposition \eqref{Tpm.split} and estimate the derivatives of $T^\pm_1$ and $T^\pm_2$ separately. Since $E$ is a compact set, we can bound polynomials in $l$ and $k$ by constants depending on $E$.

    First,
    \begin{align}
        \label{Tpm.est04.T1}
            \frac{\diff^3 T_1}{\diff k \diff l^2}
                &=  \int e^{i(l^2-k^2)y}(S_1+S_2) \, dy\\
        \intertext{where}
        \nonumber
            S_1 &=  \frac{1}{2\pi}
                        \left[ 
                            4ky^2(2l^2y-i)\tu(l-k;y) -
                            2y(2lk) \frac{\diff^2 \tu}{\diff l^2}(l-k;y)
                        \right],\\[5pt]
        \nonumber
            S_2 &=  \frac{i}{2\pi} 
                        \left[
                            -4ly^2 (l+2k)\frac{\diff \tu}{\diff l}(l-k;y)
                            + \frac{\diff^3 \tu}{\diff l^3}(l-k;y)
                        \right].
    \end{align}
    Ignoring polynomials in $(l,k)$ we conclude that 
    \begin{align}
        \label{Tpm.est04.T1.bd}
            \norm[L^2(E)]{\frac{\diff^3 T_1}{\diff k \diff l^2}}
                &\lesssim    
                    \norm[L^{2,4}_yL^2_l]{\tu} +
                    \norm[L^{2,3}_y L^2_l]{ \frac{\diff \tu}{\diff l}} + 
                    \norm[L^{2,2}_y L^{2}_l]{ \frac{\diff^2 \tu}{\diff l^2}} +
                    \norm[L^{2,1}_y L^2_l]{\frac{\diff^3 \tu}{\diff l^3}}.    
    \end{align}

    Second, we have
   \begin{align}
       \label{Tp.est04.T2}
        \frac{\diff^3 T^\pm_2}{\diff l^2 \diff k}
            &=  \int e^{i(l^2-k^2)y}(S_1+S_2) \, dy\\
        \intertext{where}
        \nonumber
        S_1 &=   \frac{1}{2\pi }
                    \Biggl[
                        4ky^2 (2l^2y -i)(\tu*\mu^\pm_\sharp)(l-k;y)
                        \Bigr.\\
                \nonumber
                &\qquad 
                        -
                        \Bigl. 2y(2l+k)
                            \left(
                                \frac{\diff^2 \tu}{\diff l^2}*\mu^\pm_\sharp
                            \right)(l-k;y)
                            \Bigr. \\
                \nonumber
                &\qquad +   
                    \Bigl.
                        4ky^2(2l^2y-i)
                        \left(
                            \tu * \frac{\diff \tmu^\pm_\sharp}{\diff k}
                        \right)(l-k;y)
                  \Biggr],\\[5pt]
        \nonumber
        S_2 &=  \frac{i}{2\pi} 
                    \Biggl[
                        \left(
                            \frac{\diff^3 \tu}{\diff l^3}*\mu^\pm_\sharp
                        \right)(l-k;y) 
                        -4ly^2(l+2k)
                        \left(
                            \frac{\diff \tu}{\diff l}*\tmu^\pm_\sharp
                        \right)(l-k;y)
                    \Biggr.\\[5pt]
            \nonumber
            &\quad -
                    \Bigg.
                         8ly^2k 
                            \left(
                                \frac{\diff \tu}{\diff l}*
                                \frac{\diff \tmu^\pm_\sharp}{\diff k}
                            \right)(l-k;y)
                        - 2yk 
                            \left(
                                \frac{\diff^2 \tu}{\diff l^2}*
                                \frac{\diff \tmu^\pm_\sharp}{\diff k}
                            \right)(l-k;y)
                    \Biggr].
   \end{align}
   Ignoring polynomials in $(l,k)$ we conclude that
   \begin{align}
    \label{TP.est04.T1.bd}
       \norm[L^2(E)]{\frac{\diff^3 T_2^\pm}{\diff l^2 \diff k}}
        &\lesssim \left[ 
            \norm[L^{2,4}_y L^2_l]{\tu} + 
            \norm[L^{2,3}_y L^2_l]{\frac{\diff^2 \tu}{\diff l^2} } \right. \\
        &\quad \left. +
            \norm[L^{2,1}_y L^2_l]{\frac{\diff^3 \tu}{\diff l^3}} + 
            \norm[L^{2,3}_y L^2_l]{\frac{\diff \tu}{\diff l}}\right] 
            \norm[L^2_k L^2_l]{\tmu^\pm_\sharp}
        \nonumber
        \\
        &\quad + \left[ \norm[L^{2,5}_y L^2_l]{\tu} + \norm[L^{2,4}_y L^2_l]{\frac{\diff \tu}{\diff l} }+ \norm[L^{2,3}_y L^2_l]{\frac{\diff ^2 \tu}{\diff l^2} }\right] \times 
        \nonumber\\
        &\quad \sup_y \left((1 +|y|)^{-1}\norm[L^{2,1}_l L^2_k]{\frac{\diff \tmu^\pm_\sharp}{\diff k}}  \right)
        \nonumber
   \end{align}
   where we have controlled convolutions using 
   $$ \norm[L^2_l]{\tu * \tmu^\pm_\sharp} \lesssim \norm[L^2_l]{\tu} \norm[L^1_l]{\tmu^\pm_\sharp} \lesssim \norm[L^2_l]{\tu} \norm[L^{2,1}_l]{ \tmu^\pm_\sharp},$$ 
   and similarly for convolutions with derivatives.
   It follows that
   \begin{align}
       \label{TP.est04.T2.bd}
       \norm[L^2(E)]{\frac{\diff^3 T^\pm}{\diff k \diff l^2}}
            &\lesssim 
                \left[ 
            \norm[L^{2,4}_y L^2_l]{\tu} + 
            \norm[L^{2,3}_y L^2_l]{\frac{\diff^2 \tu}{\diff l^2} } \right. \\
        &\quad \left. +
            \norm[L^{2,1}_y L^2_l]{\frac{\diff^3 \tu}{\diff l^3}} + 
            \norm[L^{2,3}_y L^2_l]{\frac{\diff \tu}{\diff l}}\right] 
            \left(1+ \norm[L^2_k L^2_l]{\tmu^\pm_\sharp} \right)
            \nonumber \\
            &\quad + \left[ \norm[L^{2,5}_y L^2_l]{\tu} + \norm[L^{2,4}_y L^2_l]{\frac{\diff \tu}{\diff l} }+ \norm[L^{2,3}_y L^2_l]{\frac{\diff ^2 \tu}{\diff l^2} }\right] \times 
        \nonumber\\
        &\qquad \sup_y \left((1 +|y|)^{-1}\norm[L^{2,1}_l L^2_k]{\frac{\diff \tmu^\pm_\sharp}{\diff k}}  \right)
        \nonumber
   \end{align}
    where we used the facts that 
    \begin{align}
        \label{TP.est04.T2.sub1}
        \tmu^\pm_\sharp &\in X,
        \intertext{(see \eqref{tmu.est0.X}) and}
        \label{TP.est04.T2.sub2}
        \norm[L^2_k L^{2.1}_l]{\frac{\diff \tmu^\pm_\sharp}{\diff k}(\dotarg,\dotarg;y)}  &\lesssim (1+|y|).
    \end{align}
    (see \eqref{est: mu-k L2} and \eqref{est: mu-k  L2lk}).

	\medskip

    (iv): Owing to the compact support of the cutoff function $\psi$, we may neglect polynomial factors of $l$ and $k$. We begin with several reductions.

    First, Proposition \ref{prop:Tpm.exist} implies that (a) is an $L^2$ function. Next, we note that \eqref{est: mu-k L2} implies that (b) is an $L^2$ function, which in turn implies that (e) is an $L^2$ function. Next, we note that \eqref{Tpm.est04} implies that (c) is an $L^2$ function since we may neglect the factor $(l-k)$. Finally, we may bound (d) as follows. Note that
    $$ \frac{\diff^2}{\diff l^2}(\psi T^\pm) = 
        \frac{\diff^2 \psi}{\diff l^2} T^\pm +
        2\frac{\diff \psi}{\diff l} \frac{\diff T^\pm}{\diff l} +
        \psi \frac{\diff^2 T^\pm}{\diff l^2}$$
    and the first term is bounded since (a) is bounded. Thus it suffices to bound local $L^2$ norms of $\dfrac{\diff T^\pm}{\diff l}$ and $\dfrac{\diff^2 T^\pm}{\diff l^2}$. Such bounds can be obtained by the same strategy as the proof of \eqref{Tpm.est04}.

    It remains to prove that (f) is an $L^2$ function. Since we may neglect polynomials in $l$ and $k$, and (c) is an $L^2$ function, it only remains to show that $\displaystyle\frac{\diff^2}{\diff l \diff k}(\psi T^\pm)$ is an $L^2$ function. We compute
    $$ \frac{\diff^2}{\diff l \diff k}(\psi T^\pm) =
        \frac{\diff^2 \psi}{\diff l \diff k} T^\pm + \frac{\diff \psi}{\diff k} \frac{\diff T^\pm}{\diff l} + 
        \frac{\diff \psi}{\diff l} \frac{\diff T^\pm}{\diff k} + 
        \psi \frac{\diff^2 T^\pm}{\diff k \diff l}.$$
    The first right-hand term is $L^2$ since (a) is $L^2$, and the third right-hand term is $L^2$ since (b) is $L^2$. It remains to bound local $L^2$ norms of $\diff T^\pm/\diff l$ and $\diff^2T^\pm/\diff l \diff k$.
    Again, we may use the same strategy of proof as used to prove \eqref{Tpm.est04}.

\end{proof}
\section{The Reconstruction Formula}

\label{sec:asy}

In what follows we denote
\begin{equation}
	\label{t.def}
		f(k,l) = T^+(k,l) + T^-(k,l)
\end{equation}
and
\begin{equation}
	\label{S0.def}
		S_0(k,l;\xi,\eta)
			=	(l-k)\xi - (l^2-k^2)\eta + 4(l^3-k^3).
\end{equation}
We denote by $\mu^l = \mu^l(k,x;y,t)$ the unique solution of the nonlocal Riemann-Hilbert problem
\begin{equation}
	\label{mul.RHP}
		\mu^l = 1 + \calC_T(\mu^l)
\end{equation}
with $\mu^l(\cdot,x;y,t) - 1 \in L^2_k(\R)$. 
Here
\begin{equation}
	\label{CT.def}
		\calC_T = C_+ \calT^- + C_-\calT^+
\end{equation}
where $C_\pm:L^2(\R) \to L^2(\R)$ are Cauchy projectors and
$\calT^\pm$ are the integral operators
\begin{equation}
	\label{calTpm.def}
	\calT^\pm(f)(k) = \int e^{itS_0} T^\pm(k,l)f(l) \, dl.
\end{equation}
Note that the nonlocal Riemann-Hilbert problem considered here differs from the one considered in section \ref{sec:NLRHP} by the addition of time evolution for the scattering data: the time-zero scattering data $T^\pm(k,l)$ is replaced by the time-evolved scattering data $T^\pm(k,l) e^{4it(l^3-k^3)}$, giving rise to the third term in \eqref{S0.def}.

The reconstruction formula is
\begin{align}
	\label{u.recon}
		u(t,x,y) 
			&= 	u_1(t,x,y) + u_2(t,x,y)
	\intertext{where}
	\label{u.local}
		u_1(t,x,y)
			&=	\frac{1}{\pi} \int e^{itS_0(k,l;\xi,\eta)}
					i(l-k) f(k,l) \, dl \, dk	,
	\intertext{and}
	\label{u.non-local}
		u_2(t,x,y)
			&= \frac{1}{\pi} \frac{\diff}{\diff x}
					\int e^{itS_0(k,l;\xi,\eta)}
						f(k,l)
						\left( \mu^l(k,x;y,t) -1 \right)
					\, dl \, dk.
\end{align}
We find the asymptotics of $u_1$ in section \ref{subsec:local} and of $u_2$ in section \ref{subsec:nonlocal}. 

Throughout this section, we assume that \eqref{u.C1} and \eqref{u.C2} hold.
In our estimates, we will sometimes use the decomposition \eqref{Tpm.split} for $T^\pm$.

We will sometimes use the change of variables
\begin{equation}
	\label{kl.shift}
	(k,l)	\to (k+\eta/12,l+\eta/12)
\end{equation}
which changes the phase function to 
\begin{align}
	\label{S.def}
	S(k,l;a) &= 12a (l-k) + 4(l^3-k^3)
	\intertext{where}
	\label{a.def}
	a	&= \frac{1}{12}\left(\xi - \frac{\eta^2}{12} \right).
\end{align}
The amplitude becomes
\begin{equation}
	\label{wTpm.def}
		\wT^\pm(k,l) = T^\pm(k+\eta/12,l+\eta/12). 
\end{equation}

\subsection{The Local Term}
\label{subsec:local}

We consider the large-time asymptotics of
\begin{equation}
	\label{KPI.loc.u1+}
		u_1^\pm(t,x,y)
			= \frac{1}{\pi} 
				\int_{\R^2}
					e^{itS(k,l;a)}i(l-k) \wT^\pm(k,l) \, dl \, dk.
\end{equation}

The integral for $u_1^\pm$ is absolutely convergent under our hypotheses on $u$ owing to the estimate \eqref{Tpm.est0}
in Proposition \ref{prop:Tpm.kl.L2}. We will sometimes write $S(l)=12al+4l^3$ and $S(k)=12ak+4k^3$. 
We will consider the cases
$a<0$, $a \sim 0$, and $a>0$ corresponding to no stationary phase, nondegenerate stationary phase, and degenerate stationary phase.  In what follows, we will carry out the analysis for $u_1^+$ only since the analysis for $u_1^-$ is very similar.

Estimates that we will need on scattering data are given in \eqref{KPI.u1+.needed.1} and \eqref{KPI.u1+.needed.2}.

We will prove:

\begin{theorem}
    \label{thm:u1}
    Suppose that $u \in \mathbf{E}_{1,w}$ and that conditions \eqref{u.C1}, \eqref{u.C2}, and \eqref{u.C2a} hold. Fix $\delta>0$. Then
	\begin{equation}
		\label{u1.bd}
			|u_1(t,x,y)| \lesssim 
				\begin{cases}
					o(t^{-1}), & a > \delta > 0,\\
					t^{-\frac23}, & |a| < \delta,\\
					t^{-1}, 	& a < -\delta < 0.	
				\end{cases}
	\end{equation}
\end{theorem}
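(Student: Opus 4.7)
The plan is to exploit the factorization $S(k,l;a)=\phi(l)-\phi(k)$, where $\phi(x)=12ax+4x^{3}$ satisfies $\phi'(x)=12(a+x^{2})$ and $\phi'''(x)\equiv 24$. The three regimes correspond to: $a>\delta>0$ with no real critical points; $a<-\delta<0$ with four nondegenerate critical points at $(\pm r,\pm r)$ where $r=\sqrt{-a}$; and $|a|<\delta$ where the critical points either collapse to the origin or lie in a small neighborhood of it, while only $\phi'''$ remains uniformly nonzero. I would treat the three regions by, respectively, nonstationary phase upgraded via a density argument to $o(t^{-1})$, two-dimensional nondegenerate stationary phase, and a Fourier-decoupling argument that reduces the double integral to a product of one-dimensional Airy-type integrals.

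For $a>\delta$, the first step is an integration by parts in $l$, valid because $|\partial_{l}S|=12(a+l^{2})\geq 12\delta$. The resulting integrand is a sum of $\wT^{\pm}/(a+l^{2})$, $l(l-k)\wT^{\pm}/(a+l^{2})^{2}$, and $(l-k)(\partial_{l}\wT^{\pm})/(a+l^{2})$, each an $L^{1}(\R^{2})$ function uniformly in $(x,y)$ by Proposition~\ref{prop:Tpm.kl.L2}(ii). This already delivers $O(t^{-1})$. To upgrade to $o(t^{-1})$ I would approximate this $L^{1}$ integrand by $C_{c}^{\infty}$ functions $v_{\varepsilon}$ in $L^{1}$ norm; iterated integration by parts on each smooth, compactly supported piece produces $O_{\varepsilon}(t^{-N})$ for any $N$, while the $L^{1}$ error contributes at most $\varepsilon/t$ times a fixed constant. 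Taking $t\to\infty$ first and $\varepsilon\to 0$ afterwards gives the stated $o(t^{-1})$ bound.

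For $a<-\delta<0$, the Hessian $\Hess S=\operatorname{diag}(-24k,24l)$ is nondegenerate at each of the four critical points $(\pm r,\pm r)$ (two saddles and two extrema). I would introduce smooth cutoffs $\chi_{\pm\pm}$ localized near each critical point and set $\chi_{\infty}:=1-\sum\chi_{\pm\pm}$. On $\supp\chi_{\infty}$ at least one of $|\partial_{l}S|,|\partial_{k}S|$ is bounded below, so integration by parts again yields $O(t^{-1})$; on each $\supp\chi_{\pm\pm}$ the classical two-dimensional nondegenerate stationary phase expansion applies, with the local $L^{2}$ bounds on $\psi T^{\pm}$ and its mixed derivatives from parts (a)--(f) of Proposition~\ref{prop:Tpm.kl.L2}(iv) supplying the regularity required to control remainder terms. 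Summing the four critical-point contributions recovers a rigorous version of the Manakov--Santini--Takhtajan formula \eqref{KPI.MST}.

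The transition regime $|a|<\delta$ is the main obstacle, since the Hessian degenerates as $a\to 0$ while only $\phi'''$ is uniformly nonzero, ruling out both the nonstationary and the standard nondegenerate stationary phase methods. Here I would first multiply by $\psi\in C_{c}^{\infty}(\R^{2})$ equal to one on a fixed large ball, handling the tail by integration by parts using $|\partial_{l}S|\gtrsim l^{2}$ for $|l|$ large, and on the compactly supported piece use Fourier inversion
\[
    (l-k)\psi(k,l)\wT^{\pm}(k,l)=\frac{1}{2\pi}\iint e^{i(k\xi+l\eta)}\,g^{\pm}(\xi,\eta)\,d\xi\,d\eta.
\]
Exchanging orders of integration, the inner $(k,l)$-integral factors as a product of two one-dimensional oscillatory integrals with cubic phase $t\phi(l)+\eta l$ (and analogously in $k$). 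The Van der Corput lemma in its third-derivative form, combined with $|\phi'''|\equiv 24$, bounds each factor by $Ct^{-1/3}$ uniformly in the Fourier variables, producing $O(t^{-2/3})$ pointwise inside the outer integral. Absolute convergence of the outer integral reduces to $g^{\pm}\in L^{1}$, which follows from the $L^{2}$ bounds on $\psi T^{\pm}$ and its mixed derivatives up to order three listed in Proposition~\ref{prop:Tpm.kl.L2}(iv) via Cauchy--Schwarz. Combining the three regimes yields \eqref{u1.bd}; the technical heart is verifying that the Fourier-decoupling argument in the transition range can be closed with exactly the regularity afforded by Proposition~\ref{prop:Tpm.kl.L2}.
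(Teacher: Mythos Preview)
Your treatment of $a>\delta$ matches the paper's Proposition~\ref{prop:KPI.loc.u1.+}. For the other two regimes there is a real gap: you have not accounted for the Heaviside factor $H(\pm(l-k))$ built into $T^{\pm}$, so that the true amplitude is $(l-k)_{+}$ times a function whose derivatives are controlled only for the smooth extension (Proposition~\ref{prop:Tpm.kl.L2} explicitly drops the Heaviside before taking derivatives). For $a<-\delta$ two of the four critical points, $(r,r)$ and $(-r,-r)$, lie exactly on the diagonal $l=k$ where the amplitude has a corner, so classical two-dimensional stationary phase does not apply there. For $|a|<\delta$ your factorization into a product of one-dimensional Airy integrals is correct once the Heaviside is absorbed into $g^{\pm}$, but then $g^{\pm}\in L^{1}(\R^{2})$ would require, among other things, $\partial_{k}^{2}(\psi T^{\pm})\in L^{2}$, which is not among the six quantities (a)--(f) of Proposition~\ref{prop:Tpm.kl.L2}(iv); that list is tailored to a different reduction.

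The paper's device in both regimes is to iterate one-dimensional Fourier analysis and push the Heaviside onto the oscillatory kernel rather than the amplitude. One writes the localized piece as $\int f(k)g(k)\,dk$ with $f(k)=e^{-itS(k)}$, estimates $\widehat{f}$ via the Airy function, and reduces matters to bounding $\|g\|_{H^{1}_{k}}$; then $g(k)=\int h_{1}(l)h_{2}(l)\,dl$ with $h_{1}(l)=H(l-k)e^{itS(l)}$ carrying the Heaviside and $h_{2}$ the \emph{smooth} localized amplitude. The Fourier transform of $h_{1}$ is the ``half-Airy'' integral $\int_{k}^{\infty}e^{itS(l)-i\xi l}\,dl$, bounded directly in Appendix~\ref{App:Airy} by $t^{-1/2}(1+|\xi|)$ when $a<-\delta$ (Lemma~\ref{lemma:partial.airy.ndg}) and by $t^{-1/3}$ when $|a|<\delta$ (Lemma~\ref{lemma:partial.airy.dg}). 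The list (a)--(f) in Proposition~\ref{prop:Tpm.kl.L2}(iv) is exactly the collection of $L^{2}$ norms of $h_{2}$, $h_{2,1}$, $h_{2,2}$ and their second $l$-derivatives that this iterated argument requires.
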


\begin{proof}
    An immediate consequence of Propositions \ref{prop:KPI.loc.u1.+}, \ref{prop:KPI:u1.-}, and 
    \ref{prop:KPI.loc.u1.0}.
\end{proof}

\subsubsection{No Stationary Phase}
\label{subsec:KPI.loc.nsp}

\begin{proposition}
\label{prop:KPI.loc.u1.+}
	Suppose that $a = r^2>0$, $u_0 \in \mathbf{E}_{1,w}$, Suppose that $u \in \mathbf{E}_{1,w}$ and that conditions \eqref{u.C1}, \eqref{u.C2}, and \eqref{u.C2a} hold.. Then
	\begin{equation}
		\label{KPI.loc.u1.+}
		u_1^+(t,x,y) = o_r(t^{-1}).
	\end{equation}
\end{proposition}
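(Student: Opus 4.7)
The plan is to exploit the fact that when $a=r^2>0$, the phase $S(k,l;a) = 12a(l-k)+4(l^3-k^3)$ has no stationary points: $\partial_l S = 12(a+l^2) \geq 12a$ and $\partial_k S = -12(a+k^2) \leq -12a$ are both bounded away from zero. In this regime the natural strategy is a single integration by parts in $l$, which yields the bound $O(t^{-1})$, followed by a Riemann--Lebesgue argument that upgrades this to $o_r(t^{-1})$.

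For the first step, I would write $e^{itS}=(12it(a+l^2))^{-1}\,\partial_l e^{itS}$ and integrate by parts in $l$. The scattering coefficient $\wT^+(k,l)$ is supported in $\{l\geq k\}$, so the inner $l$-integral runs over $[k,\infty)$; the boundary term at $l=k$ is killed by the factor $(l-k)$ in the amplitude, and the boundary at $l=+\infty$ vanishes by the decay of $\wT^+$ established in \S\ref{sec:Tpm}. The result is
\begin{equation*}
    u_1^+(t,x,y) = -\frac{1}{12\pi t} \iint e^{itS(k,l;a)}\,\partial_l\!\left[\frac{(l-k)\wT^+(k,l)}{a+l^2}\right] dl\,dk.
\end{equation*}
Applying the Leibniz rule expands the bracket into three terms controlled in $L^1(\R^2)$ by $\wT^+/(a+l^2)$, $(l-k)\partial_l\wT^+/(a+l^2)$, and $l(l-k)\wT^+/(a+l^2)^2$, each of which belongs to $L^1(\R^2)$ by Proposition \ref{prop:Tpm.kl.L2}(ii). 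This already gives $u_1^+(t,x,y)=O_r(t^{-1})$.

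To promote $O_r(t^{-1})$ to $o_r(t^{-1})$, I would apply the Riemann--Lebesgue lemma to the remaining oscillatory integral. For each fixed $k$, the map $l\mapsto S(k,l;a)$ is a smooth diffeomorphism of $\R$ onto itself because $\partial_l S\geq 12a>0$, so the substitution $u=S(k,l;a)$ converts the inner $l$-integral of each of the three pieces into a classical Fourier integral in $u$ whose amplitude is $L^1_u$ (its $L^1_u$-norm equals the $L^1_l$-norm of the corresponding bracket term, by the Jacobian identity $du=12(a+l^2)\,dl$). Riemann--Lebesgue then forces this inner integral to tend to zero pointwise in $k$ as $t\to\infty$. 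Its modulus is dominated uniformly in $t$ by the $L^1_l$-norm of the corresponding bracket term, which is integrable in $k$ by Fubini, so dominated convergence transfers the pointwise limit to the full double integral.

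The main obstacle, which is largely bookkeeping, is the support discontinuity of $\wT^+$ across $l=k$: the distributional derivative $\partial_l\wT^+$ carries a Dirac mass on $\{l=k\}$, and one must verify that this is annihilated by the $(l-k)$ factor, so that the Leibniz expansion above produces only the three regular contributions to which Proposition \ref{prop:Tpm.kl.L2}(ii) applies in the form stated. Beyond this point, the non-stationary phase and the decay of $\wT^+$ at infinity make the argument routine.
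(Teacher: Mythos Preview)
Your proposal is correct and follows essentially the same approach as the paper: a single integration by parts in $l$ (the boundary term at $l=k$ killed by the $(l-k)$ factor), reduction to the three $L^1$ bounds of Proposition~\ref{prop:Tpm.kl.L2}(ii), and then a Riemann--Lebesgue argument to upgrade $O_r(t^{-1})$ to $o_r(t^{-1})$. The only cosmetic difference is in the last step: the paper argues by density of $C_0^\infty$ in $L^1$ (for smooth compactly supported $B$ one integrates by parts once more to gain an extra $t^{-1}$), whereas you invoke the classical Riemann--Lebesgue lemma directly via the diffeomorphism $l\mapsto S(k,l;a)$ followed by dominated convergence in $k$; both routes are standard and equivalent.
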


\begin{proof}
	From \eqref{KPI.loc.u1.+}, after an integration by parts in the $l$-variable we have
	\begin{equation}
		u_1^+(t,x,y)
			=	-\frac{i}{\pi t} \int_\R e^{-itS(k;a)}\int_k^\infty e^{itS(k;l)} A(k,l;a) \, dl \, dk
	\end{equation}	
	where
	$$ A(k,l;a) = \diff_l \left( \frac{(l-k)\wT^+(k,l)}{S_1'(l;a)}\right),$$
	and 
	$$ S_1'(l;a) = 12(r^2+l^2). $$
	To show that $u_1^+(t,x,y) =o_r(t^{-1})$, consider the map 
	$$ L^1(\R^2) \ni B \mapsto I(t) = \int_{\R^2} e^{itS(k,l;a)} B(k,l)\, dl \, dk. $$
	This map is continuous uniformly in $t$ and, for $B \in C_0^\infty(\R^2)$, an integration by parts argument shows that $I(t) \to 0$ as $t \to \infty$. It follows by a density argument that the same is true for $B \in L^1(\R^2)$. Hence, to obtain the result, it suffices to show that $A(k,l;a) \in L^1(\R^2)$.
	This will follow if the functions
	\begin{equation}
		\label{KPI.u1+.needed.1}
			\frac{\wT^+(k,l)}{l^2+ r^2}, \quad \frac{l-k}{l^2+r^2}\frac{\diff \wT^+(k,l)}{\diff l}, \quad
		\frac{l(l-k) \wT^+(k,l)}{(l^2+r^2)^2} \quad \in L^1(\R^2).
	\end{equation}
    These estimates are proved in Proposition \ref{prop:Tpm.kl.L2}, equations \eqref{Tpm.est01}--\eqref{Tpm.est03}.
\end{proof}

\begin{remark}
	\label{rem:KPI.u1+.nocp.kl}
	A similar proof can be given carrying out integration by parts with respect to $k$ rather than $l$.
\end{remark}

\begin{remark}
	If $a=-r^2$ instead, the phase function has critical points at $(k,l)=(\pm r, \pm r)$. Suppose that $\psi \in C^\infty(\R^2)$ with $\psi=0$ in balls of radius $r/2$ about each critical point and $\psi=1$ outside balls of radius $3r/4$ about each critical point. Since
	$$ \frac{\diff S}{\diff l}(k,l) = 12(l^2-r^2), \quad \frac{\diff S}{\diff k}(k,l) = -12(k^2-r^2),$$	
	it follows that, in the support of $\psi$, we have either
	$$ \left| \frac{\diff S}{\diff l}(k,l) \right| \geq \frac{3r^2}{4},$$
	or 
	$$ \left| \frac{\diff S}{\diff k}(k,l) \right| \geq \frac{3r^2}{4} $$
throughout the support of $\psi$. By Remark \ref{rem:KPI.u1+.nocp.kl},  we can modify the proof of Proposition \ref{prop:KPI:u1.-} to obtain an $o_r(t^{-1})$ estimate for the integral
	\begin{equation}
		\label{KPI.u1+.cp.away}
			I(t) = \frac{1}{\pi} \int_{\R^2} e^{itS(k,l;a)}\psi(k,l)i(l-k)\wT^+(k,l) \, dl \, dk.
	\end{equation}
\end{remark}

\subsubsection{Nondegenerate Stationary Phase}
\label{subsec:KPI.loc.nondegensp}

We suppose that $a=-r^2 < 0$ where $r>c>0$ for some $c$ that remains fixed throughout the analysis. Constants in the estimates depend on $c$ and diverge as $c \to 0$.  We will prove:

\begin{proposition}
	\label{prop:KPI:u1.-}
	Suppose that $a =-r^2$,  that $u_0 \in \mathbf{E}_{1,w}$ and that conditions \eqref{u.C1}, \eqref{u.C2}, and \eqref{u.C2a} hold. Then
	\begin{equation}
		\label{KPI.loc.u1.-}
		|u_1^+(t,x,y)| \lesssim_{\, r} t^{-1}.
	\end{equation}
\end{proposition}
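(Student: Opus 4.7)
The plan is a partition of unity argument around the stationary points of the phase. The phase $S(k,l;-r^2) = -12r^2(l-k) + 4(l^3-k^3)$ has gradient $(-12(k^2-r^2),\ 12(l^2-r^2))$, so its critical set is the four corners $\{(\pm r, \pm r)\}$. The Hessian is diagonal with entries $-24k$ and $24l$, so each critical point is nondegenerate, with $|\det \Hess S| = 576 r^2$.

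First, with $\psi$ as in the remark preceding the proposition (vanishing on balls of radius $r/2$ around each critical point, equal to $1$ outside balls of radius $3r/4$), I would pick bump functions $\chi_1,\ldots,\chi_4$, with $\chi_j$ supported in the ball of radius $3r/4$ around the critical point $p_j = (k_j,l_j)$, so that $\psi + \sum_j \chi_j \equiv 1$. Splitting
\[
u_1^+(t,x,y) = I_0(t) + \sum_{j=1}^{4} I_j(t)
\]
with $I_0$ carrying the weight $\psi$ and $I_j$ the weight $\chi_j$, the remark preceding the proposition supplies $I_0(t) = o_r(t^{-1})$ directly (its integration-by-parts proof goes through because on $\supp \psi$ at least one of $|\partial_k S|$, $|\partial_l S|$ is bounded below by $3r^2/4$).

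For each localized piece $I_j$, I would apply a two-dimensional nondegenerate stationary phase lemma. A useful observation is that the amplitude factor $(l-k)$ vanishes at $(r,r)$ and $(-r,-r)$, so the leading stationary-phase contribution at those two critical points is automatically zero and they yield decay strictly faster than $t^{-1}$. At the other two critical points $(r,-r)$ and $(-r,r)$, we have $|l_j - k_j| = 2r \neq 0$, the amplitude $i(l-k)\wT^+(k,l)$ is bounded, and stationary phase produces a leading term of the form
\[
I_j(t) = \frac{C_j}{t}\, i(l_j-k_j)\wT^+(k_j,l_j)\, e^{itS(p_j;-r^2) + i\theta_j} + O(t^{-1-\alpha})
\]
with $C_j$, $\theta_j$ determined by the Hessian at $p_j$ and some $\alpha > 0$.

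The main obstacle will be the finite regularity of $\wT^+$, which precludes a $C^\infty$ stationary phase expansion. The plan is an $L^2$-based argument: after a smooth Morse change of variables supported in $\supp \chi_j$ reducing the phase to $\pm(k')^2 \pm(l')^2$ plus a constant, use the explicit Fresnel integral for the quadratic phase and control the remainder by Plancherel-type estimates involving low-order mixed $L^2$ norms of $\chi_j \wT^+$ and of $(l-k)\chi_j \wT^+$. These are precisely the bounds assembled in Proposition \ref{prop:Tpm.kl.L2}(iv), items (a)--(f), whose specific combination of $k$-derivatives and $l$-derivatives (through order three, with weight $(l-k)$ attached where needed) appears tailor-made for this stationary-phase step. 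Combining the estimate on $I_0$ with the four bounds on the $I_j$ yields $|u_1^+(t,x,y)| \lesssim_r t^{-1}$.
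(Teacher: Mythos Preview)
Your partition-of-unity strategy and the treatment of $I_0$ match the paper. The gap is in handling the critical points on the diagonal. Recall that $\wT^+(k,l)$ carries the Heaviside factor $H(l-k)$, so the integral defining $u_1^+$ is really over the half-plane $\{l>k\}$. Consequently $(r,-r)$ lies outside the domain and contributes nothing (the paper notes this explicitly), while $(r,r)$ and $(-r,-r)$ lie \emph{on the boundary} $l=k$. A Morse change of variables followed by a Fresnel integral does not apply directly at a boundary critical point: after extending the amplitude by zero across $l=k$ you obtain $\max(l-k,0)\,\wT^+(k,l)$, which is only Lipschitz, and the claim that the vanishing of $(l-k)$ forces decay strictly faster than $t^{-1}$ is not justified by the interior stationary-phase remainder estimates you invoke.

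The paper handles this by an \emph{iterated one-dimensional} argument rather than a 2D Morse lemma. Writing $I=\int f(k)g(k)\,dk$ with $f(k)=e^{-itS(k)}$ and $g(k)=\int_k^\infty e^{itS(l)}i(l-k)\psi(k,l)\wT^+(k,l)\,dl$, one passes to Fourier variables and uses that $\widehat{f}$ is an Airy function, contributing $t^{-1/2}$. To extract a second factor of $t^{-1/2}$ from $g$, one writes $g(k)=\int \widehat{h_1}(-\xi)\widehat{h_2}(\xi)\,d\xi$ with $h_1(l)=H(l-k)e^{itS(l)}$ and uses the \emph{half-Airy} estimate $|\widehat{h_1}(\xi)|\lesssim_r t^{-1/2}(1+|\xi|)$ of Lemma~\ref{lemma:partial.airy.ndg}; this is precisely what absorbs the half-line integration in $l$. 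The asymmetry in Proposition~\ref{prop:Tpm.kl.L2}(iv) --- one $k$-derivative but two $l$-derivatives --- comes from here: the growth $(1+|\xi|)$ in the half-Airy bound costs two $l$-derivatives in $L^2$, while controlling $\|(1+\xi^2)^{1/2}\widehat{g}\|_{L^2}$ costs one $k$-derivative. Your 2D approach would need a separate boundary-stationary-phase lemma at $(r,r)$ and $(-r,-r)$ to reach the same conclusion.
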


\begin{proof}
	By Remark \ref{rem:KPI.u1+.nocp.kl}, it suffices to consider the integrals $I(t,x,y)$ of the form 
	$$ I(t,x,y)  = \frac{1}{\pi} 
				\int_{\R^2}
					e^{itS(k,l;a)}i(l-k) \psi(k,l) \wT^+(k,l) \, dl \, dk$$ 
	where $\psi \in C_0^\infty(\R^2)$ has support in a ball of radius $3r/4$ about one of the critical points $(-r,r)$, $(-r,-r)$, or $(r,r)$ (there is no contribution from the critical point $(r,-r)$ due to the Heaviside function). For the critical point $(\alpha r, \beta r)$ we may choose
		\begin{equation}
			\label{KPI.u1+.cp.cutoff}
				\psi(k,l) = \varphi_1(k) \varphi_2(l) 
		\end{equation}
		where
		$$ \varphi_1(k) = \varphi\left(\frac{k-\alpha r}{r} \right), \quad \varphi_2(l) =  \varphi\left(\frac{l-\beta r}{r} \right) ,
		$$
		and $\varphi(s)$ has support in $(-3/4,3/4)$. 
	We write
	$$ I(t,x,y)= \frac{1}{\pi} \int f(k) g(k) \, dk $$
	where
	\begin{align*}
		f(k) 	&= e^{-itS(k)},\\
		g(k)	&=	\int_k^\infty 
						e^{itS(l)}i(l-k)\psi(k,l) \wT^+(k,l) \, dl.
	\end{align*}	
	It follows that
	\begin{equation}
		\label{KPI.u1+.rep1}
			I(t,x,y) = \frac{1}{\pi} \int \widehat{f}(-\xi) \widehat{g}(\xi) \, d\xi 
	\end{equation}
	where
	\begin{align}
		\label{KPI.u1+.fft.def}
		\widehat{f}(\xi) &= \frac{\sqrt{2}}{(12t)^\frac13}\Ai\left((12t)^\frac23\left(a+ \frac{\xi}{12t}\right)\right),\\
		\label{KPI.u1+.gft.def}
		\widehat{g}(\xi) &= \frac{1}{\sqrt{2\pi}}
			\int e^{-i\xi k} \int_k^\infty e^{itS(l)}i(l-k) \psi(k,l) \wT^+(k,l) \, dl \, dk,
	\end{align}
	so that
	\begin{equation}
		\widehat{f}(-\xi) = \frac{\sqrt{2}}{(12t)^{\frac13}}
			\Ai\left((12t)^{\frac23}\left(a-\frac{\xi}{12t} \right)\right).
	\end{equation}
	The Airy function obeys the bounds
	\begin{equation}
		\label{Airy.left}
			\Ai(z) \lesssim (1+|z|)^{-\frac14}.
	\end{equation}
	Hence, if
	\begin{equation}
		\label{KPI.Ai.c}
			\left| a-\frac{\xi}{12t} \right| \geq c> 0,
	\end{equation}
	then the estimate
	\begin{equation}
		\label{KPI.Ai.est.t.pre}
			|f(-\xi)| \lesssim c^{-\frac14}   t^{-\frac12}
	\end{equation}
	holds.
	
	Condition \eqref{KPI.Ai.c} is fulfilled if $\xi > 12t(-r^2+c)$. If $c=r^2/2$, estimate \eqref{KPI.Ai.est.t.pre} implies that
	\begin{equation}
		\label{KPI.Ai.est.t}	
			f(-\xi) \lesssim r^{-1/2} t^{-\frac12}, \quad
			\xi \in (-6tr^2,\infty). 
	\end{equation}
	The complementary region $\xi<12t(-r^2+c)$ is the interval $(-\infty,-6tr^2)$. 	In this interval, we have
	\begin{equation}
		\label{KPI.Ai.est.t.bad}
			|f(-\xi)| \lesssim t^{-\frac13}.
	\end{equation}
	Write 
	$$
	\int \widehat{f}(-\xi) \widehat{g}(\xi) \, d\xi = I_1(t) + I_2(t)
	$$
	where
	\begin{align}
		I_1(t) &= \int_{\xi < -6tr^2} \widehat{f}(-\xi) \widehat{g}(\xi) \, d\xi,\\
		I_2(t)	&=	\int_{\xi >-6tr^2} \widehat{f}(-\xi) \widehat{g}(\xi) \, d\xi.
	\end{align}
	
	For $I_1$, we may may use \eqref{KPI.Ai.est.t.bad} to estimate
	\begin{align}
		\label{KPI.u1+.I1.est}
		|I_1(t)| & \lesssim  t^{-1/3} 
				\int_{\xi < -6tr^2} (1+\xi^2)^{-\frac12} (1+\xi^2)^\frac12 \widehat{g}(\xi) \, d\xi\\
				\nonumber
				&\lesssim  t^{-\frac13} (tr^2)^{-\frac12} 
					\left( \int_{\xi < -6tr^2} (1+\xi^2)|\widehat{g}(\xi)|^2 \, d\xi \right)^\frac12.
	\end{align}
	
	For $I_2$, using \eqref{KPI.Ai.est.t}, we conclude that
	\begin{align}
		\label{KPI.u1+.I2.est}
			|I_2(t)| 
				&\lesssim (rt)^{-\frac12} \int |\widehat{g}(\xi)| \, d\xi	\\
				\nonumber
				&\lesssim (rt)^{-\frac12} \left( \int_{\xi>-6tr^2} (1+\xi^2) |\widehat{g}(\xi)|^2 \, d\xi \right)^\frac12.
	\end{align}
	
	Owing to \eqref{KPI.u1+.I1.est} and \eqref{KPI.u1+.I2.est}, to complete the bounds it will suffice to show that
	$$ \norm[L^2(\R)]{(1+\xi^2)^\frac12 \widehat{g} } \simeq
		\norm[L^2]{g} + \norm[L^2]{\frac{\diff g}{\diff k}} \lesssim_{\, r} t^{-\frac12}. $$
	To this end, we write
	\begin{align}
	\label{KPI.u1+.g.id}
		g(k)	&=	\int h_1(l) h_2(l) \, dl
		\intertext{where}
		\label{KPI.u1+.h1.def}
		h_1(l)	&=	e^{itS(l)}H(l-k), \\
		\label{KI.u1+.h2.def}
		h_2(l)	&=	i(l-k)\psi(k,l)\wT^+(k,l). 
	\end{align}
	We have
	\begin{equation}
		\label{KPI.u1+.fmult.h}
			\int h_1(l) h_2(l) \, dl = \int \widehat{h_1}(-\xi) \widehat{h_2}(\xi) \, d\xi
	\end{equation}
	where $\widehat{h_1}$ and $\widehat{h_2}$ denote the Fourier transforms of $h_1$ and $h_2$ with respect to $l$. Observe that
	\begin{align}
		\label{KPI.u1+.h1.ft}
		\widehat{h_1}(\xi)	
			&=	(2\pi)^{-\frac12}\int_k^\infty e^{it(12al+4l^3)} e^{-i\xi l} \, dl,
		\intertext{and}
		\label{KPI.u1+.h2.ft}
		\widehat{h_2}(\xi)
			&=	(2\pi)^{-\frac12}\int e^{-i\xi l} \psi(k,l) i(l-k) \wT^+(k,l) \, dl.
	\end{align}
	First, we have the estimate (see Lemma \ref{lemma:partial.airy.ndg})
	\begin{equation}
		\label{KPI.g1.est}
			|\widehat{h_1}(\xi)| \lesssim_{\, r} t^{-\frac12} (1+|\xi|^2)^\frac12,
	\end{equation}
	so that
	\begin{align*}
		|g(k)|	&\leq 		t^{-\frac12} \int (1+|\xi|^2)^\frac12  |\widehat{h_2}(\xi)| \, d\xi\\
				&\lesssim 	t^{-\frac12} \left(\int  (1+|\xi|^2)^2 |\widehat{h_2}(\xi)|^2 \, d\xi \right)^\frac12 \\
				&\lesssim 	t^{-\frac12} \left(\norm[L^2]{h_2}+ \norm[L^2]{\frac{\diff^2 h_2}{\diff l^2}}\right) 
	\end{align*}
	giving finally
	\begin{equation}
	\label{KPI.u1+.cp.g.est}	
		\norm[L^2]{g} \lesssim t^{-\frac12} 
			\left( 
				\norm[L^2_k L^2_l]{h_2} + 
				\norm[L^2_k L^2_l]{\frac{\diff^2 h_2}{\diff l^2}} 
			\right).
	\end{equation}

	We can find an analogous representation for $\diff g/\diff k$, taking some care since the Heaviside function $H(l-k)$ is not smooth in $k$. Writing
	$$
		g(k) =  \int_k^\infty e^{itS(l)} i(l-k)\psi(k,l) \wT^+(k,l) \, dl ,
	$$		
	we recover
	\begin{align}
		\label{KPI.u1+.cp.g.k}
		\frac{\diff g}{\diff k}(k)
			&=	-i \int_k^\infty e^{itS(l)} \psi(k,l) \wT^+(k,l) \, dl \\
			\nonumber
			&\quad + 
				i \int_k^\infty
					e^{itS(l)}(l-k) 
						\frac{\diff}{\diff k}
							\left( \psi(k,l) \wT^+(k,l) \right) 
				\, dl \\
			\nonumber
			&=	-i \int e^{itS(l)}H(l-k)\psi(k,l) \wT^+(k,l) \, dl \\
			\nonumber
			&\quad + 
				i \int 
					e^{itS(l)}H(l-k) (l-k) 
						\frac{\diff}{\diff k}
							\left( \psi(k,l) \wT^+(k,l) \right) 
				\, dl 	.
	\end{align}
	As before we view each of the two integrals in the last two lines of \eqref{KPI.u1+.cp.g.k} as the product of $h_1(l) = e^{itS(l)}H(l-k)$ with a function $h_{2,1}$ or $h_{2,2}$ that depends on $k$ and $l$, where
	\begin{align*}
		h_{2,1}(l)	&=	\psi(k,l) \wT^+(k,l)	, \\
		h_{2,2}(l)	&=	(l-k) \frac{\diff}{\diff k}
							\left( \psi(k,l) \wT^+(k,l) \right).
	\end{align*} We use \eqref{KPI.u1+.fmult.h} and \eqref{KPI.g1.est} to conclude that
	\begin{align}
		\label{KPI.u1+.cp.gk.est}
			\norm[L^2]{\frac{\diff g}{\diff k}} 
				&\lesssim t^{-\frac12}
						\left(\,
							\norm[L^2_kL^2_l]{h_{2,1}} + 
							\norm[L^2_kL^2_l]{\frac{\diff^2 h_{2,1}}{\diff l^2}} 
						\right.\\
				\nonumber
				&\qquad \qquad +  	\left. 
							\norm[L^2_kL^2_l]{h_{2,2}} + 
							\norm[L^2_kL^2_l]{\frac{\diff^2 h_{2,2}}{\diff l^2}}
						\right) .
	\end{align}
\end{proof}

\begin{remark}
	\label{rem:u1+.cp.est}
	The bounds \eqref{KPI.u1+.cp.g.est} and  \eqref{KPI.u1+.cp.gk.est} are effective if we have $L^2_k L^2_l$  bounds on
	\begin{align}
	\label{KPI.u1+.needed.2}
		\psi \wT^+, 
		&& \frac{\diff}{\diff k}(\psi \wT^+),
		\\ 
		 	\frac{\diff^3}{\diff k \diff l^2}(\psi \wT^+),
		&&  \frac{\diff^2 }{\diff l^2}(\psi \wT^+), \nonumber
		\\
		 (l-k) \frac{\diff}{\diff k}(\psi \wT^+),
		 &&\frac{\diff^2}{\diff l^2}
		 	\left((l-k) \frac{\diff}{\diff k}(\psi \wT^+)\right).\nonumber
\end{align}
These bounds are proved in Proposition \ref{prop:Tpm.kl.L2}(iv).
\end{remark}

\subsubsection{Degenerate Stationary Phase}
\label{subsec:KPI.loc.degensp}

We now consider the case $|a| \leq c$. In this case, we have a refined estimate
for $\widehat{h_1}(\xi)$ (see \eqref{KPI.u1+.h1.def} and \eqref{KPI.u1+.h1.ft}):
\begin{equation}
	\label{KPI.loc.u1+.h1.ft.dcp}
		\left| \widehat{h_1}(\xi) \right| \lesssim_{\, c} t^{-\frac13}, \quad |a| < c
\end{equation}
as follows from Lemma \ref{lemma:partial.airy.dg}. 
In contrast to \eqref{KPI.g1.est}, this estimate is independent of $\xi$ but has a lower decay rate, corresponding to the fact that at  $a=0$, the phase function $S(l)$ has a single degenerate critical point. We will prove:

\begin{proposition}
	\label{prop:KPI.loc.u1.0}
	Suppose that $|a| \leq c$ for a fixed $c>0$, that  $u_0 \in \mathbf{E}_{1,w}$, and that conditions \eqref{u.C1}, \eqref{u.C2}, and \eqref{u.C2a} hold. Then
	\begin{equation}
		\label{KPI.loc.u1.0}
		u_1^+(t,x,y) \lesssim_{\, c} t^{-\frac23}.	
	\end{equation}
\end{proposition}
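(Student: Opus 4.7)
The plan is to recycle the Parseval-in-$k$ representation used in Proposition \ref{prop:KPI:u1.-}, with the uniform Airy bound \eqref{KPI.loc.u1+.h1.ft.dcp} replacing the sharper but region-dependent $t^{-1/2}$ estimate available in the nondegenerate setting. First, by the same cutoff argument as in the proof of Proposition \ref{prop:KPI:u1.-} (together with Remark \ref{rem:KPI.u1+.nocp.kl} to dispose of the region away from the critical set of $S$), it suffices to bound
$$I(t,x,y)=\frac{1}{\pi}\int_{\R^2} e^{itS(k,l;a)}\,i(l-k)\,\psi(k,l)\,\wT^+(k,l)\,dl\,dk$$
for $\psi\in C_0^\infty(\R^2)$ a cutoff around the (possibly merging) critical points of $S$, which in the regime $|a|\le c$ all lie in a bounded neighborhood of the origin. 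I would write $I=\int f(k)g(k)\,dk$ with $f(k)=e^{-itS(k)}$ and $g$ as in \eqref{KPI.u1+.gft.def}, and apply Parseval in $k$ to obtain $I=\int\widehat{f}(-\xi)\widehat{g}(\xi)\,d\xi$.

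The Airy representation \eqref{KPI.u1+.fft.def} together with the uniform bound $|\Ai|\lesssim 1$ gives $|\widehat{f}(-\xi)|\lesssim_{\,c}t^{-1/3}$ for all $\xi$, this being the whole point of the hypothesis $|a|\le c$. Hence
$$|I|\lesssim_{\,c} t^{-1/3}\,\|\widehat{g}\|_{L^1}\lesssim t^{-1/3}\bigl(\|g\|_{L^2_k}+\|\partial_k g\|_{L^2_k}\bigr),$$
where the last inequality follows from Cauchy--Schwarz with the weight $(1+\xi^2)^{-1/2}$. The whole proposition thus reduces to establishing $\|g\|_{H^1_k}\lesssim_{\,c} t^{-1/3}$.

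To obtain this remaining factor of $t^{-1/3}$, I would rerun the $l$-Parseval step of Proposition \ref{prop:KPI:u1.-}, substituting the uniform bound \eqref{KPI.loc.u1+.h1.ft.dcp} in place of \eqref{KPI.g1.est}. With $h_1,h_2$ as in \eqref{KPI.u1+.h1.def}--\eqref{KI.u1+.h2.def} this yields the pointwise estimate
$$|g(k)|\leq \|\widehat{h_1}\|_{L^\infty_\xi}\,\|\widehat{h_2}\|_{L^1_\xi}\lesssim_{\,c} t^{-1/3}\bigl(\|h_2\|_{L^2_l}+\|\partial_l h_2\|_{L^2_l}\bigr),$$
and, via the decomposition \eqref{KPI.u1+.cp.g.k} applied to $h_{2,1}=\psi\wT^+$ and $h_{2,2}=(l-k)\partial_k(\psi\wT^+)$, an analogous bound for $\partial_k g$. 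Taking $L^2_k$ norms of these pointwise bounds and invoking the $L^2_{k,l}$ control \eqref{KPI.u1+.needed.2} on $\psi\wT^+$ and its relevant derivatives, all of which is furnished by Proposition \ref{prop:Tpm.kl.L2}(iv), gives $\|g\|_{H^1_k}\lesssim_{\,c} t^{-1/3}$. Multiplication of the two estimates yields $|I|\lesssim_{\,c} t^{-2/3}$.

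I expect the main obstacle to be purely combinatorial bookkeeping: verifying that the same scattering-data bounds that sufficed in the nondegenerate case also suffice here, and that no factor of $t$ is leaked in the two successive Cauchy--Schwarz/Parseval maneuvers. The substantive input is a single observation, namely that the degenerate Airy bound $t^{-1/3}$ is applied \emph{twice}, once to $\widehat{f}$ and once to $\widehat{h_1}$, producing the exponent $-2/3$; crucially, no new weighted smoothness hypothesis on $u$ beyond $\mathbf{E}_{1,w}$ is required, since the set of scattering-data norms needed is identical to that of the nondegenerate case.
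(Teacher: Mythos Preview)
Your proposal is correct and follows essentially the same route as the paper: localize near the origin, apply Parseval in $k$ with the uniform Airy bound $|\widehat{f}(-\xi)|\lesssim t^{-1/3}$, reduce to $\|g\|_{H^1_k}\lesssim t^{-1/3}$, and obtain this by a second Parseval in $l$ using the uniform half-Airy estimate \eqref{KPI.loc.u1+.h1.ft.dcp}. The paper makes the same observation you do, that uniformity in $\xi$ of \eqref{KPI.loc.u1+.h1.ft.dcp} means only one $l$-derivative of $h_2$, $h_{2,1}$, $h_{2,2}$ is needed here (versus two in the nondegenerate case), so the scattering-data bounds of Remark~\ref{rem:u1+.cp.est} are more than sufficient.
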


\begin{proof}
Repeating the strategy of Subsection \ref{subsec:KPI.loc.nondegensp} we choose a cutoff function $\varphi \in C_0^\infty(\R^2)$ so that $\varphi=1$ for $k^2+l^2 < 4c^2$ and $\varphi=0$ for $k^2+l^2>16c^2$. It suffices to estimate
$$ I(t,x,y)= \frac{1}{\pi} \int f(k)g(k) \, dk $$
where
\begin{align*}
	f(k) &= e^{-itS(k)}, \\
	g(k) &= \int_k^\infty e^{itS(l)}i(l-k) \varphi(k,l) \wT^+(k,l) \, dl.
\end{align*}
We then have
$$ I(t,x,y) = \frac{1}{\pi} \int \widehat{f}(-\xi) \widehat{g}(\xi) \, d\xi $$
where $\widehat{f}$ and $\widehat{g}$ are given by \eqref{KPI.u1+.fft.def}--\eqref{KPI.u1+.gft.def} (where $\psi$ is replaced by $\varphi$). Owing to the assumption $|a| \leq c$ we only have the bound
\begin{equation}
	\label{KPI.u1+.dgp.f}
	\left| f(-\xi) \right| \lesssim t^{-1/3}, \quad |a| \leq c. 	
\end{equation}
Since the Airy function estimate is uniform in $\xi$ we conclude that
$$ |I(t,x,y)| \lesssim_{\, c} t^{-\frac13} \norm[L^2]{(1+\xi^2)^\frac12 \widehat{g}(\xi)},$$
so, once again, it suffices to show that
$$ \norm[L^2]{g}+ \norm[L^2]{\frac{\diff g}{\diff k}} \lesssim t^{-\frac13}. $$
Writing $g$ as in \eqref{KPI.u1+.g.id} (with $\psi$ replaced by $\varphi$), we can repeat the argument in Subsection \ref{subsec:KPI.loc.nondegensp} to conclude that 
\begin{equation}
		\label{KPI.u1+.dcp.g.est}
			\norm[L^2]{g}
				\lesssim t^{-\frac13}
					\left( \norm[L^2]{h_2} + \norm[L^2]{\frac{\diff h_2}{\diff l}} \right),
\end{equation}
and
\begin{align}
		\label{KPI.u1+.dcp.gk.est}
			\norm[L^2]{\frac{\diff g}{\diff k}} 
				&\lesssim t^{-\frac13}
						\left(\,
							\norm[L^2_kL^2_l]{h_{2,1}} + 
							\norm[L^2_kL^2_l]{\frac{\diff h_{2,1}}{\diff l}} 
						\right.\\
				\nonumber
				&\qquad \qquad +  	\left. 
							\norm[L^2_kL^2_l]{h_{2,2}} + 
							\norm[L^2_kL^2_l]{\frac{\diff h_{2,2}}{\diff l}}
						\right) \\
				\nonumber
\end{align}

where
\begin{align*}
	h_2(k,l)		&= h_{2,1}(k,l) + h_{2,2}(k,l),\\
	h_{2,1}(k,l) 	&= -i \varphi(k,l) \wT^+(k,l), \\
	h_{2,2}(k,l) 	&= i(l-k) \frac{\diff}{\diff k}\left( \varphi(k,l) \wT^+(k,l) \right).
\end{align*}
The lower order of $l$-derivatives as contrasted to \eqref{KPI.u1+.cp.gk.est} is due to the fact that the estimate \eqref{KPI.loc.u1+.h1.ft.dcp} is uniform in $\xi$. 

\end{proof}

\begin{remark}
	\label{rem:KPI.u1+.dcp.est}
	The estimate \eqref{KPI.u1+.dcp.g.est} and \eqref{KPI.u1+.dcp.gk.est} will be effective under the same assumptions as in Remark \ref{rem:u1+.cp.est}.
\end{remark}

\subsection{The Nonlocal Term}

\label{subsec:nonlocal}

In this subsection we will determine the large-time asymptotic behavior of the nonlocal term
\begin{align} 
	\label{u.recon.nonloc}
		u_2(t,x,y)
			&= u_{2,1}(t,x,y) + u_{2,2}(t,x,y)
	\intertext{where}
	\label{u.recon.21}
	u_{2,1}(t,x,y)
		&=	\frac{1}{\pi} 
					\int 
						e^{itS_0(k,l;\xi,\eta)} 
						i(l-k) f(k,l)	
						(\mu^l(l,x;y,t)-1)
					\, dl \, dk,
	\intertext{and}
	\label{u.recon.22}
	u_{2,2}(t,x,y)
		&=	\frac{1}{\pi}
					\int
						e^{itS_0(k,l;\xi,\eta)}
						f(k,l)
						\frac{\diff \mu^l}{\diff x}(l,x;y,t)
					\, dl \, dk.
\end{align}
The function $\mu^l$ solves the nonlocal Riemann-Hilbert problem
\eqref{mul.RHP}. Recall that 
\begin{equation}
	\label{Cauchy.bd}
	\norm[L^2_k \to L^2_k]{C_\pm} = 1.
\end{equation}

Recall that the case $a>0$ corresponds to no stationary points for $S$, and the case $a<0$ corresponds to nondegenerate stationary phase points for $S$.
We also define
\begin{align*}
	S(l)	&=	12al + 4l^3,\\
	S(k)	&=	12ak + 4k^3,
\end{align*}
and we will set
$$ P(l) = S'(l) = 12(a+l^2). $$
Finally, we write
\begin{equation}
	\label{wT.def}
		\wT^\pm(k,l) = T^\pm(k+\eta/12,l+\eta/12). 
\end{equation}

To analyze \eqref{u.recon.21} and \eqref{u.recon.22}, we will prove that
\begin{align}
		\label{norm.mul-1}
		\norm[L^2_l]{\mu^l - 1}
			&\lesssim	
				\begin{cases}
					t^{-1}, & a > \delta > 0, \\
					t^{-\frac13}, & |a| < \delta,\\
					t^{-\frac12}, & a < \delta < 0,
				\end{cases}
		\intertext{and}
		\label{norm.dmul.dx}
		\norm[L^2_l]{\dfrac{\diff \mu^l}{\diff x}}
			&\lesssim	
				\begin{cases}
					t^{-1}, & a > \delta > 0, \\
					t^{-\frac13}, & |a| < \delta,\\
					t^{-\frac12}, & a < \delta < 0.
				\end{cases}
\end{align}
Here and in what follows, the notation $a=o(1)$ means that $a \to 0$ as $t \to \infty$. Estimates with this condition help quantify the behavior of $u(t,x,y)$ in the transition region between the presence and absence of critical points.

We will also carry out a stationary phase analysis of the integrals 
\begin{align}
	\label{S.I1.def}
	I_1(t)	&=	\int e^{itS(l)}i(l-k)\wT^\pm(k,l) \, dl,\\
	\label{S.I2.def}
	I_2(t)	&=	\int e^{itS(l)}\wT^\pm(k,l) \, dl
\end{align} 
which yields the bounds
\begin{align}
	\label{norm.I1.k}
		\norm[L^2_k]{I_1(t)}
			&\lesssim	
				\begin{cases}
					t^{-1},	&	a> \delta > 0,\\
					t^{-\frac23}, & |a| < \delta,\\
					t^{-\frac12} &  a< -\delta < 0,	
				\end{cases}
		\intertext{and}
	\label{norm.I2.k}
		\norm[L^2_k]{I_2(t)}
			&\lesssim	
				\begin{cases}
					t^{-1},	&	a> \delta > 0,\\
					t^{-\frac23}, & |a| < \delta,\\
					t^{-\frac12} &  a< -\delta < 0	
				\end{cases}
\end{align}
where $\delta>0$ is fixed.
Given the bounds \eqref{norm.mul-1}, \eqref{norm.dmul.dx}, \eqref{norm.I1.k}, and \eqref{norm.I2.k}, we can prove:

\begin{theorem}
	\label{thm:u2}
	Suppose that $u \in \mathbf{E}_{1,w}$ and that conditions \eqref{u.C1}, \eqref{u.C2}, and \eqref{u.C2a} hold. Fix $\delta>0$. Then
	\begin{equation}
		\label{u2.bd}
			|u_2(t,x,y)| \lesssim 
				\begin{cases}
					t^{-2}, & a > \delta > 0,\\
					t^{-\frac23}, & |a| < \delta,\\
					t^{-1}, 	& a < -\delta < 0.	
				\end{cases}
	\end{equation}
\end{theorem}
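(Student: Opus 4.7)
The plan is to estimate $u_{2,1}$ and $u_{2,2}$ separately by Cauchy--Schwarz in the $l$-variable, pairing the $L^2_l$ decay of $\mu^l-1$ and $\diff\mu^l/\diff x$ from \eqref{norm.mul-1}--\eqref{norm.dmul.dx} against $L^2_l$ bounds on the transposed oscillatory $k$-integrals.

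First, I invoke the shift \eqref{kl.shift} so that the phase factors as $S(k,l;a)=S(l)-S(k)$ with $S(m)=12am+4m^3$, and split $e^{itS_0(k,l;\xi,\eta)}=e^{itS(l)}\,e^{-itS(k)}$. Performing the $k$-integration first and setting $\wT=\wT^++\wT^-$, I rewrite
\begin{align*}
u_{2,1}(t,x,y) &= \frac{1}{\pi}\int(\mu^l(l,x;y,t)-1)\,e^{itS(l)}\,J_1(l)\,dl,\\
u_{2,2}(t,x,y) &= \frac{1}{\pi}\int\frac{\diff\mu^l}{\diff x}(l,x;y,t)\,e^{itS(l)}\,J_2(l)\,dl,
\end{align*}
where
\begin{equation*}
J_1(l)=\int e^{-itS(k)}\,i(l-k)\,\wT(k,l)\,dk,\qquad J_2(l)=\int e^{-itS(k)}\,\wT(k,l)\,dk.
\end{equation*}
Cauchy--Schwarz in $l$ then yields
\begin{equation*}
|u_{2,1}|\leq \frac{1}{\pi}\,\|\mu^l-1\|_{L^2_l}\,\|J_1\|_{L^2_l},\qquad |u_{2,2}|\leq \frac{1}{\pi}\,\Bigl\|\frac{\diff\mu^l}{\diff x}\Bigr\|_{L^2_l}\,\|J_2\|_{L^2_l}.
\end{equation*}

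Next I show that $\|J_i\|_{L^2_l}$ obeys bounds of the same order as the $\|I_i\|_{L^2_k}$ bounds in \eqref{norm.I1.k}--\eqref{norm.I2.k}. The quantities $J_1,J_2$ are the $(k,l)$-transposes of $I_1,I_2$: the phase $-S(k)$ in $k$ has the same critical-point geometry as $S(l)$ in $l$ (nondegenerate points at $\pm r$ when $a=-r^2<0$, one degenerate point at $a=0$, none for $a>0$), and Proposition \ref{prop:Tpm.kl.L2} provides matching mixed-derivative control on $\wT^\pm$. Therefore the dyadic and integration-by-parts analyses of Propositions \ref{prop:KPI.loc.u1.+}, \ref{prop:KPI:u1.-}, and \ref{prop:KPI.loc.u1.0} apply after interchanging the roles of $k$ and $l$ to deliver the three-regime bounds on $\|J_1\|_{L^2_l}$ and $\|J_2\|_{L^2_l}$.

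Multiplying the two factors then produces $t^{-1}\cdot t^{-1}=t^{-2}$ for $a>\delta$, $t^{-1/3}\cdot t^{-2/3}=t^{-1}\lesssim t^{-2/3}$ for $|a|<\delta$, and $t^{-1/2}\cdot t^{-1/2}=t^{-1}$ for $a<-\delta$, matching \eqref{u2.bd}. The main obstacle is essentially bookkeeping: one must verify that every amplitude-derivative norm of $\wT^\pm$ consumed in the $L^2_k$ stationary-phase proofs --- in particular the quantities listed in \eqref{KPI.u1+.needed.1}--\eqref{KPI.u1+.needed.2} --- has an honest $k$--$l$ symmetric counterpart. The derivative list \eqref{Tpm.est05} is arranged so that this transposition is essentially automatic, but confirming each case requires patience rather than any new analytical ideas.
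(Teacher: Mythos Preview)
Your proposal is correct and takes essentially the same approach as the paper: the paper also exploits that $\mu^l$ depends only on $l$, performs the oscillatory analysis in the $k$-variable (integration by parts in $k$ for $a>\delta$, Fourier/Airy analysis in $k$ with cutoffs near $k=\pm r$ for $a<-\delta$, degenerate Airy for $|a|<\delta$), and pairs the result against $\|\mu^l-1\|_{L^2_l}$ and $\|\partial_x\mu^l\|_{L^2_l}$ via Cauchy--Schwarz. The only caveat is that the single-integral $L^2$ bounds you need for $J_1,J_2$ are the $k$-transposed analogues of Lemmas~\ref{lemma:CT.1} and~\ref{lemma:CdTdX.1} (not of Propositions~\ref{prop:KPI.loc.u1.+}--\ref{prop:KPI.loc.u1.0}, which are pointwise double-integral estimates), and the required global $\partial_k T^\pm$ control is not literally contained in Proposition~\ref{prop:Tpm.kl.L2} but is supplied directly in the paper's proof using Proposition~\ref{prop: mu-k}---so your ``bookkeeping'' remark is apt, but the citations should point there.
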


\begin{proof}
	These bounds follow from the Cauchy-Schwartz inequality applied to \eqref{u.recon.21} and \eqref{u.recon.22} and the estimates \eqref{norm.mul-1}--\eqref{norm.I2.k}.
Notice that $\mu^l$ is not a function of $k$. Thus we can only deal with the stationary point in $k$. We divide our proof into three parts:
\begin{itemize}
    \item[(i)] $a > \delta > 0$.
For simplicity we only deal with the $+$ part. First
notice that
\begin{align}
  & \frac{\partial}{\partial k}\left( \int_{\mathbb{R}} (l-k)\widetilde{u} ( l-k ; y')  e^{i (l^2- k^2) y'}d y'\right)\\
  \nonumber
   &\quad =-\int_{\R}  \widetilde{u}(l-k ; y')e^{i (l^2- k^2) y'} d y'\\
           \nonumber
          &\qquad  + \int_{\R}  (l-k)  \frac{\partial}{\partial k}\widetilde{u}(l-k, y')   e^{i (l^2- k^2) y'}d y'\\
           \nonumber
 &\qquad  - 2i\int_{\R}   (l-k) \widetilde{u}(l-k, y')kye^{i (l^2- k^2) y'} d y'.
          \end{align}
\begin{align}
  & \frac{\partial}{\partial k}\left( \int_{\mathbb{R}} (l-k)\widetilde{u} *\tmu_\sharp^{ +}(k, l-k ; y')  e^{i (l^2- k^2) y'}d y'\right)\\
  \nonumber
   &\quad =\frac{\partial}{\partial k}\int_{\R} \int_\R (l-k)\widetilde{u}(l-l'-k, y') \tmu_\sharp^+(k, l' ; y') dl'e^{i (l^2- k^2) y'} d y'\\
   \nonumber
   &\quad =-\int_{\R} \int_\R  \widetilde{u}(l-l'-k ; y')\tmu_\sharp^+(k,l', y') dl'e^{i (l^2- k^2) y'} d y'\\
           \nonumber
           &\qquad +\int_{\R} \int_\R (l-k) \frac{\partial}{\partial k} \left[\tmu_\sharp^+(k, l', y') \right]\widetilde{u}(l-l'-k ; y')dl'e^{i (l^2- k^2) y'} d y'\\
           \nonumber
           &\qquad  + \int_{\R} \int_\R  (l-k)  \frac{\partial}{\partial k}\widetilde{u}(l-l'-k, y') \tmu_\sharp^+(k, l' ; y') dl' e^{i (l^2- k^2) y'}d y'\\
           \nonumber
 &\qquad  - 2i\int_{\R} \int_\R  (l-k) \widetilde{u}(l-l'-k, y')\tmu_\sharp^+(k, l' ; y')dl'kye^{i (l^2- k^2) y'} d y'.
          \end{align}
For $u_{2,1}(t; x,y)$, integrating by parts in $k$ leads to
\begin{align}
  &  \left\vert \frac{1}{it}\int_{\mathbb{R}}\int_k^\infty \frac{e^{itS_0(k,l; \xi, \eta)}}{\partial_k S_0}  \frac{\partial}{\partial k}\left( \int_{\mathbb{R}} (l-k)\widetilde{u}( l-k ; y')  e^{i (l^2- k^2) y'}d y'\right) (\mu^l(l, x,y) -1) dl dk\right\vert \\
  \nonumber
  &\lesssim \frac{1}{t}     \left(\Norm{\widetilde{u}}{L^{2,-1}_l L^{2}_y}+\Norm{\frac{\partial}{\partial l}\widetilde{u}}{L^{2,1}_l L^{2}_y} + \Norm{\frac{\partial}{\partial l}\tu}{L^{2,1}_l L^{2,1}_y}\right) \times \Norm{\mu^l-1}{L^2_l}\\
  \nonumber
  &=\mathcal{O}(t^{-2}).
\end{align}

\begin{align}
   \Biggl\vert \frac{1}{it}\int_{\mathbb{R}}\int_k^\infty &\frac{e^{itS_0(k,l; \xi, \eta)}}{\diff_k S_0}  
    \frac{\partial}{\partial k}\left( \int_{\mathbb{R}} (l-k)\widetilde{u}* \tmu_\sharp^+( l-k ; y')  e^{i (l^2- k^2) y'}d y'\right) (\mu^l(l, x,y) -1) dl dk\Biggr\vert \\
  \nonumber
  &\lesssim \frac{1}{t}  \Bigl(\Norm{\tmu_\sharp^+}{L^2_k L^{2,1}_l}\Norm{\widetilde{u}}{L^{2,1}_l L^{2,2}_y}+\Norm{\widetilde{u}}{L^{2,1}_l L^{2,2}_y}\\
  \nonumber &\quad + \Norm{\frac{\partial}{\partial l}\tu}{L^{2,1}_l L^{2,1}_y}\Norm{\tmu_\sharp^+}{L^2_k L^{2,1}_l }\Bigr)\\
  \nonumber
  & \quad \times \Norm{\mu^l-1}{L^2_l}\\
  \nonumber
  &=\mathcal{O}(t^{-2}).
\end{align}
So we arrive at 
\begin{equation}
    \left\vert  u_{2,1}(t; x,y)\right\vert\lesssim t^{-2}.
\end{equation}
Similarly, we can deduce that 
\begin{align}
    \left\vert u_{2,2}(t; x,y)\right\vert&\lesssim  \left\vert \frac{1}{it}\int_{\mathbb{R}}\int_k^\infty \frac{e^{itS_0(k,l; \xi, \eta)}}{\partial_k S_0}  \frac{\partial}{\partial k}\left( \int_{\mathbb{R}}\widetilde{u}( l-k ; y')  e^{i (l^2- k^2) y'}d y'\right) \frac{\partial}{\partial x}(\mu^l(l, x,y) ) dl dk\right\vert\\
    \nonumber
    &+ \left\vert \frac{1}{it}\int_{\mathbb{R}}\int_k^\infty \frac{e^{itS_0(k,l; \xi, \eta)}}{\partial_k S_0}  \frac{\partial}{\partial k}\left( \int_{\mathbb{R}}\widetilde{u}*\tmu_\sharp^+( l-k ; y')  e^{i (l^2- k^2) y'}d y'\right) \frac{\partial}{\partial x}(\mu^l(l, x,y) ) dl dk\right\vert\\
    \nonumber
    &\lesssim  \frac{1}{t}     \left(\Norm{\widetilde{u}}{L^{2,-1}_l L^{2,1}_y}+\Norm{\frac{\partial}{\partial l}\widetilde{u}}{L^{2,-1}_l L^{2,}_y} \right) \times \Norm{\frac{\partial}{\partial x}(\mu^l(l, x,y) )}{L^2_l}\\
    \nonumber
   & + \frac{1}{t}  \Bigl(\Norm{\tmu_\sharp^+}{L^2_k L^{2,1}_l}\Norm{\widetilde{u}}{L^{2,1}_l L^{2,2}_y}+\Norm{\frac{\partial}{\partial k}\tmu_\sharp^+}{L^{2,1}_l}\Norm{\widetilde{u}}{L^{2,1}_l L^{2,2}_y}\\
  \nonumber &\quad + \Norm{\frac{\partial}{\partial l}\tu}{L^{2,1}_l L^{2,1}_y}\Norm{\tmu_\sharp^+}{L^2_k L^{2,1}_l }\Bigr) \times\Norm{\frac{\partial}{\partial x}(\mu^l(l, x,y) )}{L^2_l}\\
  \nonumber
  &\lesssim t^{-2}.
\end{align}

\item[(ii)] $a<-\delta <0$. Again for simplicity we only deal with the $+$ part. We first introduce cutoff functions $\psi_\pm$ with disjoint supports that localize near critical points of $S_0$. More specifically,

\begin{align}
\label{cutoff-k}
    \psi_\pm(k)=\begin{cases} 1, & |l\pm r|\leq r/2,\\
                          0, & |l\pm r| \geq 3r/2.
    \end{cases}
\end{align}
We also set 
	$$ \psi_\infty= 1 - \psi_+ - \psi_- .$$
We now have
\begin{align*}
		u_{2,1}(t; x,y)
			&=	u_{2,1,1}(t; x,y)+u_{2,1,2}(t; x,y)	
		\intertext{where}
		u_{2,1,1}(t; x,y)	&=	 -\frac{1}{2\pi} \int_k^\infty e^{itS_0(k,l)} (l-k)T^+(k,l) \psi_\infty(l) \left(\mu^l-1\right)\, dldk,
		\intertext{and}
		u_{2,1,2}(t; x,y)		&=	-\frac{i}{2\pi} \int_k^\infty e^{itS_0(k,l)} T^+(k,l)(\psi_+(k) + \psi_-(k)) \left(\mu^l-1\right)\, dldk.
\end{align*}
We first point out that for $I_1$ we can again integrate by parts as in (i) and obtain 
\begin{equation}
     |u_{2,1,1}(t; x,y)|\lesssim t^{-3/2}.
\end{equation}
 Also as in the proof of Proposition \ref{prop:KPI:u1.-}, we rewrite $u_{2,1,2}(t; x,y)$ as:
\begin{align}
  u_{2,1,2}(t; x,y)=&  \int  \mathcal{F}^{-1}_k \left((\psi_+(k) + \psi_-(k)) \int {T}_{x,y}^+(k,l)i(l-k) \left(\mu^l-1\right) dl\right) \\
  \nonumber
    &\times \left( \frac{1}{\sqrt{2\pi}} \int e^{-i\xi_2 k}e^{-it(12ak+4k^3)} dk \right)d{\xi_2}.
    \end{align}
By standard \textit{Fourier} theory, we need to obtain the following 
\begin{equation}
\mathcal{F}^{-1}_k \left((\psi_+(k) + \psi_-(k))\int {T}_{x,y}^+(k,l)i(l-k) \left(\mu^l-1\right) dl\right) \in L^{2,1}_k
\end{equation}
which is equivalent to showing 
\begin{equation}
    \frac{\partial}{\partial k} \left[(\psi_+(k) + \psi_-(k)){T}_{x,y}^+(k,l)i(l-k) \right] \in L^2_k.
\end{equation}
And this can be deduced from the same argument as in (i).
Recall that $a:=-r^2$. For $t\gg 1$, choose $\xi_2$ such that  $-r^2-\xi_2/12t<-r^2/2$, we obtain:
\begin{align}
|u_{2,1,2}&(t; x,y)|=\\
\nonumber
   & \Biggl\vert 
        \int  \mathcal{F}^{-1}_k 
            \left(
                (\psi_+(k) + \psi_-(k)) 
                \int 
                    {T}_{x,y}^+(k,l)i(l-k) \left(\mu^l-1\right) dl\right)  \\
    \nonumber   
        & \qquad
            \left( 
                \frac{1}{\sqrt{2\pi}} 
                \int e^{-i\xi_2 k}e^{-it(12ak+4k^3)} \,dk \right) \, d{\xi_2} 
    \Biggr\vert \\
   \nonumber
    &   \lesssim 
        \norm[L^{2,1}_{ (\xi_2 > -6r^2 t)}]{\mathcal{F}^{-1}_k \left((\psi_+(k) + \psi_-(k)) \int {T}_{x,y}^+(k,l)i(l-k) \left(\mu^l-1\right)  dl\right) }\\
    \nonumber
    & \quad \times \left\vert \frac{1}{\sqrt{2\pi}} \int e^{-i\xi_2 k}e^{-it(12ak+4k^3)} dk \right\vert\\
    \nonumber
     & \quad +
            \left( 
                \int_{\lbrace \xi_2\leq -6r^2 t \rbrace} \xi_2^{-2} d\xi_1     
            \right)^{1/2}\\
            \nonumber
    &\qquad \times \norm[L^{2,1}_{ (\xi_2\leq -6r^2 t)}]{\mathcal{F}^{-1}_k \left((\psi_+(k) + \psi_-(k))\int {T}_{x,y}^+(k,l)i(l-k) \left(\mu^l-1\right)  dl\right) }\\[5pt]
     \nonumber
      & \quad  \lesssim t^{-1}.
\end{align}
\end{itemize}
We can similarly obtain that $|u_{2,2}|\lesssim t^{-1}$ and from these we deduce $|u_2(t;x,y)|\lesssim t^{-1}$.
\item[(iii)] $|a|<\delta $. We let $\psi_0$ be a smooth cutoff function such that 
\begin{align}
\label{cutoff-k-0}
    \psi_0(l)=\begin{cases} 1, & |k|\leq 1,\\
                          0, & |k| \geq 2.
    \end{cases}
\end{align}
We will also need 
\begin{align}
	\label{Airy.id.again}
    \frac{1}{\sqrt{2\pi}} \int e^{-i\xi k}e^{-it(12ak+4k^3)} dl = \frac{\sqrt{2\pi}}{(12t)^{\frac13}} \Ai\left((12t)^\frac23(a -\xi/12t )\right)
\end{align}
and Appendix \ref{App:Airy}.
We then  follow the same proof as the proof of the case $a<\delta<0$ to deduce that $|u_2(t;x,t)|\lesssim t^{-2/3}$.

 The estimates \eqref{norm.mul-1}--\eqref{norm.I2.k} are proved in Proposition \ref{prop:RHP.bounds} and Remarks \ref{rem:I1-I2.nocp.est} and \ref{rem:I2.bd}.
\end{proof}

In what follows, we show that the Riemann-Hilbert problem \eqref{mul.RHP} has a unique solution for $u \in \mathbf{E}_{1,w}$ so that \eqref{u.C1} and \eqref{u.C2} hold. We also obtain the estimates \eqref{norm.mul-1} and \eqref{norm.dmul.dx} (see Proposition \ref{prop:RHP.bounds}). A byproduct of our analysis is the proof of estimates \eqref{norm.I1.k} and \eqref{norm.I2.k} (see Remarks \ref{rem:I1-I2.nocp.est} and \ref{rem:I2.bd}).

\begin{lemma}
	Suppose that $u \in \mathbf{E}_{1,w}$, $c < 1$, and  $\norm[L^2_y L^{2,-1}_l]{\tu} < \dfrac14(1-c)$.
	Then, the resolvent bound 
	\begin{equation}
		\label{res.CT.est}
			\norm[L^2_k \to L^2_l]
				{(I-\calC_T)^{-1}} < 2	
	\end{equation}
	holds uniformly in $t,x,y$.
	Morever, for each $(t,x,y)$, the Riemann-Hilbert problem \eqref{mul.RHP} has a unique solution with  $\mu^l - 1 \in L^2(\R,dl)$. Finally, the estimate
	\begin{equation}
		\label{mul.norm}
			\norm[L^2_l]{\mu^l-1} \leq 2 \norm[L^2_l]{\calC_T(1)}
	\end{equation}
	holds uniformly in $x,y,t$, where the right-hand side is bounded provided $u \in \mathbf{E}_{1,w}$.
\end{lemma}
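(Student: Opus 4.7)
My plan is to reduce the resolvent bound to an operator norm estimate on $\calT^\pm$ via the Neumann series, exploiting the assumed smallness of $\tu$. Since $C_\pm$ are contractions on $L^2(\R)$ by \eqref{Cauchy.bd}, the triangle inequality gives
\[
\norm[L^2 \to L^2]{\calC_T} \leq \norm[L^2 \to L^2]{\calT^+} + \norm[L^2 \to L^2]{\calT^-}.
\]
To estimate each $\calT^\pm$, I note that its integral kernel is $e^{itS_0(k,l;\xi,\eta)} T^\pm(k,l)$, whose modulus equals $|T^\pm(k,l)|$ uniformly in $(t,x,y)$. A direct Cauchy--Schwarz inequality in $l$ then yields the Hilbert--Schmidt-type bound
\[
\norm[L^2 \to L^2]{\calT^\pm} \leq \norm[L^2(\R^2)]{T^\pm},
\]
and Proposition \ref{prop:Tpm.exist} supplies $\norm[L^2]{T^\pm} \leq (1-c)^{-1}\norm[L^2_y L^{2,-1}_l]{\tu}$. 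The small-norm hypothesis $\norm[L^2_y L^{2,-1}_l]{\tu} < (1-c)/4$ therefore forces $\norm[L^2 \to L^2]{\calT^\pm} < 1/4$, whence $\norm[L^2 \to L^2]{\calC_T} < 1/2$ uniformly in $(t,x,y)$. A Neumann series then produces $(I-\calC_T)^{-1}$ with operator norm strictly less than $2$, establishing \eqref{res.CT.est}.

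For unique solvability of \eqref{mul.RHP}, I set $w := \mu^l - 1$, so that the nonlocal RHP becomes $(I-\calC_T)w = \calC_T(1)$. It therefore suffices to verify $\calC_T(1) \in L^2(\R)$, after which $w = (I-\calC_T)^{-1}\calC_T(1)$ is the unique $L^2$ solution and $\norm[L^2]{w} \leq 2\norm[L^2]{\calC_T(1)}$ follows at once. Since $C_\pm$ are bounded on $L^2$, membership $\calC_T(1) \in L^2$ reduces to showing $\calT^\pm(1) \in L^2$ for the explicit integral
\[
(\calT^\pm 1)(k) = \int e^{itS_0} T^\pm(k,l)\, dl.
\]
By Cauchy--Schwarz applied with the weight $(1+(l-k)^2)^{-1}$, one has $|(\calT^\pm 1)(k)|^2 \lesssim \int (1+(l-k)^2)|T^\pm(k,l)|^2\, dl$, and integrating over $k$ gives $\norm[L^2]{\calT^\pm 1}^2 \lesssim \norm[L^2(\R^2)]{(1+(l-k)^2)^{1/2} T^\pm}^2$. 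This last quantity is finite by Proposition \ref{prop:Tpm.L2} with $j=1$, $j'=0$, whose hypotheses on $\tu$ are supplied by the definition of $\mathbf{E}_{1,w}$.

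The only real subtlety is notational: the constant function $1$ is not in $L^2(\R)$, so $\calT^\pm(1)$ and $\calC_T(1)$ cannot be obtained from the $L^2$-boundedness of these operators and must instead be interpreted pointwise via the explicit integrals, with $L^2$-membership verified directly. The weighted $L^2$ estimate on $T^\pm$ in Proposition \ref{prop:Tpm.L2} is tailor-made for this verification. Once that is in place, everything else is a standard small-contraction argument, and the claimed uniformity in $(t,x,y)$ is automatic because the Hilbert--Schmidt norm of $\calT^\pm$ depends only on $|T^\pm|$ and not on the unimodular phase $e^{itS_0}$.
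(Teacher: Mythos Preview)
Your proof is correct and follows essentially the same approach as the paper: both bound $\norm[L^2\to L^2]{\calT^\pm}$ by the Hilbert--Schmidt norm $\norm[L^2(\R^2)]{T^\pm}$, invoke Proposition~\ref{prop:Tpm.exist} and the small-norm hypothesis to force $\norm{\calC_T}<1/2$, and then apply a Neumann series. Your verification that $\calC_T(1)\in L^2$ via Cauchy--Schwarz with the weight $(1+(l-k)^2)^{-1}$ and Proposition~\ref{prop:Tpm.L2} is slightly more transparent than the paper's $L^2_kH^1_l$ formulation, but the underlying mechanism is the same.
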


\begin{proof}
From the estimate 
\begin{align}
	\norm[L^2(\R^2)]{T^\pm} &\leq \frac{1}{1-c} \norm[L^2_y L^{2,-1}_l]{\tu}	,
\end{align}
it follows that
$$ \norm[L^2_k \to L^2_l]{\calT^\pm} \leq \frac{1}{1-c} \norm[L^2_y L^{2,-1}_l]{\tu}	$$
since the Hilbert-Schmidt norm bounds the operator norm. Using
\eqref{Cauchy.bd},
we conclude that
\begin{equation}
	\label{CT.norm}
		\norm[L^2 \to L^2]{\calC_T} \leq \frac{2}{1-c} \norm[L^2_y L^{2,-1}_l]{\tu}.
\end{equation}
Hence
\begin{equation}
	\label{CT.norm.small}
		\norm[L^2 \to L^2]{\calC_T } < \frac{1}{2} \text{  whenever }
			\norm[L^2_y L^{2,-1}_l]{\tu} < \frac{1-c}{4}
\end{equation}
so that \eqref{res.CT.est} holds.

Next, we have the estimate
\begin{equation}
	\label{CT1.pre}
		\norm[L^2_l]{\calC_T (1)} \lesssim \norm[L^2_k H^1_l]{T^+} + \norm[L^2_k H^1_l]{T^-}
\end{equation}
where the right-hand side of \eqref{CT1.pre} is bounded by
$$ \sum_{\eps \in \{+,-\}}\norm[L^2_k H^1_l]{T^\eps} + \norm[L^2_k H^1_l]{(l-k) T^\eps}$$
owing to the formulas
$$ \calT^\pm(1) = \pm \int_k^{\pm \infty} e^{itS_0(k,l;\xi,\eta)} T^+(k,l) \, dl $$
and \eqref{Cauchy.bd}.
The right-hand norms are bounded under the hypotheses of the lemma.

We can now use the solution formula
\begin{equation}
	\label{mul.sol}
		\mu^l - 1 = (I-\calC_T)^{-1} \calC_T (1) 
\end{equation}
and the estimates \eqref{CT.norm.small}  and \eqref{CT1.pre} to construct a solution with $\mu^l-1 \in L^2_l(\R)$ with $L^2_l$ norm estimated uniformly in $x,y,t$. Uniqueness follows from the boundedness of the resolvent.
\end{proof}

Having established the existence and uniqueness of $\mu^l$, we can turn attention to proving the bounds \eqref{norm.mul-1} and \eqref{norm.dmul.dx}. We will prove:

\begin{proposition}
	\label{prop:RHP.bounds}
	Suppose that $u \in \mathbf{E}_{1,w}$ and that the conditions 
	\eqref{u.C1}, \eqref{u.C2}, and \eqref{u.C2a} hold. Then the estimates \eqref{norm.mul-1} and \eqref{norm.dmul.dx} hold.
\end{proposition}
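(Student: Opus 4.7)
The plan is to invert the nonlocal Riemann-Hilbert problem \eqref{mul.RHP} using the solution formula \eqref{mul.sol} together with the uniform resolvent bound \eqref{res.CT.est}, and then to reduce both of the claimed estimates to stationary-phase bounds on operators of the form $\calT^\pm(1)$ and $\partial_x\calC_T$ applied to fixed data. These bounds can be read off from the local-term analysis in \S\ref{subsec:local}.

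For the bound on $\|\mu^l-1\|_{L^2_l}$, the solution formula and the $L^2$-boundedness of $C_\pm$ give
$$ \|\mu^l-1\|_{L^2_l} \leq 2\|\calC_T(1)\|_{L^2_l} \lesssim \|\calT^+(1)\|_{L^2_k} + \|\calT^-(1)\|_{L^2_k}. $$
After the shift \eqref{kl.shift}, $\calT^+(1)(k) = e^{-itS(k)}\int_k^\infty e^{itS(l)}\wT^+(k,l)\,dl$, a $k$-parametrised variant of $I_2$ from \eqref{S.I2.def}. I would carry out the same three-case analysis as in \S\ref{subsec:local}: for $a>\delta$, one integration by parts in $l$ using $S'(l)=12(a+l^2)\geq 12\delta$ and the weighted bounds \eqref{Tpm.est01}--\eqref{Tpm.est03} yields $t^{-1}$; for $a=-r^2<-\delta$, localising at the nondegenerate critical points $l=\pm r$ with cutoffs as in \S\ref{subsec:KPI.loc.nondegensp} and applying Plancherel with the Airy bound of Lemma \ref{lemma:partial.airy.ndg} gives $t^{-1/2}$, the required $L^2_kL^2_l$ norms being supplied by Proposition \ref{prop:Tpm.kl.L2}(iv); for $|a|<\delta$, the uniform Airy bound of Lemma \ref{lemma:partial.airy.dg}, applied as in Proposition \ref{prop:KPI.loc.u1.0}, delivers $t^{-1/3}$.

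For $\|\partial_x\mu^l\|_{L^2_l}$, I differentiate $\mu^l = 1 + \calC_T\mu^l$ in $x$. Since $\partial_x e^{itS_0} = i(l-k)e^{itS_0}$, the operator $\partial_x\calC_T$ has the same structure as $\calC_T$ with amplitude $i(l-k)T^\pm$ in place of $T^\pm$, and
$$ \frac{\partial\mu^l}{\partial x} = (I-\calC_T)^{-1}\bigl[(\partial_x\calC_T)(1) + (\partial_x\calC_T)(\mu^l-1)\bigr]. $$
The first inhomogeneous term is an $I_1$-type integral \eqref{S.I1.def}, controlled by the same three-case analysis and yielding rates $t^{-1}$, $t^{-2/3}$, $t^{-1/2}$. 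The second is bounded by the Hilbert-Schmidt norm of $\partial_x\calT^\pm$, uniformly controlled by $\|(l-k)T^\pm\|_{L^2(\R^2)}$ via \eqref{Tpm.est0} and Proposition \ref{prop:Tpm.L2}, multiplied by the previously obtained bound on $\|\mu^l-1\|_{L^2_l}$. Summing the two contributions gives the claimed rates.

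The main obstacle is the transition region $|a|<\delta$, where both integration by parts and nondegenerate stationary phase fail and one must rely on the uniform Airy bound. In contrast to the local term $u_1^+$, the present integrand is not paired with an additional $k$-integration that would permit Plancherel to squeeze out the sharper $t^{-2/3}$, so only $t^{-1/3}$ is available for $\mu^l-1$. Verifying that all required weighted $L^2$ norms of $\wT^\pm$ and its derivatives on the cutoff region close in this regime is the key technical point, and it is for this that the full strength of the space $\mathbf{E}_{1,w}$ and Proposition \ref{prop:Tpm.kl.L2}(iv) are used.
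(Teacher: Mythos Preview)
Your proposal is correct and follows essentially the same route as the paper: use the solution formula \eqref{mul.sol} and the resolvent bound \eqref{res.CT.est} to reduce \eqref{norm.mul-1} to estimating $\norm[L^2_k]{\calT^\pm(1)}$, differentiate the integral equation and use the Hilbert--Schmidt bound \eqref{dTdX.est} to reduce \eqref{norm.dmul.dx} to estimating $\norm[L^2_k]{(\partial_x\calT^\pm)(1)}$, and then carry out the three-case stationary-phase analysis (the paper packages these as Lemmas \ref{lemma:CT.1} and \ref{lemma:CdTdX.1}). One small overclaim: in the transition region $|a|<\delta$, the single $l$-integral $(\partial_x\calT^\pm)(1)$ only yields $t^{-1/3}$ via Lemma \ref{lemma:partial.airy.dg}, not $t^{-2/3}$ (the latter rate in \S\ref{subsec:KPI.loc.degensp} comes from the \emph{double} integral over $k$ and $l$), but this is harmless since your second contribution already sits at $t^{-1/3}$ and the target bound \eqref{norm.dmul.dx} is $t^{-1/3}$ there.
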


\begin{proof}
We given the proof modulo Lemmas \ref{lemma:CT.1} and \ref{lemma:CdTdX.1}	in what follows.

First, from the solution formula \eqref{mul.sol}, we have 
\begin{equation}
	\label{mul-1.est.pre}
		\norm[L^2_l]{\mu^l - 1}
			\leq 2 \norm[L^2_l]{\calC_T(1)}.	
\end{equation}

Second, we claim that
\begin{equation}
	\label{dmul.dx.est.pre}
		\norm[L^2_l]{\frac{\diff \mu^l}{\diff x}}
		\leq 	
		2 \norm[L^2_l]{\calC_{\diff T/\diff x}(1)} +
				4 \norm[L^2 \to L^2]{\calC_{\diff T/\diff x}} \norm[L^2_l]{\calC_T(1)}.
\end{equation}
To see this, note that the function $\dfrac{\diff \mu^l}{\diff x}$ satisfies the integral equation
\begin{align}
	\frac{\diff \mu^l}{\diff x}
			&=	\calC_{\diff T/\diff x}(\mu^l) +
				\calC_T\left( \frac{\diff \mu^l}{\diff x} \right)
	\intertext{where}
	\label{C.dTdX.op}
		\calC_{\diff T/\diff x}(f) 
			&=	C_+ \frac{\diff \calT^-}{\diff x} f +
				C_- \frac{\diff \calT^+}{\diff x} f,
	\intertext{and}
	\label{dTdX.op}
		\left( \frac{\diff \calT^\pm}{\diff x} \right)(f)(k)
			&= \pm \int_k^{\pm \infty} e^{itS} i(l-k) T^\pm(k,l) f(l) \, dl 
\end{align}
Note that 
\begin{equation}
	\label{dTdX.est}	
		\norm[L^2_l \to L^2_k]{\frac{\diff \calT^\pm}{\diff x}}
	\leq \norm[L^2_k L^2_l]{(l-k)T^\pm}.
\end{equation}
Since the right-hand side is the Hilbert-Schmidt norm of the integral operator \eqref{dTdX.op}. Hence
\begin{equation}
	\label{C.dTdX.est}
		\norm[L^2_l \to L^2_l]{\calC_{\diff T/\diff x}} \leq 2 \norm[L^2_k L^2_l]{(l-k)T^\pm}.
\end{equation}
It follows that
\begin{align}
	\label{dmul.dx.eqn}
		\frac{\diff \mu^l}{\diff x}
			&=	(I-\calC_T)^{-1} \calC_{\diff T/\diff x}(\mu^l)\\
			&=	(I-\calC_T)^{-1}\calC_{\diff T/\diff x}(1) +(I-\calC_T)^{-1}\calC_{\diff T/\diff x}(I-\calC_T)^{-1}\calC_T(1)
			\nonumber	
\end{align}
where in the second step we used \eqref{mul.sol}. From the resolvent bound \eqref{res.CT.est} we conclude that \eqref{dmul.dx.est.pre} holds.

It follows from \eqref{mul-1.est.pre} and \eqref{dmul.dx.est.pre} that, to obtain the bounds \eqref{norm.mul-1} and \eqref{norm.dmul.dx}, it suffices to estimate $\norm[L^2_l]{\calC_T(1)}$ and $\norm[L^2_k]{\calC_{\diff T/\diff x}(1)}$ in each of the cases $a>\delta>0$ (no stationary points), $|a| < \delta$ (transition region), and  $a<\delta<0$ (nondegenerate stationary points). These estimates are proved in Lemmas \ref{lemma:CT.1} and \ref{lemma:CdTdX.1}.
\end{proof}
 
Now we turn to the estimates on $\calC_T(1)$ and $\calC_{\diff T/\diff x}(1)$.

\begin{lemma}
	\label{lemma:CT.1}
	Suppose that $u \in \mathbf{E}_{1,w}$ and the conditions
	\eqref{u.C1}, \eqref{u.C2}, and \eqref{u.C2a} hold. Fix $\delta>0$. Then, the estimates
	\begin{equation}
	\label{C.T.1.est}
		\norm[L^2_k]{\calC_T(1)}
			\lesssim
			\begin{cases}
				t^{-1}, 			& a > \delta > 0,\\
				t^{-\frac13},	& |a| < \delta,\\
				t^{-\frac12}, 	& a < -\delta < 0.
			\end{cases}
	\end{equation}
	hold.
\end{lemma}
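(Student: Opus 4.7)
The plan is to bound $\norm[L^2_k]{\calC_T(1)}$ by reducing to oscillatory integrals in the variable $l$ alone, and then running stationary phase adapted to each of the three regimes of $a$. Since $\norm[L^2_k \to L^2_k]{C_\pm} = 1$, it suffices to prove the same bounds for $\norm[L^2_k]{\calT^\pm(1)}$. Applying the shift \eqref{kl.shift}, the Heaviside factor built into $T^\pm$, and the unimodular multiplier $e^{-itS(k)}$ (which preserves $L^2_k$), the problem reduces to estimating
\[
J^\pm(k) = \int_k^{\pm \infty} e^{itS(l)} \wT^\pm(k,l)\, dl
\]
in $L^2_k$, where $S(l) = 12al + 4l^3$ has $S'(l) = 12(a+l^2)$. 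I will describe the three cases in turn, treating $J^+$ for definiteness since $J^-$ is symmetric.

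For $a > \delta > 0$, $S'(l)$ is bounded below by $12\delta$ on $\R$, so a single integration by parts in $l$ converts $J^+(k)$ into the boundary contribution at $l=k$ together with an integrated term involving $\partial_l\bigl[(12(a+l^2))^{-1}\wT^+(k,l)\bigr]$. Applying Minkowski's inequality in $l$, the $L^2_k$ norm is controlled by the $L^2_k$ trace of $\wT^+$ on the diagonal together with $L^1_l L^2_k$ norms of $(a+l^2)^{-1}\partial_l \wT^+$ and $l(l-k)(a+l^2)^{-2}\wT^+$. These are the $L^2_k$-valued analogues of the $L^1(\R^2)$ estimates of Proposition \ref{prop:Tpm.kl.L2}(ii) and follow from the same manipulations of \eqref{Tpm.split} combined with the Hilbert--Schmidt bound of Proposition \ref{prop:Tpm.L2}. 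This gives the desired $t^{-1}$ bound.

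For $a < -\delta < 0$, write $a = -r^2$ with $r \geq \sqrt{\delta}$ and introduce a smooth partition of unity $\psi_\infty + \psi_+ + \psi_-$ with $\psi_\pm$ supported near the nondegenerate critical points $l = \pm r$ and $\psi_\infty$ supported where $|S'(l)| \gtrsim r^2$. The $\psi_\infty$-piece is controlled by the integration-by-parts argument of the previous paragraph, contributing $O(t^{-1})$. On each $\psi_\pm$-piece I would mimic the Fourier--Airy argument of Proposition \ref{prop:KPI:u1.-}: express the localized integral as a Plancherel pairing
\[
\int \widehat{h_1}(-\xi)\, \widehat{h_{2,\pm}}(\xi)\, d\xi, \qquad h_1(l) = H(l-k)\, e^{itS(l)}, \qquad h_{2,\pm}(l) = \psi_\pm(l)\, \wT^+(k,l),
\]
apply Lemma \ref{lemma:partial.airy.ndg} to get $|\widehat{h_1}(\xi)| \lesssim_r t^{-1/2}(1 + \xi^2)^{1/2}$, and control the resulting $H^2_l$ norms of $h_{2,\pm}$ in $L^2_k$ via Proposition \ref{prop:Tpm.kl.L2}(iv). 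This yields the $t^{-1/2}$ bound.

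For $|a| < \delta$, I would localize with a smooth cutoff $\psi_0$ near the origin and $\psi_\infty = 1 - \psi_0$ away from it, where $|S'(l)| \gtrsim \delta$ so the $\psi_\infty$-piece again gives $O(t^{-1})$ by integration by parts. On $\psi_0$ I would use the uniform degenerate Airy bound $|\widehat{h_1}(\xi)| \lesssim_\delta t^{-1/3}$ of Lemma \ref{lemma:partial.airy.dg} (which is independent of $\xi$) together with Plancherel and the $H^1_l$ bounds on $\psi_0 \wT^+(k,\cdot)$ from Proposition \ref{prop:Tpm.kl.L2}(iv), giving the $t^{-1/3}$ bound. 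The main obstacle throughout is the bookkeeping of which $L^2_k$-valued norms of $\wT^\pm$ and its derivatives are subsumed by the $\mathbf{E}_{1,w}$ hypothesis, and in particular, in the transition regime, the handling of the boundary term at $l = k$ since only $H^1_l$ (not $H^2_l$) smoothness of the amplitude is available with the uniform $t^{-1/3}$ phase bound; this is ultimately resolved by the compact-support/local $L^2$ estimates in Proposition \ref{prop:Tpm.kl.L2}(iv).
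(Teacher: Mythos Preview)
Your proposal is correct and follows essentially the same approach as the paper's proof: reduce to $\calT^\pm(1)$ via the $C_\pm$ bound, integrate by parts in $l$ when $a>\delta$, and for the other two regimes localize with cutoffs near the (possibly degenerate) critical points and apply the half-Airy estimates of Lemmas~\ref{lemma:partial.airy.ndg} and~\ref{lemma:partial.airy.dg} through the Plancherel pairing with $h_1(l)=H(l-k)e^{itS(l)}$. The only cosmetic difference is that the paper carries the splitting $T^\pm=T_1+T_2^\pm$ explicitly through each estimate (bounding the diagonal boundary term via $\norm[L^\infty]{T_j^+}$ rather than an $L^2_k$ trace), whereas you package the needed bounds by reference to Propositions~\ref{prop:Tpm.L2} and~\ref{prop:Tpm.kl.L2}; the underlying computations are the same.
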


\begin{remark}
	\label{rem:I1-I2.nocp.est}
	The proof of Lemma \ref{lemma:CT.1} (see \eqref{Tpm.1.bd}) shows that the following estimates hold.
	\begin{align*}
		\norm[L^2_k]{\int e^{itS(k,l;a)} \wT^\pm(k,l) \, dl}			
			& \lesssim 
				\begin{cases}
					t^{-1}, &	a > \delta > 0,\\
					t^{-\frac13}, & |a| < \delta,\\
					t^{-\frac12}	, & a < -\delta < 0,
				\end{cases}
	\end{align*}
	which is exactly \eqref{norm.I2.k}.
\end{remark}

\begin{proof}
	By \eqref{Cauchy.bd}, it suffices to show that
	\begin{equation}
		\label{Tpm.1.bd}
			\norm[L^2_k]{\calT^\pm(1)}
			\lesssim
			\begin{cases}
				t^{-1}, 			& a > \delta > 0,\\
				t^{-\frac13},	& |a| < \delta,\\
				t^{-\frac12}, 	& a < -\delta < 0.
			\end{cases}
	\end{equation}
	We will only consider $\calT^+(1)$ since the analysis of $\calT^-(1)$ is similar.
	
	Corresponding to the decomposition \eqref{Tpm.split}, we write 
	$$\calT^+(1) = \calT^+_1(1) + \calT^+_2(1)$$
  	where
  	\begin{align*}
  		\calT^+_1(1)
  			&=	\int e^{itS_0} T^+_1(k,l) \, dl,
  		&
  		\calT^+_2(1)
  			&=	\int	 e^{itS_0} T^+_2(k,l) \, dl.
  	\end{align*}
    (for simplicity we write $T_1^+$ for $T_1$).
  	
  	\medskip
	
	(i) The case $a>\delta>0$.
	
	\medskip
	
	For $a>\delta>0$, the phase function $S_0$ has no critical points and
	\begin{equation}
		\label{T.1.P}
			P(l) = 12l^2 -2\eta l + \xi
	\end{equation}
	satisfies
	$$ P(l) \geq 12a. $$
	We have
	\begin{align*}
		\calT^+(1) 
			&= -\frac{1}{2\pi t} \int_k^\infty 
					T^+(k,l) 
						\frac{1}{P(l)} 
						\frac{\diff}{\diff l}
						\left(e^{itS_0}\right) 
				\, dl	\\
			&= -\frac{1}{2\pi t}\left(I_1 + I_2\right)
		\intertext{where, for $j=1,2$,}
		I_j	&=	\int_k^\infty 
					T_j^+(k,l)
					\frac{1}{P(l)}
					\frac{\diff}{\diff l} \left( e^{itS_0} \right) 
				\, dl.
	\end{align*}
	Integrating by parts we have, for $j=1,2$,
	\begin{align*}
		I_j	&=	-\frac{T_j^+(k,k)}{P(k)} - 
				\int_k^\infty 
					e^{itS_0}
						\left( 
							\frac{P'(l)}{P(l)^2}T^+_j(k,l) +
							\frac{1}{P(l)} \frac{\diff T^+_j}{\diff l}(k,l)
						\right) 
				\, dl 
	\end{align*}
	For $j=1$, we estimate
	\begin{align}
		\label{nocp.I1.est}
		\norm[L^2_k]{I_1}
			&\lesssim \norm[L^\infty]{T_1^+} + \norm[L^2]{T_1^+} + 
					\norm[L^2]{\frac{\diff T_1^+}{\diff l}}\\
			&\lesssim \norm[L^1]{\tu} + \norm[L^2_y L^{2,-1}_l]{\tu} +  
				\norm[L^{2,1}_y L^{2,-1}_l]{\tu} + \norm[L^{2,1}_y L^{2}_l]{\frac{\diff \tu}{\diff l}}
			\nonumber
	\end{align}
	where we used the change of variables $(k,l) \to (k,l+k)$,  and the $L^2$ bounds on integrals of the form
	$$ \int e^{il(l+2k)y} g(l) \, dl$$
	to conclude that
	$$
	\norm[L^2]{T^+_1} \lesssim \norm[L^2_y L^{2,-1}_l]{\tu},$$
	and
	$$
		\norm[L^2]{\frac{\diff T^+_1}{\diff l}}
			\lesssim \norm[L^{2,1}_y L^{2,-1}_l]{\tu} + \norm[L^{2,1}_y L^{2}_l]{\frac{\diff \tu}{\diff l}}.
	$$
	Next, for $j=2$, we estimate
	\begin{align}
		\label{nocp.I2.est}		
			\norm[L^2_k]{I_2}
			\lesssim \norm[L^\infty]{T_2^+} + \norm[L^2]{T^+_2} + 
				\norm[L^2_k]{ \int_k^\infty e^{itS_0} \frac{1}{P(l)} \frac{\diff T_2^+}{\diff l} \, dl}.
	\end{align}
	To estimate the first two right-hand terms in \eqref{nocp.I2.est}, we have
	\begin{align}
		\norm[L^\infty]{T_2^+} 
		\label{nocp.I2.1}
			&\lesssim \norm[L^{2,1}_y L^{2}_l]{\tu} \norm[L^\infty_{k,y} L^2_l]{\tmu^+_\sharp},
		\intertext{and}
		\label{nocp.I2.2}
		\norm[L^2]{T_2^+}
			&\lesssim \norm[L^1]{\tu} \norm[L^\infty_y L^2_k L^2_l]{\tmu^+_\sharp}.
	\end{align} 
	To estimate the third right-hand term in \eqref{nocp.I2.est}, we first note that
	\begin{align*}
		\frac{1}{P(l)}&\frac{\diff T_2^+}{\diff l}(k,l)
			\\ 
			&=-\frac{i}{2\pi}
					\int
						e^{i(l^2-k^2)y}
								\frac{2ily}{P(l)} (\tu*\tmu^+_\sharp)(k,l-k;y) 
					\, dy\\
			&\quad -
				\frac{i}{2\pi}
					\int
						e^{i(l^2-k^2)y}
							\frac{1}{P(l)}
							\left(	
								\frac{\diff \tu}{\diff l}*\tmu^+_\sharp
							\right)
							(k,l-k;y)
					\, dy						
	\end{align*}
	which implies that
	\begin{align}
		\label{nocp.I2.3}
		\norm[L^1_l L^2_k]{\frac{1}{P(l)}\frac{\diff T^+_2}{\diff l}}
			&\lesssim \norm[L^{1,1}_y L^1_l]{\tu} \norm[L^\infty_y L^2_k L^2_l]{\tmu^+_\sharp}\\
			&\quad
				+ \norm[L^1_y L^1_l]{\frac{\diff \tu}{\diff l}} \norm[L^\infty_y L^2_k L^2_l]{\tmu^+_\sharp}.
		\nonumber
	\end{align}
	Estimates \eqref{nocp.I2.1}--\eqref{nocp.I2.3} bound $I_2$ and, together with \eqref{nocp.I1.est},
	imply the estimate \eqref{Tpm.1.bd}, and hence \eqref{C.T.1.est}, in the case $a>\delta>0$.
	\medskip
	
	(ii) The case $a<-\delta<0$.
	
	\medskip
	
	Now we study $\calT^+(1)$ in the case $a<=-r^2 <0$ where the phase function \eqref{S0.def} has nondegenerate critical points at $l_\pm = \frac{\eta}{12} \pm r$. Let $\psi_\pm \in C_0^\infty(\R)$ be nonnegative bump functions with $\psi_\pm(l)=0$ for $|l - l_\pm| \geq r/4$ and $\psi(l)=1$ for $|l-l_\pm| \leq r/8$. Finally, let $\psi_\infty=1-\psi_+- \psi_-$. We now have
	\begin{align*}
		(\calT^+(1))(k)
			&=	I_1+I_2,	
		\intertext{where}
		I_1	&=	 -\frac{1}{2\pi} \int_k^\infty e^{itS_0(k,l)} T^+(k,l) \psi_\infty(l) \, dl
		\intertext{and}
		I_2	&=	-\frac{i}{2\pi} \int_k^\infty e^{itS_0(k,l)} T^+(k,l)(\psi_+(l) + \psi_-(l)) \, dl.
	\end{align*}
	Using integration by parts in the same way as in (i), we can bound
	\begin{equation}
		\label{cp.I1.bd}
			\norm[L^2_k]{I_1} \lesssim t^{-1}
	\end{equation}
	with constants depending on $\tu$, uniform in $(t,x,y)$ but diverging as $a \uparrow 0$. 
	To bound $I_2$ we will use \eqref{Tpm.split} to write
	$$ I_2 = J_1 + J_2$$
	where
	\begin{align*}
		J_1	&=	-\frac{i}{2\pi} 
					\int_k^\infty 
						e^{itS_0(k,l)} T_1^+(k,l)(\psi_+(l) + \psi_-(l)) \, dl ,
	\intertext{and}
		J_2	&=	-\frac{i}{2\pi} 
					\int_k^\infty 
						e^{itS_0(k,l)} T_2^+(k,l)(\psi_+(l) + \psi_-(l)) \, dl .
	\end{align*} 
	We will show how to estimate the terms involving $\psi_+$ since the estimates for $\psi_-$ are similar. First, we make a change of variables
	\begin{equation}
	\label{kl.shift.post}
		(k,l) \to (k+\eta/12,l+\eta/12)	,
	\end{equation}
  so that
	\begin{align}
		\label{J1+.def}
		J_1^+	&=	-\frac{i}{2\pi}\int_k^\infty e^{itS(k,l)}\wT^+_1(k,l) \psi_+(l) \, dl,
		\intertext{and}
		\label{J2+.def}
		J_2^+	&=	-\frac{i}{2\pi}\int_k^\infty e^{itS(k,l)}\wT^+_2(k,l) \psi_+(l) \, dl
	\end{align}
	where the superscript $+$ indicates the term of $J_i$ involving $\psi_+$, and $\psi_+(l)$ is shorthand for $\psi_+$ in the new variables, 
	\begin{equation}
		\label{wT+.def}
			\wT^+(k,l) = T^+(k + \eta/12,l+\eta/12),
	\end{equation}
	and
	\begin{equation}
		\label{S.def.r}
			S(k,l) = -12r^2(l-k) +4(l^3-k^3). 
	\end{equation}
	Note that $S$ has critical points at $l=\pm r$ and $\psi_+$ (in the new variable) has support near $l=r$. 
	
	To analyze the asymptotics of \eqref{J1+.def} and \eqref{J2+.def}, we will use the Fourier multiplication formula
	\begin{equation}
		\label{Fourier.mult}
			\int_\R f(l) g(l) \, dx = \int_\R \widehat{f}(-\xi) \widehat{g}(\xi) \, d\xi
	\end{equation}
	where we take
	$$ f(l) = H(l-k) e^{itS(k,l)}, $$
	and
	$$ g(l) = \wT^+(k,l) \psi_+(l). $$
	Here we view $\wT^+(k,l)$ as a smooth function on $\R^2$ defined by 
	\eqref{Tpm.split}, \eqref{T1.def}--\eqref{T2.def}, and \eqref{wT+.def}. 
	 We have the `Half-Airy' estimate (see Lemma \ref{lemma:partial.airy.ndg})
	\begin{equation}
		\label{f.ft.est}
			|\widehat{f}(\xi)| \lesssim t^{-\frac12}(1+|\xi|). 
	\end{equation}
	Let 
	$$ 
		g_j(l) = \wT^+_j(k,l) \psi_+(l).
	$$
	Then
	\begin{align*}
		\widehat{g}_j(\xi)
			&=	(2\pi)^{-\frac12}
					\int e^{-i\xi l} \psi_+(l) \wT^+_j(k,l) \, dl.
	\end{align*}
	It follows from the Plancherel Theorem that
	\begin{align}
		\label{g.Sobolev}
			\norm[L^2]{(1+\xi^2)\widehat{g}_j}
				&\lesssim 
					\norm[L^2_l]{\psi_+(\dotarg) \wT^+(k,\dotarg)} 	+
					\norm[L^2_l]{\frac{\diff^2}{\diff l^2}\left(\psi_+(\dotarg)\wT^+_j\right)},
	\end{align}
	so that, using \eqref{Fourier.mult}, \eqref{f.ft.est}, and \eqref{g.Sobolev}, we conclude that
	\begin{align}
		\label{J1.est.pre}
			\norm[L^2_k]{J_1}
				&\lesssim t^{-\frac12} 
						\left(
							\norm[L^2_k L^2_l]{\psi_+(\cdot)\wT_1^+(\cdot,\cdot) }	 +
							\norm[L^2_k L^2_l]{\frac{\diff^2}{\diff l^2}
													\left(
														\psi_+(\cdot) \wT_1^+(\cdot,\cdot)
													\right)}
						\right),
		\intertext{and}
		\label{J2.est.pre}
			\norm[L^2_k]{J_2}
				&\lesssim t^{-\frac12} 
						\left(
							\norm[L^2_k L^2_l]{\psi_+(\cdot)\wT_2^+(\cdot,\cdot) }	 +
							\norm[L^2_k L^2_l]{\frac{\diff^2}{\diff l^2}
													\left(
														\psi_+(\cdot) \wT_2^+(\cdot,\cdot)
													\right)}
						\right).
	\end{align}
	
	The proof of \eqref{Tpm.1.bd}, and hence \eqref{C.T.1.est}, in the case $a < c < 0$, will be finished provided we can show that the right-hand norms in \eqref{J1.est.pre} and \eqref{J2.est.pre} are finite.

	Consider first \eqref{J1.est.pre}. From \eqref{Iv.bd}, we have
	\begin{equation}
		\label{J1.term1.est}	
			\norm[L^2_k L^2_l]{\wT^+_1} \lesssim \norm[L^2_y L^{2,-1}_l]{\tu}
	\end{equation}
	which shows that the first right-hand term of \eqref{J1.est.pre} is bounded. To show that the second right-hand term of \eqref{J1.est.pre} is bounded, 
	we compute
	\begin{align*}
		\frac{\diff^2}{\diff l^2}\left( \psi_+(l) \wT^+_1(k,l) \right)
			&=\psi_+''(l) \wT^+_1(k,l) + 
				2\psi_+'(l) \frac{\diff \wT^+_1}{\diff l}(k,l) +
				\psi_+(l) \frac{\diff^2 \wT^+_1}{\diff l}(k,l) ,
	\end{align*} 
	so that
	\begin{align}
		\label{J1.term2.est}
			\norm[L^2_k L^2_l]
				{
					\frac{\diff^2}{\diff l^2}
					\left(\psi_+ \wT^+_1 \right)
				}
		&\lesssim 
			\norm[L^2_y L^{2,-1}_l]{\tu} +
			\norm[L^2_y L^{2,-1}_l]{y\frac{\diff \tu}{\diff l}} +
			\norm[L^2_y L^{2,-1}_l]{ \frac{\diff^2 \tu}{\diff l^2} }
	\end{align}
	where we used
	\begin{align*}
		\frac{\diff T^+_1}{\diff l}
			&= -\frac{i}{2\pi} \int e^{i(l^2-k^2)y}
					\left(2ikl \tu(y,l) + \frac{\diff \tu}{\diff l}(y,l) \right) \, dy,
	\intertext{and}
		\frac{\diff^2 T^+_1}{\diff l^2}
			&= -\frac{i}{2\pi} \int e^{i(l^2-k^2)y}
					\left( -4l^2k^2 \tu(y,l) + 2ily \frac{\diff \tu}{\diff l}(y,l) + \frac{\diff^2 \tu}{\diff l^2}(y,l) \right) \, dl.
	\end{align*}
	Combining \eqref{J1.term1.est} and \eqref{J1.term2.est}, we recover 
	\begin{equation}
		\label{J1.est}
		\norm[L^2_k]{J_1} \lesssim t^{-\frac12}. 
	\end{equation}

	Now we consider \eqref{J2.est.pre}. First, we may bound
	\begin{align}
		\label{J2.term1.est}
			\norm[L^2_k L^2_l]{\wT^+_2}
				&\lesssim \norm[L^1]{\tu} \norm[L^\infty_y(L^2_kL^2_l)]{\tmu^+_\sharp}.
	\end{align}
	Next, we compute
	\begin{align}
		\label{J2.term2.form}
		\frac{\diff^2}{\diff l^2} \left( \psi_+\wT^+_2 \right) (k,l)
			&=	\psi_+''(l) \wT^+_2(k,l) +	
				2 \psi_+'(l) \frac{\diff \wT^+_2}{\diff l}(k,l) +\psi_+(l) \frac{\diff^2 \wT^+}{\diff l^2}(k,l).
	\end{align}
	We bound each of the three right-hand terms of \eqref{J2.term2.form} in $L^2_k L^2_l$ norm.
	
	First,
	\begin{align*}
		 \wT^+_2(k,l)
			&=	- \frac{i}{2\pi} \int e^{i(l^2-k^2)y} (\tu*\mu^+_\sharp)(k,l-k;y) \, dy,
	\end{align*} 
	so 
	\begin{align}
		\label{J2.term2.1}
		\norm[L^2_k L^2_l]{\wT^+_2}
			&\lesssim \norm[L^1]{\tu} \norm[L^\infty_y(L^2_k L^2_l)]{\mu^+_\sharp},
	\end{align}
	bounding the first right-hand term of \eqref{J2.term2.form}.
	
	Next,
	\begin{align*}
		\psi_+'(l) \frac{\diff \wT^+_2(k,l)}{\diff l}(k,l)
			&=	-\frac{i}{2\pi} \int e^{i(l^2-k^2)y} \, 2ily \, \psi_+'(l) (\tu*\tmu^+_\sharp)(k,l-k;y) \, dy\\
			&\quad +	
				-\frac{i}{2\pi} \int e^{i(l^2-k^2)y} \psi_+'(l) \left( \frac{\diff \tu}{\diff l}*\mu^+_\sharp \right)(k,l-k;y) \, dy
	\end{align*}
	so
	\begin{align}
		\label{J2.term2.2}
		\norm[L^2_k L^2_l]{\psi_+' 	\frac{\diff \wT^+_2}{\diff l}}
			&\lesssim 
				\left(
					\norm[L^1]{y\tu} +
					\norm[L^1]{\frac{\diff \tu}{\diff l} } 
				\right)
				\norm[L^\infty_y(L^2_k L^2_l)]{\tmu^+_\sharp}
	\end{align}
	where the implied constant depends on $r$ through $\sup_l |l\psi_+'(l)|$. This bounds the second right-hand term of \eqref{J2.term2.form}.
	
	Finally,
	\begin{align*}
		\psi_+''(l) \frac{\diff^2 \wT^+_2}{\diff l^2}(k,l)
			&= -\frac{i}{2\pi} 
					\int e^{i(l^2-k^2)y}
						\psi_+''(l) 
							(-4l^2 y^2) (\tu*\tmu^+_\sharp)(k,l-k;y)
					\, dy \\
			&\quad -\frac{i}{2\pi}	
					\int e^{i(l^2-k^2)y}
						\psi_+''(l)
							(2ily) \left( \frac{\diff \tu}{\diff l}*\tmu^+_\sharp \right)(k,l-k;y)
					\, dy \\
			&\quad -\frac{i}{2\pi}
					\int e^{i(l^2-k^2)y}
						\psi_+''(l)
							\left( 
								\frac{\diff^2 \tu}{\diff l^2} * \tmu^+_\sharp
							\right)(k,l-k;y)
					\, dy,
	\end{align*}
	so that
	\begin{align}
		\label{J2.term2.3}
			\norm[L^2_kL^2_l]{\psi_+''(l) \frac{\diff^2 \wT^+_2}{\diff l^2} }
			&\lesssim
				\left(	
					\norm[L^1]{y^2\tu}  +
					\norm[L^1]{y \frac{\diff \tu}{\diff l}}  +
					\norm[L^1]{\frac{\diff^2 \tu}{\diff l^2}}
				\right)
				\norm[L^\infty_y(L^2_k L^2_l)]{\tmu^+_\sharp}.
	\end{align}
	This bounds the third right-hand term of \eqref{J2.term2.form}.
	
	Combining \eqref{J2.term2.1}--\eqref{J2.term2.3}, we recover
	\begin{equation}
		\label{J2.est}
			\norm[L^2_k]{J_2} \lesssim t^{-\frac12}.	
	\end{equation}
	
	Finally, combining \eqref{J1.est} and \eqref{J2.est}, we recover the bound \eqref{Tpm.1.bd} (and hence, \eqref{C.T.1.est}) in the case $a<\delta<0$.
	
	\medskip
	
	(iii) The case $a=o(1)$. We first recall that  
\begin{align}
    \frac{1}{\sqrt{2\pi}} \int_k^{+\infty} e^{-i\xi l}e^{it(12al+4l^3)} dl = \frac{\sqrt{2\pi}}{(12t)^{\frac13}} \Ai\left((12t)^\frac23(a -\xi/12t )\right).
\end{align}
In case of $a=o(1)$ as $t\to +\infty$, we only have that 
\begin{equation}
     \frac{1}{\sqrt{2\pi}} \int e^{-i\xi l}e^{it(12al+4l^3)} dl=\mathcal{O}(t^{-1/3}).
\end{equation}
We let $\psi_0$ be a smooth cutoff function such that 
\begin{align}
\label{cutoff-l-0}
    \psi_0(l)=\begin{cases} 1, & |l|\leq 1,\\
                          0, & |l| \geq 2.
    \end{cases}
\end{align}
and split
$$\calT^+(1) = I_1 + I_2$$
where
\begin{align*}
	I_1	&=	\int_k^\infty e^{itS_0} \psi_\infty(l) T^+(k,l) \, dl,\\
	I_2 &=	\int_k^\infty e^{itS_0} \psi_0(l) T^+(k,l) \, dl 
\end{align*}
where $\psi_\infty = 1- \psi_0$. The rest of the proof will follow from the the proof of the case $a<\delta<0$ and the result in Appendix \ref{App:Airy}.

\medskip

This completes the proof.
\end{proof}

\begin{lemma}
	\label{lemma:CdTdX.1}
	Suppose that $u \in \mathbf{E}_{1,w}$ and the conditions \eqref{u.C1}, \eqref{u.C2}, and \eqref{u.C2a} hold. Fix $\delta>0$. Then, the estimates
	\begin{equation}
		\norm[L^2_k]{\calC_{\diff T/\diff x}(1)}
			\lesssim
			\begin{cases}
				t^{-1}, 			& a > \delta > 0,\\
				t^{-\frac13},	&	|a| < \delta,\\
				t^{-\frac12}, 	& a < -\delta < 0
			\end{cases}
	\end{equation}
	hold.
\end{lemma}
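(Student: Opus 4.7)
The plan is to mimic the proof of Lemma \ref{lemma:CT.1} essentially verbatim, accounting for the single structural difference: the amplitude now carries an extra factor $i(l-k)$. By the operator norm bound \eqref{Cauchy.bd} and the definition \eqref{C.dTdX.op}, it suffices to bound
$$
\norm[L^2_k]{\left(\frac{\diff \calT^\pm}{\diff x}\right)(1)} = \norm[L^2_k]{\pm \int_k^{\pm\infty} e^{itS_0(k,l;\xi,\eta)}\, i(l-k) T^\pm(k,l) \, dl}
$$
in each of the three regimes for $a$. As in the preceding lemma I treat only the $+$ variant, and will decompose $T^+ = T_1 + T_2^+$ according to \eqref{Tpm.split} whenever convenient.

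For $a > \delta > 0$, the phase $S_0$ has no critical points and $P(l)=S_0'(l) \geq 12\delta > 0$. An integration by parts in $l$ produces a boundary term that vanishes at $l=k$ (thanks to the $(l-k)$ factor) and decays at $l=+\infty$, together with interior terms proportional to
$$
\frac{T^+(k,l)}{P(l)}, \qquad \frac{(l-k)}{P(l)}\frac{\diff T^+}{\diff l}(k,l), \qquad \frac{l(l-k)\, T^+(k,l)}{P(l)^2}.
$$
These are precisely the three combinations shown to lie in $L^1(\R^2)$ in Proposition \ref{prop:Tpm.kl.L2}(ii), equations \eqref{Tpm.est01}--\eqref{Tpm.est03}. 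Turning the $L^1(\R^2)$ estimates into $L^2_k$ estimates is done exactly as in the proof of Lemma \ref{lemma:CT.1}(i): split into $T_1$ and $T_2^+$ contributions, write each as an oscillatory integral in $y$, and apply \eqref{Iv.bd} and Young's convolution inequality using the $X$-bounds on $\tmu^\pm_\sharp$ from Lemma \ref{lemma:tmu.smooth}. The overall factor of $1/t$ from the integration by parts gives the claimed $t^{-1}$ decay.

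For $a < -\delta < 0$, the phase has nondegenerate critical points at $l_\pm = \eta/12 \pm r$, and I use the same cutoff decomposition $1 = \psi_\infty + \psi_+ + \psi_-$ as in Lemma \ref{lemma:CT.1}(ii). The $\psi_\infty$ contribution is handled by integration by parts and yields $O(t^{-1})$, again with the boundary term vanishing by the $(l-k)$ factor. For the $\psi_\pm$ pieces, I shift via \eqref{kl.shift.post} and apply the Fourier multiplication identity \eqref{Fourier.mult} with $f(l) = H(l-k) e^{itS(k,l)}$ and $g(l) = \psi_\pm(l)(l-k)\wT^+(k,l)$; the ``half-Airy'' estimate \eqref{f.ft.est} gives $|\widehat f(\xi)| \lesssim t^{-1/2}(1+|\xi|)$, and by Plancherel the $L^2_k$-norm is then bounded in terms of
$$
\norm[L^2_kL^2_l]{(l-k)\psi_\pm \wT^+} \quad \text{and} \quad \norm[L^2_kL^2_l]{\frac{\diff^2}{\diff l^2}\bigl((l-k)\psi_\pm \wT^+\bigr)}.
$$
These are exactly quantities (e) and (f) in Proposition \ref{prop:Tpm.kl.L2}(iv), equation \eqref{Tpm.est05}, whose $L^2$-finiteness is established there. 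This gives the $t^{-1/2}$ bound. The transition regime $|a|<\delta$ uses the cutoff $\psi_0$ of \eqref{cutoff-l-0}: away from $l\approx 0$ the phase has $P(l)$ bounded below and integration by parts gives $O(t^{-1})$; near $l=0$ the degenerate-critical-point Airy bound from Appendix \ref{App:Airy}, which is uniform in $\xi$ with rate $t^{-1/3}$, replaces \eqref{f.ft.est}, so only the $L^2$ norms of $(l-k)\psi_0 \wT^+$ and its first $l$-derivative are needed, again controlled by \eqref{Tpm.est05}.

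The only genuinely new point relative to Lemma \ref{lemma:CT.1} is the necessity of the $(l-k)$-weighted bounds on $T^\pm$ and its derivatives; but this is exactly why the weighted estimates in Proposition \ref{prop:Tpm.kl.L2}(ii) and (iv) were formulated. The main potential obstacle is bookkeeping in the stationary-phase regime, where one must confirm that the Hilbert--Schmidt-type $L^2_kL^2_l$ norms of $(l-k)\psi_\pm \wT^+_{1,2}$ and its second $l$-derivative reduce, via the same convolution-plus-oscillatory-integral estimates as in the previous lemma, to the norms of $\tu$ and $\tmu^\pm_\sharp$ appearing in the definition of $\mathbf{E}_{1,w}$; this verification is routine given the identities and bounds already set up in the proof of Lemma \ref{lemma:CT.1} and requires no new techniques.
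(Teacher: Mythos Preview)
Your overall strategy is correct and your treatment of cases (i) and (iii) matches the paper's. There are, however, two points worth noting about case (ii).

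First, a bookkeeping error: items (e) and (f) of \eqref{Tpm.est05} are \emph{not} the quantities you need. Those items carry a $k$-derivative, namely $(l-k)\dfrac{\diff}{\diff k}(\psi T^\pm)$ and its second $l$-derivative, whereas your half-Airy argument requires $L^2_kL^2_l$ bounds on $(l-k)\psi_\pm(l)\wT^+$ and $\dfrac{\diff^2}{\diff l^2}\bigl((l-k)\psi_\pm(l)\wT^+\bigr)$, with $\psi_\pm$ a cutoff in $l$ only. The first of these is easy (Proposition~\ref{prop:Tpm.L2} gives $(l-k)T^\pm\in L^2(\R^2)$), but the second, after expanding, contains the term $(l-k)\psi_\pm(l)\diff_l^2\wT^+$, and this is not among the estimates established in Proposition~\ref{prop:Tpm.kl.L2} (note that (iv) there has $\psi\in C_0^\infty(\R^2)$, compact in \emph{both} variables). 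It is likely provable by the same explicit computations used in \eqref{J1.term2.est}--\eqref{J2.term2.3}, but it is extra work, not a direct citation.

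Second, and more interestingly, the paper does \emph{not} follow Lemma~\ref{lemma:CT.1} here. Instead it exploits the extra $(l-k)$ factor by moving the Heaviside into $g$: it takes $f(l)=e^{itS}$ (so $\widehat f$ is the \emph{full} Airy function, bounded and decaying) and $g(l)=(l-k)H(l-k)\psi_+(l)T^+(k,l)$. Because $(l-k)H(l-k)$ is Lipschitz, differentiating $g$ once in $l$ produces no $\delta(l-k)$ term, and one only needs $g\in H^1_l$, i.e.\ only \emph{first} $l$-derivatives of $T^+$. The argument then splits $\int\widehat f(-\xi)\widehat g(\xi)\,d\xi$ according to the sign of $a-\xi/12t$ and uses the Airy asymptotics directly, arriving at the bound \eqref{CT1.x.pm.bd} in terms of $\norm[L^2_kL^2_l]{(l-k)T^+}$ and $\norm[L^2_kL^2_l]{(l-k)\diff_l T^+}$, both of which are already established (the latter is \eqref{Tpm.est03.bound1}). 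This is strictly more economical than your route: one derivative instead of two, global $L^2$ estimates instead of local ones, and no new computations beyond what Proposition~\ref{prop:Tpm.kl.L2} already provides.
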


\begin{remark}
	\label{rem:I2.bd}
	The proof of Lemma \ref{lemma:CdTdX.1} (see \eqref{dTdX.1.est}) shows that the estimates
	\begin{align*}
		\norm[L^2_k]{\int e^{itS(k,l;a)} i(l-k) \wT^\pm(k,l) \, dl}
			&\lesssim
				\begin{cases}
					t^{-1}, & 0 < \delta < a,\\
					t^{-\frac13}, & |a| < \delta,\\
					t^{-\frac12}, & a < -\delta < 0
				\end{cases}
	\end{align*}
	hold, which is exactly \eqref{norm.I1.k}.
\end{remark}

\begin{proof}
As before, it suffices to prove that
	\begin{equation}
	\label{dTdX.1.est}
		\norm[L^2_k]{\frac{\diff}{\diff x} \calT^\pm (1)}
			\lesssim
			\begin{cases}
				t^{-1}, & a> \delta > 0,\\
				t^{-\frac13}, & |a|<\delta,\\
				t^{-\frac12}, & a < -\delta < 0.	
			\end{cases}
	\end{equation}
	We will focus on the estimate for $\dfrac{\diff}{\diff x} \calT^+(1)$ since the other estimate is similar. 
		
	\medskip
	(i) The case $a > \delta > 0$.
	\medskip
	As before, recalling \eqref{T.1.P} and the associated integration by parts argument, we have
	\begin{align*}
		\frac{\diff}{\diff x} \calT^+(1) 
			&=	\frac{1}{it} 
					\int_k^\infty 
						i(l-k) T^+(k,l) 
							\frac{1}{P(l)} 
						\frac{\diff}{\diff l} 
							\left( e^{itS_0} \right) 
					\, dl\\
			&= -\frac{1}{it}
					\int_k^\infty e^{itS_0}
						\frac{\diff}{\diff l}
							\left(
								\frac{i(l-k) T^+(k,l)}{P(l)} 
							\right)
					\, dl	\\
			&=	-\frac{1}{it}
					\int_k^\infty 
						e^{itS_0} (l-k) T^+(k,l) \frac{P'(l)}{P(l)^2	} 
					\, dl\\
			&\quad		
				-\frac{1}{it}
					\int_k^\infty
						e^{itS_0} \frac{1}{P(l)} 
							\left( 
								T^+(k,l) + (l-k) \frac{\diff T^+}{\diff l}(k,l)
							\right)
					\, dl.
	\end{align*}
	As $P(l)^{-1}$ and $P'(l)/P(l)^2$ are $L^2$ we may use the Schwartz inequality and Minkowski's integral inequality to estimate
	\begin{multline}
		\label{dTdX.1.nocp}	
			\norm[L^2_k]{\frac{\diff}{\diff x}\calT^+(1)}
			\lesssim t^{-1} 
					\left(
						\norm[L^2_k L^2_l]{(l-k)T^+} +
						\norm[L^2_k L^2_l]{T^+} +
						\norm[L^2_k L^2_l]{(l-k)\frac{\diff T^+}{\diff l}}
					\right)
	\end{multline}
	which proves \eqref{dTdX.1.est} in the case $a>\delta>0$.

	\medskip
	(ii) The case $a < -\delta < 0$.
	\medskip
	
	As in the proof of Lemma \ref{lemma:CT.1}, we introduce cutoff functions $\psi_\pm$ with disjoint supports that localize near critical points of $S_0$. We also set 
	$$ \psi_\infty= 1 - \psi_+ - \psi_- $$
	so that
	\begin{align}
		\label{CT1.x.cp.form}
			\frac{\diff}{\diff x} \calT^+(1)
			&= I_1 + I_2
		\intertext{where}
		\label{CT1.x.I1}
		I_1	
			&=	\int_k^\infty 
					e^{itS_0} i(l-k) 
						\psi_\infty(l) T^+(k,l) \, dl,
		\intertext{and}
		\label{CT1.x.I2}
		I_2		
			&= \int_k^\infty 
					e^{itS_0} i(l-k) 
						\left(\psi_+(l) + \psi_-(l) \right) 
					T^+(k,l) 
				\, dl.
	\end{align}
	Since $\psi_\infty$ is supported away from critical points of $S_0$, we can integrate by parts and obtain
	\begin{align*}
		I_1
			&=	- \frac{1}{t} 
					\int_k^\infty 
						e^{itS_0}
						\frac{1}{P(l)}\left(\psi_\infty(l)T^+(k,l) + (l-k)\frac{\diff T^+}{\diff l}(k,l) \right)
					\, dl \\
			&\quad 
				+  \frac{1}{t}
					\int_k^\infty 
						e^{itS_0}
						\frac{P'(l)}{P(l)^2}
						(l-k)\psi_\infty(l) T^+(k,l)
					\, dl.
	\end{align*}
	Proceeding as in the proof of Lemma \ref{lemma:CT.1} we obtain
	\begin{equation}
		\label{CT1.x.infty.bd}
			\norm[L^2_k]{I_1} 
				\lesssim t^{-1}
					\left(
						\norm[L^2]{T^+} +
						\norm[L^2]{(l-k)T^+} +
						\norm[L^2]{(l-k)\frac{\diff T^+}{\diff l}}
					\right)
	\end{equation}
	with implied constants diverging as $r \downarrow 0$. 
	
	
	Next, we write
	$$ I_2 = I_2^+ + I_2^- $$
	where
	$$ I_2^\pm = \int_k^\infty e^{itS_0} i(l-k) \psi_\pm(l) T^+(k,l) \, dl. $$
	We will estimate $\norm[L^2_k]{I_2^+}$ since the analysis of $\norm[L^2_k]{I_2^-}$ is similar.
	
	First, changing variables from $(k,l)$ to $(k+\eta/12,l+\eta/12)$, we have
	$$ I_2^+ = \int_k^\infty e^{itS} i(l-k) \psi_+(l)\wT^+(k,l) \, dl $$
	where we write $\psi_+(l)$ as shorthand for $\psi_+(l+\eta/12)$ and $S(k,l)$ is given by \eqref{S.def.r}.
	We will use the Fourier multiplication formula \eqref{Fourier.mult} with
	\begin{align*}
		f(l) &= e^{itS}, \\
		g(l) &= (l-k) H(l-k) \psi_+(l) T^+(k,l),
	\end{align*} 
	noting that
	$$ 
		(2\pi)^{-\frac12} 
			\int e^{-i\xi l} e^{itS} \, dl = 
				\frac{e^{-it(12ak + 4k^3)}}{(12t)^\frac13 } (2\pi)^\frac12
					\Ai
						\left(
							(12t)^\frac23 
								\left(a- \frac{\xi}{12t} \right) 
						\right).
	$$
	From the asymptotics (see, for example, \cite[\S 9.7, eq.\ 9.7.9]{NIST:DLMF}
	$$
	\Ai(-x) \sim \frac{1}{\sqrt{\pi} x^{1/4}}\cos\left(\frac23 x^\frac32 - \frac{\pi}{4}  \right) 
		+ \bigO{\frac{1}{x^\frac74} }, \quad x \to \infty
	$$
	we have, for $\xi > -12r^2 t+(12t)^{-\frac13}$, that
	\begin{equation}
		\label{I2+.Airy.asy}
			|\widehat{f}(\xi)| 
		\sim  t^{-\frac12}
				\left(\cos\left(  12t(r^2+\xi/12t)^\frac32 - \frac{\pi}{4}\right)+ \bigO{t^{-\frac76} }\right). 
	\end{equation}
	On the other hand, 
	\begin{align*}
		\widehat{g}(\xi)
			&=	\int_k^\infty  e^{-i\xi l}(l-k) \psi_+(l) T^+(k,l) \, dl
		\intertext{and}
		\xi \widehat{g}&(\xi)\\
			&= i \int_k^\infty \frac{\diff}{\diff l}(e^{-i\xi l}) (l-k) \psi_+(l) T^+(k,l) \, dl\\
			&=-i\int e^{-i\xi l} \left( (\psi_+(l)  + (l-k)\psi_+'(l)) T^+(k,l) + (l-k)\psi_+(l) \frac{\diff T^+}{\diff l}(k,l) \right) \, dl 
	\end{align*}
	so that, by Schwartz inequality and the Plancherel theorem,
	\begin{align}
		\label{I2+.Airy.est1}
		\norm[L^2_\xi]{\widehat{g}}
			&\lesssim \norm[L^2_l]{(\dotarg-k)T^+(k,\dotarg)},
		\intertext{and}
		\label{I2+.Airy.est2}
		\norm[L^2_\xi]{\xi \widehat{g}(\xi)}
			&\lesssim \norm[L^2_l]{T^+(k,\dotarg)} + \norm[L^2_l]{(\dotarg-k)T^+(k,\dotarg)} \\
			&\quad +
				\norm[L^2_l]{(\dotarg-k) \frac{\diff T^+}{\diff l}(k,\dotarg)}.
			\nonumber
	\end{align}
	We now estimate
	\begin{align*}
		\left| \int \widehat{f}(-\xi) \widehat{g}(\xi) \, d\xi \, \right|
			&\lesssim  J_1 + J_2
		\intertext{where}
			J_1 &= \int_{\xi < -6r^2 t}
						(1+\xi^2)^{-\frac12}
						\left[ 
							(1+\xi^2)^\frac12 |\widehat{g}(\xi)|
						\right] 
					\, d\xi ,
		\intertext{and}
			J_2	&= t^{-\frac12} 
					\int_{\xi > -6r^2 t} 
						(1+\xi^2)^{-\frac12} 
						\left[ 
							(1+\xi^2)^\frac12|\widehat{g}(\xi)|
						\right] 
					\, d\xi.
	\end{align*}
	Using \eqref{I2+.Airy.est1} and \eqref{I2+.Airy.est2} together with the Schwartz inequality,  we have
	\begin{align*}
		|J_1|	&\lesssim t^{-1} 
					\left( 
							\norm[L^2_l]{(\dotarg-k)T^+(\dotarg,k)}+ 
							\norm[L^2_l]{(\dotarg-k) \frac{\diff T^+}{\diff l}(k,\dotarg)}
					\right)
	\intertext{and}
		|J_2|	&\lesssim t^{-\frac12}
					\left( 
							\norm[L^2_l]{(\dotarg-k)T^+(\dotarg,k)}+ 
							\norm[L^2_l]{(\dotarg-k) \frac{\diff T^+}{\diff l}(k,\dotarg)}
					\right).
	\end{align*}
	We can now conclude that
	\begin{align}
		\label{CT1.x.pm.bd}
		\norm[L^2_k]{I_2}
			&\lesssim t^{-\frac12}	\
					\left( 
							\norm[L^2_kL^2_l]{(l-k)T^+}+ 
							\norm[L^2_kL^2_l]{(l-k) \frac{\diff T^+}{\diff l}}
					\right).
	\end{align}
 	Combining \eqref{CT1.x.infty.bd} and \eqref{CT1.x.pm.bd}, we conclude that \eqref{dTdX.1.est} holds for $a < -\delta < 0$.
 	
 	\medskip
 	(iii) The case $a=o(1)$. 
 	\medskip
 	
 	As before we use the facts that
\begin{align}
	\label{Airy.id.again.bis}
    \frac{1}{\sqrt{2\pi}} \int e^{-i\xi l}e^{it(12al+4l^3)} dl = \frac{\sqrt{2\pi}}{(12t)^{\frac13}} \Ai\left((12t)^\frac23(a -\xi/12t )\right),
\end{align}
so that 
\begin{equation*}
     \frac{1}{\sqrt{2\pi}} \int e^{-i\xi l}e^{it(12al+4l^3)} dl=\mathcal{O}(t^{-1/3}).
\end{equation*}
We let $\chi_0$ be a smooth cutoff function such that 
\begin{align*}
    \psi_0(l)=
    	\begin{cases} 
    		1, & |l|\leq 1,\\
 			0, & |l| \geq 2,
    \end{cases}
\end{align*}
and split
$$\frac{\diff}{\diff x} \calT^+(1) = I_1 + I_2$$
where
\begin{align*}
	I_1	&=	\int_k^\infty e^{itS_0} (l-k) \psi_\infty(l) T^+(k,l) \, dl,\\
	I_2 &=	\int_k^\infty e^{itS_0} (l-k) \psi_0(l) T^+(k,l) \, dl ,
\end{align*}
where now $\psi_\infty = 1- \psi_0$. The rest of the proof will follow from the the proof of the case $a<\delta<0$ and the fact that 
\begin{align}
    \frac{1}{\sqrt{2\pi}} \int e^{-i\xi l}e^{it(12al+4l^3)} dl = \frac{\sqrt{2\pi}}{(12t)^{\frac13}} \Ai\left((12t)^\frac23(a -\xi/12t )\right).
\end{align}

\medskip
 	
This completes the proof.
\end{proof}

\appendix

\section{Airy-Type Integrals}
\label{App:Airy}
In this section, we prove the estimates \eqref{KPI.g1.est} and \eqref{KPI.loc.u1+.h1.ft.dcp}. Denote
$$H(t;a) = \int_k^\infty e^{-i\xi l}e^{itS(l)} \, dl.$$

\begin{lemma}
	\label{lemma:partial.airy.ndg}
		Suppose that $a \leq -r^2$ for a fixed $r>0$. Then
		\begin{equation}
			\label{partial.airy.ndg.est}
				|H(t;a)| \lesssim_{r} t^{-\frac12}(1+|\xi|).
		\end{equation}
\end{lemma}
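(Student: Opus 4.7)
The plan is to apply van der Corput's lemma at different orders depending on the size of $\xi$ relative to $t|a|$, exploiting both the cubic nature of the phase and the location of its critical points.

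Write $\phi(l) = tS(l) - \xi l$, so that $\phi'(l) = 12t(l^2+a) - \xi$, $\phi''(l) = 24tl$, and $\phi'''(l) = 24t$. When they exist, the critical points of $\phi$ are at $l = \pm l_0$ with $l_0 = \sqrt{|a| + \xi/(12t)}$; since $a \leq -r^2$, we have $|a| \geq r^2$. I will split the analysis into two regimes.

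In the regime $\xi \geq -6t|a|$, I have $l_0^2 \geq |a|/2 \geq r^2/2$, so the critical points are bounded away from the origin (where $\phi''$ vanishes) and $|\phi''(\pm l_0)| \gtrsim tr$. I will decompose $[k,\infty)$ into bounded neighborhoods of those among $\pm l_0$ lying in $[k,\infty)$ together with their complement, subdividing further at $l = 0$ so that $\phi'$ is monotonic on each piece. On the neighborhoods, van der Corput's lemma of the second kind gives contributions of order $(tr)^{-1/2}$, while on the complement the bound $|\phi'| \gtrsim tr^2$ together with the monotonicity yields contributions of order $(tr^2)^{-1}$ via van der Corput's first-kind lemma. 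Together these give $|H(t;a)| \lesssim_r t^{-1/2}$, which is consistent with the claimed bound.

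In the complementary regime $\xi < -6t|a|$, we have $|\xi| > 6tr^2$, i.e.\ $t \leq |\xi|/(6r^2)$. Here I will apply van der Corput's lemma at the third order: since $|\phi'''(l)| = 24t$ uniformly and $\phi'''$ is trivially monotonic, the lemma yields $|H(t;a)| \lesssim t^{-1/3}$, with the improper integral at $+\infty$ shown to converge by a one-step integration by parts using $\phi'(l) \to +\infty$. Converting via $t \lesssim_r |\xi|$ yields
\[
t^{-1/3} = t^{-1/2} \cdot t^{1/6} \lesssim_r t^{-1/2}(1+|\xi|)^{1/6} \leq t^{-1/2}(1+|\xi|),
\]
which gives the claimed bound in this regime.

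The main technical obstacle will be in the first regime: handling the boundary at $l = k$ when $k$ lies near one of the critical points $\pm l_0$, so that the isolating neighborhoods can still be chosen of length proportional to $r$ without breaking the lower bounds on $|\phi''|$ (for the second-kind estimate) or on $|\phi'|$ (for the first-kind estimate).
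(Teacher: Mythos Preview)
Your proposal is correct and takes a genuinely different route from the paper's argument.

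The paper does not absorb $e^{-i\xi l}$ into the phase. Instead it keeps the phase as $tS(l+k)$ (whose critical points sit at $l_\pm = -k \pm \sqrt{-a}$, independent of $\xi$) and treats $e^{-i\xi l}$ as part of the amplitude. It then localizes with smooth cutoffs $\eta_\pm$ around $l_\pm$ and $\eta_\infty = 1 - \eta_+ - \eta_-$: on the support of $\eta_\infty$ a single integration by parts gives $t^{-1}(1+|\xi|)$, while near each $l_\pm$ the second-order van der Corput lemma applied to $tS$ yields $t^{-1/2}$ times the total variation of the amplitude $\eta_\pm(l)e^{-i\xi l}$, which is where the factor $(1+|\xi|)$ enters. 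No regime split in $\xi$ is needed.

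Your approach instead absorbs $\xi$ into the phase $\phi = tS - \xi l$, so the critical points $\pm l_0$ move with $\xi$. You then split according to whether $l_0$ stays bounded below by $r/\sqrt{2}$ (in which case second- and first-order van der Corput give a clean $t^{-1/2}$ with \emph{no} $\xi$-loss), or the critical points come close to coalescing (your Regime~2), where you fall back on the third-order estimate $t^{-1/3}$ and convert using $t \lesssim_r |\xi|$. This is slightly sharper in Regime~1, and in Regime~2 it actually yields $t^{-1/2}(1+|\xi|)^{1/6}$, better than the stated $(1+|\xi|)$. The price is the two-regime bookkeeping and the need to track the moving $\pm l_0$; the paper's version trades that for a single decomposition with fixed cutoffs at the cost of the uniform $(1+|\xi|)$ factor. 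Both are valid, and the ``obstacle'' you flag at the end (the lower endpoint $k$ landing near a critical point) is harmless, since the second-order van der Corput bound is uniform in the interval endpoints.
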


\begin{proof}
	First, compute
	\begin{align*}
		H(t;a)
			&=	\int_0^\infty e^{-i\xi (l+k)} e^{itS(l+k)} \, dl
	\end{align*}
	and note that the phase function $S(l+k)$ has critical points at 
	$l_\pm = -k \pm r$. Let $\eta \in C_0^\infty(\R)$ with $\eta(s)=1$ for $|s| < r/4$ and $\eta(s)=0$ for $|s|\geq r/2$. Let
	 $$ \eta\pm(l) = \eta(l-l_\pm), \quad \eta_\infty = 1 - \eta_+(l)-\eta_-(l).$$
	 It suffices to estimate 
	 $$ I_*(t) = \int_0^\infty \eta_*(l) e^{-i\xi (l+k)} e^{itS(l+k)} \, dl$$
	 for $*=+,-,\infty$, noting that one or both of $I_\pm$  may identically zero depending on the value of $k$.
	 
	 For $I_\infty$, we have 
	 \begin{align*}
	 	I_\infty &= \frac{1}{12it} 
	 		\int_0^\infty 
	 			\frac{\eta_\infty(l) e^{-i\xi(l+k)}} {(l+k)^2-r^2 } 
	 			\diff_l \left(e^{itS(l+k)} \right)  
	 		\, dl\\
	 		&= \frac{1}{12it} \left( -\frac{\eta_\infty(0) e^{itS(k)}e^{-i\xi k}}{k^2-r^2} \right. \\
	 		&\qquad - 
	 			\left. 
	 				\int_0^\infty e^{itS(l+k)} e^{-i\xi k}
	 				\frac{\diff}{\diff l}
	 					\left( 
	 						\frac{\eta_\infty(l)e^{-i\xi l}}{(l+k)^2-r^2}
	 					\right)
	 				\, dl
	 			\right).
	 \end{align*}
	 The first right-hand term in parentheses is bounded because 
	 $$\eta_\infty(0) = 1 - \eta(k-r) - \eta(k+r)$$ 
	 vanishes for $k$ near $\pm r$, and the second term is bounded by constants times $(1+|\xi|)$ for the same reason. Hence,
	 \begin{equation}
	 	\label{partial.airy.ndg.infty}
	 	|I_\infty(t)| \lesssim_{\,r} t^{-1}(1+|\xi|).
	 \end{equation}
	 
	 Next we show how to estimate $I_+$ since the estimates for $I_-$ are similar. We compute
	 \begin{align*}
	 	I_+(t)	
	 		&=	\int_0^\infty \eta_+(l) e^{-i\xi(l+k)}e^{itS(l+k)} \, dl\\
	 		&=	\int_{k-r}^\infty \eta(s) e^{-i\xi(s+r)}e^{itS(s+r)} \, ds
	 \end{align*}
	 where, setting $\phi = S(s+r)$, we have
	 $$ \phi'(s) = 12((s+r)^2-r^2) $$
	 and 
	 $$ \phi''(s)= 24(s+r).$$
	 As $\eta$ has support in $|s|<r/2$, we have the lower bound
	 $$ \phi''(s) \geq 12r > 0$$
	 and we can conclude from Van der Corput's Lemma that
	 \begin{equation}
	 	\label{partial.airy.ndg.+}	
	 		|I_+(t)| \lesssim_{\, r} t^{-\frac12} (1+|\xi|)
	 \end{equation}
	 where the factor of $(1+|\xi|)$ comes from the derivative of $\eta(s)e^{-i\xi(s+r)}$. Similarly
	 \begin{equation}
	 	\label{partial.airy.ndg.-}
	 		|I_+(t)| \lesssim_{\, r} t^{-\frac12} (1+|\xi|).
	 \end{equation}
	 Estimates \eqref{partial.airy.ndg.infty}, \eqref{partial.airy.ndg.+}, and \eqref{partial.airy.ndg.-} imply \eqref{partial.airy.ndg.est}.
\end{proof}

\begin{lemma}
	\label{lemma:partial.airy.dg}
		Suppose that $|a| \leq c$ for a fixed $c>0$. Then
		$$ |H(t;a)| \lesssim_{c} t^{-\frac13}. $$
\end{lemma}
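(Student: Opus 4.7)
The plan is to reduce the problem to a uniform bound on the incomplete Airy integral and then to establish that bound by combining Van der Corput's third--derivative lemma with an integration by parts tail estimate.

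First, I would apply the rescaling $l = (12t)^{-1/3} s$, which converts
\begin{equation*}
H(t;a) = (12t)^{-1/3} \int_{s_0}^\infty e^{i\Phi(s)}\,ds, \qquad \Phi(s) = \frac{s^3}{3} + zs,
\end{equation*}
where $s_0 = (12t)^{1/3} k$ and $z = (12at - \xi)(12t)^{-1/3}$. Since the prefactor is $\lesssim t^{-1/3}$, it suffices to prove the uniform estimate
\begin{equation*}
\sup_{s_0,\,z\in\R}\left|\int_{s_0}^\infty e^{i\Phi(s)}\,ds\right| < \infty. \qquad (*)
\end{equation*}
Observe that this bound, once established, is actually independent of $a$ (and in particular of the size $c$); the condition $|a|\leq c$ is effectively absorbed into the rescaling.

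For $(*)$, the essential fact is that $\Phi'''(s)\equiv 2$ is bounded below by an absolute constant, even though $\Phi'$ may vanish (at $\pm\sqrt{-z}$ when $z\leq 0$). Set $R = R(z) = \sqrt{|z|+1} + 2$, so that $|\Phi'(s)| = |s^2+z| \geq 2$ for $s\geq R$. I split into two cases. If $s_0 \geq R$, then a single integration by parts gives
\begin{equation*}
\int_{s_0}^\infty e^{i\Phi(s)}\,ds = \left[\frac{e^{i\Phi(s)}}{i\Phi'(s)}\right]_{s_0}^\infty + \int_{s_0}^\infty \frac{2s}{i(s^2+z)^2}\,e^{i\Phi(s)}\,ds,
\end{equation*}
and the boundary term at $s_0$ is bounded by $1/|\Phi'(s_0)|\leq 1/2$, while the remaining integral is dominated by $\int_{s_0}^\infty 2|s|/(s^2+z)^2\,ds \leq 1/2$. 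If instead $s_0 < R$, I split
$\int_{s_0}^\infty = \int_{s_0}^R + \int_R^\infty$ and handle the tail exactly as above. For the finite piece $\int_{s_0}^R$, I invoke Van der Corput's lemma in the third--derivative form: since $|\Phi'''|\geq 2$ throughout, one has $\left|\int_{s_0}^R e^{i\Phi(s)}\,ds\right| \lesssim 2^{-1/3}$ with an implicit constant independent of the length of the interval.

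The main obstacle is treating the sub-case $z<0$ with a critical point $\pm\sqrt{-z}$ lying inside $[s_0, R]$, where neither pure integration by parts nor the second--derivative Van der Corput estimate applies directly (the latter would require a uniform lower bound on $\Phi''$, which fails near $s=0$). This is precisely why the third--derivative version of Van der Corput is essential: it is insensitive both to vanishing of lower derivatives and to the length of the integration interval, so it handles the degenerate stationary point and any portion of the interval uniformly. Combining the two cases yields $(*)$, and hence $|H(t;a)|\leq C(12t)^{-1/3}\lesssim_c t^{-1/3}$.
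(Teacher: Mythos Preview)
Your proof is correct and, after the same rescaling as in the paper, it reaches the same reduction: show that the incomplete Airy integral $\int_{s_0}^\infty e^{i(s^3/3+zs)}\,ds$ is bounded uniformly in $s_0,z\in\R$. The paper proves this reduction by a case analysis on the rescaled parameter $b$ (your $z$): for $b$ bounded away from zero it localises near the critical points $\pm\sqrt{-b}$ with explicit cutoffs, uses a trivial length bound on those pieces, and integrates by parts on the complement; for $|b|$ small it uses a single cutoff near the origin and integrates by parts on the tail using $1/(is^2)$. Your approach avoids this case split entirely by invoking Van der Corput's lemma in the third--derivative form on the finite piece $[s_0,R]$, exploiting $\Phi'''\equiv 2$, and handling the tail $[R,\infty)$ by one clean integration by parts. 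Your route is shorter and more uniform, and it makes transparent your correct observation that the resulting bound is in fact independent of $a$; the paper's argument is more elementary and self--contained in that it does not appeal to Van der Corput, but at the cost of a longer three--case argument with cutoff functions.
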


\begin{proof}
	By scaling we have $H(t;a) = t^{-1/3} I(t;a)$. Here
	\begin{equation}
		\label{partial.airy.ndg.mb}
			I(t;a) = \int_m^\infty e^{i(s^3/3 + bs)} \, ds
	\end{equation}
	where
	$$ b = (12t)^\frac23 (a-\xi/12t), \quad m = (12t)^\frac13 k. $$
	It suffices to show that the right-hand side integral in \eqref{partial.airy.ndg.mb} is bounded uniformly in $m,b \in \R$. 
	
	If $b>c>0$, a straightforward integration by parts shows that
	$$ \left| \int_m^\infty e^{i(s^3/3+bs)} \, ds \right| \lesssim_{c} 1 $$
	uniformly in $m \in \R$. 
	
	If $b<-c<0$, let $b=-r^2$, let $c=q^2$ (here $0<q<r$),  and let $\eta_\pm(s) = \psi(s \mp r)$ where $\psi \in C_0^\infty(\R)$ with $\psi(s)=1$ for $|s|<q/4$ and $\psi(s)=0$ for $|s|\geq q/2$. Let $\eta_\infty = 1- \eta_+ - \eta_-$ and write
	$$ \int_m^\infty e^{i(s^3/3 +bs)} \, ds = I_+ + I_- + I_\infty$$
	where
	$$ I_\pm = \int_m^\infty \eta_\pm(s) e^{i(s^3/3+bs)}\,ds, \qquad I_\infty=\int_m^\infty \eta_\infty(s)e^{i(s^3+bs)}\, ds. 
	$$
	Trivially, $|I_\pm| \leq 4q$. On the other hand, using integration by parts, we have
	$$ 
		I_\infty = \left. \frac{\eta_\infty(s)}{i(s^2+b)} \right|_m^\infty -
			\int_m^\infty e^{i(s^3/3+bs)}\frac{d}{ds} \left( \frac{\eta_\infty(s)}{i(s^2+b)}\right) \, ds.
	$$ 
	The first right-hand term is bounded above in modulus by $(2r-q/2)^{-1}(q/2)^{-1}$ owing to the support of $\eta_\infty$. To bound the second right-hand term, note that 
	$$ \frac{d}{ds}\left( \frac{\eta_\infty(s)}{s^2+b}\right)
		= \frac{\eta_\infty'(s)}{s^2+b} - \eta_\infty(s) \frac{2s}{(s^2+b)^2}.
	$$
	It is easy to see that both terms are absolutely integrable with bounds depending on $r$ (and diverging as $r \downarrow 0$), but uniform for $b<-c$. 
	
	Finally, suppose that $|b| \leq c$ for $c=q^2$ sufficiently small, and let $\mu \in C_0^\infty(\R)$ with $0 \leq \mu(s) \leq 1$ and $\mu(s)=1$ for $|s|\leq 1$. We may write
	\begin{align*}
		\int_m^\infty e^{i(s^3/3 + bs)} \, ds &= I_1 + I_2
		\intertext{where}
		I_1	&=	\int_m^\infty \mu(s) e^{i(s^3/3+bs)} \, ds,
		\intertext{and}
		I_2	&=	\int_m^\infty (1-\mu(s)) e^{I(s^3/3+bs)} \, ds.
	\end{align*}
	It is easy to see that 
	\begin{equation}
		\label{partial.airy.I1.bd}
			|I_1| \leq 2.
	\end{equation}
	We may integrate by parts in  $I_2$, recalling that $1-\mu(s)=0$ for $|s|\leq 1$ and $\mu'(s)$ is supported in $1 \leq |s| \leq 2$:
	\begin{align*}
		I_2	&=	\int_m^\infty e^{ibs} (1-\mu(s)) \frac{1}{is^2} \frac{d}{ds} \left(e^{is^3/3}\right) \, ds\\
			&=	\left. e^{ibs}(1-\mu(s))\frac{1}{is^2} e^{is^3/3} \right|_{m}^{\infty}\\
			&\quad - \int_m^\infty 	e^{is^3/3} \frac{d}{ds}\left( (1-\mu(s))\frac{1}{is^2} e^{ibs}\right) \, ds.
	\end{align*}
	The first term is bounded since $|s|\geq 1$ on the support of $1-\mu$ and the second term is easily seen to be bounded owing to the supports of $1-\mu$ and $\mu'$ together with the fact that $|b|$ is bounded.
\end{proof}

\bibliographystyle{amsplain}
\bibliography{KPI}

\providecommand{\bysame}{\leavevmode\hbox to3em{\hrulefill}\thinspace}
\providecommand{\MR}{\relax\ifhmode\unskip\space\fi MR }
\providecommand{\MRhref}[2]{%
  \href{http://www.ams.org/mathscinet-getitem?mr=#1}{#2}
}
\providecommand{\href}[2]{#2}
\begin{thebibliography}{10}

\bibitem{NIST:DLMF}
\emph{{\emph{ NIST Digital Library of Mathematical Functions}}}, \url{https://dlmf.nist.gov/}, Release 1.1.11 of 2023-09-15, F.~W.~J. Olver, A.~B. {Olde Daalhuis}, D.~W. Lozier, B.~I. Schneider, R.~F. Boisvert, C.~W. Clark, B.~R. Miller, B.~V. Saunders, H.~S. Cohl, and M.~A. McClain, eds.

\bibitem{FA83-1}
A.~S. Fokas and M.~J. Ablowitz, \emph{On the inverse scattering of the time-dependent {S}chr\"{o}dinger equation and the associated {K}adomtsev-{P}etviashvili equation}, Stud. Appl. Math. \textbf{69} (1983), no.~3, 211--228. \MR{721131}

\bibitem{FS92}
A.~S. Fokas and L.-Y. Sung, \emph{On the solvability of the {$N$}-wave, {D}avey-{S}tewartson and {K}adomtsev-{P}etviashvili equations}, Inverse Problems \textbf{8} (1992), no.~5, 673--708. \MR{1185594}

\bibitem{FS99}
A.~S. Fokas and Li-Yeng Sung, \emph{The {C}auchy problem for the {K}adomtsev-{P}etviashvili-{I} equation without the zero mass constraint}, Math. Proc. Cambridge Philos. Soc. \textbf{125} (1999), no.~1, 113--138. \MR{1645537}

\bibitem{HID17}
Benjamin Harrop-Griffiths, Mihaela Ifrim, and Daniel Tataru, \emph{The lifespan of small data solutions to the {KP}-{I}}, Int. Math. Res. Not. IMRN (2017), no.~1, 1--28. \MR{3632096}

\bibitem{HN14}
Nakao Hayashi and Pavel~I. Naumkin, \emph{Large time asymptotics for the {K}adomtsev-{P}etviashvili equation}, Comm. Math. Phys. \textbf{332} (2014), no.~2, 505--533. \MR{3257655}

\bibitem{HNS99}
Nakao Hayashi, Pavel~I. Naumkin, and Jean-Claude Saut, \emph{Asymptotics for large time of global solutions to the generalized {K}adomtsev-{P}etviashvili equation}, Comm. Math. Phys. \textbf{201} (1999), no.~3, 577--590. \MR{1685890}

\bibitem{IKT2008}
A.~D. Ionescu, C.~E. Kenig, and D.~Tataru, \emph{Global well-posedness of the {KP}-{I} initial-value problem in the energy space}, Invent. Math. \textbf{173} (2008), no.~2, 265--304. \MR{2415308}

\bibitem{Kiselev01}
O.~M. Kiselev, \emph{Asymptotics of a solution of the {K}adomtsev-{P}etviashvili-2 equation}, Proc. Steklov Inst. Math. (2001), S107--S139. \MR{2066715}

\bibitem{Kiselev04}
\bysame, \emph{Asymptotics of solutions of multidimensional integrable equations and their perturbations}, Sovrem. Mat. Fundam. Napravl. \textbf{11} (2004), 3--149. \MR{2120870}

\bibitem{KS2021}
Christian Klein and Jean-Claude Saut, \emph{Nonlinear dispersive equations: Inverse scattering and {PDE} methods}, Applied Mathematical Sciences, vol. 209, Springer-Verlag, Berlin-New York, 2021.

\bibitem{Manakov81}
S.~V. Manakov, \emph{The inverse scattering transform for the time-dependent {S}chr{\"{o}}dinger equation and {K}adomtsev-{P}etviashvili equation}, Physica D \textbf{3} (1981), 420--427.

\bibitem{MST79}
S.~V. Manakov, P.~M. Santini, and L.~A. Takhtajan, \emph{Asymptotic behavior of the solutions of the {K}adomtsev-{P}yatviashvili equation (two-dimensional {K}orteweg\thinspace -\thinspace de {V}ries equation)}, Phys. Lett. A \textbf{75} (1979/80), no.~6, 451--454. \MR{593107}

\bibitem{MMPP24}
Argenis~J. Mendez, Claudio Mu\~noz, Felipe Poblete, and Juan~C. Pozo, \emph{Long time asymptotics of large data in the {K}adomtsev-{P}etviashvili models}, Nonlinearity \textbf{37} (2024), no.~5, Paper No. 055017, 52. \MR{4736012}

\bibitem{Segur82}
Harvey Segur, \emph{Comments on inverse scattering for the {K}adomtsev-{P}etviashvili equation}, Mathematical methods in hydrodynamics and integrability in dynamical systems ({L}a {J}olla, {C}alif., 1981), AIP Conf. Proc., vol.~88, Amer. Inst. Physics, New York, 1982, pp.~211--228. \MR{695517}

\bibitem{Sung99}
Li-Yeng Sung, \emph{Square integrability and uniqueness of the solutions of the {K}adomtsev-{P}etviashvili-{I} equation}, Math. Phys. Anal. Geom. \textbf{2} (1999), no.~1, 1--24. \MR{1706500}

\bibitem{ZS:1974}
V.~E. Zaharov and A.~B. \v{S}abat, \emph{A plan for integrating the nonlinear equations of mathematical physics by the method of the inverse scattering problem. {I}}, Funkcional. Anal. i Prilo\v{z}en. \textbf{8} (1974), no.~3, 43--53. \MR{481668}

\bibitem{ZM1979}
V.~E. Zakharov and S.~V. Manakov, \emph{Soliton theory}, Sov. Sci. Rev. Sect. A: Physics reviews \textbf{1} (1979), 133--190.

\bibitem{Zhou90}
Xin Zhou, \emph{Inverse scattering transform for the time dependent {S}chr\"{o}dinger equation with applications to the {KPI} equation}, Comm. Math. Phys. \textbf{128} (1990), no.~3, 551--564. \MR{1045884}

\end{thebibliography}

\end{document}